\DeclareSymbolFontAlphabet{\mathbbm}{bbold}
\DeclareSymbolFontAlphabet{\mathbb}{AMSb}%
\DeclareRobustCommand{\SkipTocEntry}[5]{}
\newtheorem*{rep@theorem}{\rep@title}
\newcommand{\newreptheorem}[2]{%
	\newenvironment{rep#1}[1]{%
		\def\rep@title{#2 \ref{##1}}%
		\begin{rep@theorem}}%
		{\end{rep@theorem}}}
\theoremstyle{plain}
\newtheorem{theorem}{Theorem}[section]
\newtheorem{lemma}[theorem]{Lemma}
\newtheorem*{lemma*}{Lemma}
\newtheorem{proposition}[theorem]{Proposition}
\newtheorem*{conjecture*}{Conjecture}
\newtheorem{assumption}[theorem]{Assumption}
\theoremstyle{definition}
\newtheorem{definition}[theorem]{Definition}
\theoremstyle{remark}
\newtheorem{remark}[theorem]{Remark}
\DeclareMathOperator{\Mom}{M}
\mathchardef\mhyphen="2D
\def\bq{\begin{eqnarray}}
	\def\eq{\end{eqnarray}}
\def\bqq{\begin{eqnarray*}}
	\def\eqq{\end{eqnarray*}}
\def\epsilon{\varepsilon}
\newcommand\1{{\ensuremath {\mathds 1} }}
\newcommand{\bsnorm}[2][]{%
	\ifthenelse{\isempty{#1}}%
	{{\ensuremath{|\! |\! |  #2 |\! |\! |_{\beta,s}}}}%
	{{\ensuremath{|\! |\! |  #2 |\! |\! |_{\beta,s}^#1}}}%
}
\def\bN{\mathbb{N}}
\def\bR{\mathbb{R}}
\def\cC{\mathcal{C}}
\def\cF{\mathcal{F}}
\DeclareMathAlphabet{\mathup}{OT1}{\familydefault}{m}{n}
\newcommand{\dx}[1]{\mathop{}\!\mathup{d} #1}
\def\d{\dx}
\DeclarePairedDelimiter{\bra}{(}{)}
\DeclarePairedDelimiter{\set}{\{}{\}}
\newcommand{\customlabel}[2]{%
   \protected@write \@auxout {}{\string \newlabel {#1}{{#2}{\thepage}{#2}{#1}{}} }%
   \hypertarget{#1}{}
}
\numberwithin{figure}{section}
\numberwithin{equation}{section}
\title[Existence and gelation for CGEDG]{Existence and Non-existence for Continuous Generalized Exchange-Driven Growth Model}
\author{Chun Yin Lam \and André Schlichting}
\email{\{chun.lam,andre.schlichting\}@uni-ulm.de}
\address{Institute for Applied Analysis, University of Ulm, Germany}
\thanks{\emph{Acknowledgement:} The authors thank Prasanta K. Barik for discussions on the generalized EDG model and the contents of~\cite{barik2020mass,barik2024discrete,barik2025continuous}.
The authors thank the anonymous referee for their continuous and detailed reading, whose comments both improved the presentation and corrected inaccuracies in a previous version of the instantaneous gelation result, Theorem~\ref{thm:instant-gel}, leading to the new necessary condition~\eqref{eq:Hcond}.\\[0.5\baselineskip]
\emph{Declaration on the use of AI:}
The proof of the instantaneous gelation Theorem~\ref{thm:instant-gel}, together with the second--difference estimate in Lemma~\ref{lem:secdiff} and Remark~\ref{rmk:Hnecessary} on the necessity of the assumption~\eqref{eq:Hcond} was obtained with the assistance of the large language model Claude (Opus~4.8). The authors verified and revised all statements and proofs and take full responsibility for the contents of this work.
	\\[0.5\baselineskip]
\emph{Funding:} This work is supported by the Deutsche Forschungsgemeinschaft (DFG, German Research Foundation) under Germany's Excellence Strategy EXC 2044 --390685587, Mathematics M\"unster: Dynamics--Geometry--Structure.
The research of AS is partially based upon work from COST Action 24122 mSPACE, supported by COST (European Cooperation in Science and Technology), www.cost.eu.}
\begin{document}
\begin{abstract}
The continuous generalized exchange-driven growth model (CGEDG) is a co\-ag\-u\-la\-tion-fragmentation equation that describes the evolution of the macroscopic cluster size distribution induced by a microscopic dynamic of binary exchanges of masses between clusters. It models droplet formation, migration dynamics, and asset exchanges in various scientific and socio-economic contexts. It can also be viewed as a generalization of the continuous Smoluchowski equations. In this work, we show the existence and uniqueness of solutions for kernels with superlinear growth at infinity and singularity at the origin and show the non-existence of solutions for kernels with sufficiently rapid growth. The latter result is shown via the finite-time gelation and instantaneous gelation in the sense of moment blow-up.
\end{abstract}
\maketitle

\section{Introduction}
The continuous generalized exchange-driven growth model~\eqref{eq:CGEDG} introduced in~\cite{barik2025continuous,lam2025convergence} is a system of integral-differential equations that describes the dynamics of the distribution of cluster masses in a closed system, where masses are exchanged between clusters. We say $c\in C^1([0,T], L^1(\bR_+))$ with $\bR_+:=[0,\infty)$ satisfies the strong form provided that
\begin{equation*}\tag{CGEDG}\label{eq:CGEDG}\begin{split}
\partial_t c (a)& = \int_0^a  \int_z^\infty K(x,a-z,z) c(x)c(a-z)   \d x   \d z  \\
&\quad -
\int_0^a  \int_0^\infty   K(a,x,z) c(a) c(x)  \d x \d z \\
&\quad
- \int_0^\infty \int_z^\infty K(x,a,z)c(x) c(a)  \d x \d z \\
&\quad
+ \int_0^\infty \int_0^\infty K(a+z,x,z) c(x) c(a+z)  \d x  \d z,  \text{ for } a \ge 0,
 \end{split}\end{equation*}
where the kernel $\bR_+^3  \ni (x,y,z) \mapsto K(x,y,z)\ge 0$ is measurable and the time variable is implicit.

By adopting the notation from chemical reaction networks, the system~\eqref{eq:CGEDG} can be seen as the rate equation for the masses $x,y,z\geq 0$ according to the reaction system
\begin{equation*}
	\set*{x} + \set*{y+z} \xrightleftharpoons[K(x+z,y,z)]{K(x,y+z,z)}
	\set*{x+z} + \set*{y} \,.
\end{equation*}
Here, a cluster of mass $y+z$ exchanges a mass $z$ with a cluster of mass $x$ and the corresponding rate is given by $K(x,y+z,z)$.

The model is also derived as a mean-field limit for a stochastic interacting particle system under an appropriate scaling: Two clusters of discrete particles can exchange an arbitrary number of particles between them with the rate dependent on the masses of the donor and the recipient, as well as the mass being exchanged~\cite{lam2025convergence} (see also~\cite{GrosskinskyJatuviriyapornchai2019,EDG-convergence} for the derivation in the setting of EDG).
In this sense, the system~\eqref{eq:CGEDG} describes the macroscopic dynamics of the distribution of cluster masses with reaction rates prescribed by $K$.

The (non-generalized) exchange-driven growth model (EDG) was first studied in \cite{BenNaimKrapivsky2003} to model physical growth processes with applications in the formation of polymers and droplet formation.
In contrast to EDG, where only a unit mass is exchanged in a reaction, the generalized model might be more suitable for situations with more complicated dynamics, and the restriction of countable sizes is not applicable, for example, in settings of droplet growth and asset exchange~\cite{IspolatovKrapivskyRedner1998}.

The mathematical study of EDG began in  \cite{Esenturk2018}, where fundamental results of well-posedness, local existence and gelation results were discussed. The refinement of the previous results with fast-growing kernels was done in the recent work \cite{si2024existence}.
In~\cite{Schlichting2020,EsenturkValazquez2021}, its long-time behavior was investigated and in~\cite{EichenbergSchlichting2021}, dynamical self-similar solutions for product kernels were investigated.
A first discrete generalization was introduced in~\cite{barik2024discrete}.
In \cite{barik2025continuous,lam2025convergence}, the well-posedness of the generalized model for at most linear growth kernel was derived.

The integral equation~\eqref{eq:CGEDG} is closely related to the continuous Smoluchowski coagulation equation  \cite{Smoluchowski1916}.
The Smoluchowski coagulation and its gelation phenomenon are very well studied using deterministic \cite{banasiak2019analytic,EscobedoMischlerPerthame2002,Fournier2025-iu} and stochastic methods \cite{aldous1999deterministic,jeon1998existence,FournierLaurencot2009} for a large class of kernels, see also~\cite{andreis2023gelation} for generalizations.
The parallel between them can be seen readily from the weak form of the equation~\eqref{eq:swCGEDG} as well as from the stochastic models \cite{norris2000cluster,norris1999smoluchowski, lam2025convergence}.
Moreover, the possibility for gelation is granted by the quadratic structure of the dynamic in the solution.
However, the specific algebraic structure on the test function is different.
Due to the differences in the operator on the test function, the exchange gradient structure requires a different set of algebraic inequalities compared to the Smoluchowski coagulation equation. On the other hand, while both CGEDG and Smoluchowski coagulation-fragmentation equations contain fragmentation terms, the fragmentation in CGEDG is again quadratic in the solution but it is linear for the Smoluchowski coagulation-fragmentation equation.

Furthermore, CGEDG can be viewed as a generalization of the scalar Boltzmann equation \cite{hoare1984quadratic} in which the kernel is symmetric. It is the mean-field equation of the stochastic exchange model, which has applications in modeling heat conduction in materials. The case of bounded kernels was studied in \cite{FERLAND1994, Giroux2008}, and more recently, a class of kernels with at most linear growth in the first two components was studied in \cite{carlen2025spectral}.

The contributions of this work lie in the well-posedness and the gelation phenomenon for CGEDG for a class of symmetric kernel $K$ in the first two components with superlinear growth.
In particular, the well-posedness results improve previous ones in~\cite{barik2025continuous,lam2025convergence} by allowing faster-growing symmetric kernels with singularity at zero.
Finally, the results on gelation encompass finite-time as well as instantaneous gelation, which is detected by the blow-up of the second moment.

\newpage

\subsection{Settings}
\begin{definition}[Weighted Lebesgue spaces]
\[Y_{-\beta,r}  := \{ c \in L^1(\bR_+) : \|c\|_{L^1_{-\beta,r}}:=\|c\|_{-\beta,r}:= \int_0^\infty (x^{-\beta}+x^r) |c(x)| \d x <+\infty  \}  \]
and $Y_{-\beta,r}^+ $ positive cone of $Y_{-\beta,r}$, $r\ge 0$, $\beta \ge 0$.
Moreover, we define
\begin{equation}\label{eq:def:Yinfty}
	Y_{-\beta,\infty} := \bigcap_{n\in \bN} Y_{-\beta,n}
	\quad\text{and}\quad
	Y_{-\beta,\infty}^+ := \bigcap_{n\in \bN} Y_{-\beta,n}^+ \,.
\end{equation}

Similarly,
\[Y_{r}  := \{ c \in L^1(\bR_+) : \|c\|_{L^1_{r}}:=\|c\|_{r}:= \int_0^\infty x^r |c(x)| \d x <+\infty  \}  \] for $r\in\bR$.
\end{definition}
\begin{assumption}[Global assumptions]\label{ass:global}
	Assume
\begin{enumerate}[(i)]\item $K\ge0 $ is symmetric in the first two coordinates, namely, $K(x,y,\cdot)= K(y,x, \cdot)$.
\item $K(x,y,z)=0$ if $z>x$.
\end{enumerate}
\end{assumption}
\begin{remark}
It is a consequence of the symmetry from Assumption \ref{ass:global} that $K(x,y,z)=0$ if $z > y$.
\end{remark}

In the following statements, we will always assume Assumption \ref{ass:global} without explicitly stating it. We state the assumptions for the existence results.
\begin{assumption}[Global existence]\label{ass:existence}
Let $\mu, \nu \in [0,2], \mu+\nu \le 3$ and $\lambda:= \max(\mu,\nu) >1$, $\alpha\ge 0$. Assume
\begin{equation}\label{eq:generalbdd} K(x,y,z) \le \hat{x}^{-\alpha}\hat{y}^{-\alpha} 2^{-1}(\check{x}^\mu  \check{ y}^\nu+ \check{x}^\nu   \check{y}^\mu )  \varphi(z)\end{equation}
 with $\hat{x}=1\wedge x$, $\check{x} = 1 \vee x$,
 $ \varphi \in Y_{-2\alpha,2\lambda}^+$.
For $x \ge 0$, the second derivative satisfies
\begin{equation}\label{ass:2.derivative,K}\partial^2_{1} K(\cdot ,y,z)(x) \le \hat{y}^{-\alpha}\check{y}^\lambda\varphi(z)\end{equation} and
 if $\alpha >0$, there exists a constant $C_\alpha>0$ such that
 \begin{equation}\label{eq:alpha bound K}\widehat{(x-z)}^{-\alpha}K(x,y,z)
 \le C_{\alpha} \hat{x}^{-2\alpha}  \hat{y}^{-\alpha}  \check{x}^{\lambda} \check{y}^{\lambda}   \varphi(z)  \qquad \text{ for } x\ge z  \ge 0 \,. \end{equation}

\end{assumption}
\begin{remark}\label{rmk:example}
\begin{enumerate}
\item Suppose the assumption \eqref{eq:generalbdd} holds, then such a kernel $K$ satisfies~\eqref{eq:alpha bound K}, provided that there exists $\Omega\in(0,1)$ such that
\begin{equation}\label{ass:truncation}\hat{x}^{\alpha} K(x,y,z)\le  \biggl(1-\frac{z}{x} \biggr)^{\alpha}  \hat{y}^{-\alpha} \check{x}^{\lambda}\check{y}^{\lambda} \varphi(z) \quad \text{ for } x-z \le 1 \text{ and } 1-\frac{z}{x}\le \Omega \,, \end{equation}
with $C_{\alpha}:=\Omega^{-\alpha}$.  The justification is given in Proposition \ref{prop: example}.
We observe that both the condition~\eqref{ass:truncation} with $\Omega <1/2$ as well as~\eqref{eq:generalbdd} allow the kernel to have a singularity near zero. Indeed, we can take $\varphi(z)=z^{2\alpha-1+\epsilon}$ near zero so that  $\varphi\in Y_{-2\alpha,2\lambda}(\bR_+)$ with $\epsilon > 0$. Then for $ z = x/2=y/2$, we have $K(x,x/2,x/2)\le 2^{-2\alpha+1-\epsilon} x^{  -1+\epsilon }$.
\item Upon closer examination of the proof, the assumptions above can be slightly relaxed to kernels given as a family of measures $(K(x,y, \d z))_{x \ge 0, y\ge 0}$ with sufficient integrability in $z$ uniformly in $x,y$ derived from \eqref{eq:generalbdd},  \eqref{ass:2.derivative,K} and \eqref{eq:alpha bound K}.  In this case, the second differentiability can be replaced by a bound on the discrete Laplacian $\bigl(\Delta_z (K( \cdot,y,\d z ))\bigr)(x)$.  Then the solution will remain in $L^1$ if the initial data is in $L^1$. This is not surprising because the continuous Smoluchowski coagulation equation in $L^1$ could be interpreted as having an appropriate delta measure in the $z$ component.

\end{enumerate}
\end{remark}
The possible singularity of the kernel at zero and growth at infinity requires a solution space with suitable weighted moments, which are adapted to the kernel.
\begin{definition}[Weak continuity]
A map $[0,T) \ni t\mapsto c_t \in Y_{-\beta, r}^+$ is (weakly) continuous provided that  the map
\[ t\mapsto \int_0^\infty (x^{-\beta}+x^r) f(x) c_t( x) \d x\] is continuous for all $f \in L^\infty(\bR_+)$. It is denoted by $c \in C([0,T),  w\mhyphen Y_{-\beta, r})$.
\end{definition}
With this, the definition of weak solutions to~\eqref{eq:CGEDG} is given as follows.
\begin{definition}\label{def:weaksolution}
Let $T\in (0,\infty]$ and  $c_0\in Y_{-\beta,r}^+$. A weak solution $c$ with initial data $c_0$ is a function $c :[0,T) \to Y_{-\beta, r}^+$ such that
\begin{enumerate}[(a)]
\item $c\in C([0,T), Y_0 )\cap L^{\infty}([0,T), Y_{-\beta,r})$,
\item for all $t\in [0,T)$,
\[\int_0^t \d s \int_0^\infty \d z \int_{0}^\infty \d x \int_0^\infty \d y  \, \kappa[c_s](x,y,z)<+\infty. \]
\item for all $t\in[0,T)$, it holds for all $f\in L^\infty(\bR_+)$  \begin{equation}\label{eq:swCGEDG}\int_0^\infty f(x) [c_t( x) - c_0( x) ]\d x  = \int_0^t \d s \iiint \d z \d x \d y  \, (\Delta_z f)(x) \kappa[c_s](x,y,z) ,
\end{equation} over the integration domain $D:=\{(x,y,z)\in \bR_+^3: x\ge z, y\ge z\}$,
where the discrete Laplacian is given by \begin{equation}\label{eq:discretelaplace}(\Delta_z f)(x):=f(x+z)-2f(x)+f(x-z),\end{equation} and
\[\kappa[c_s](x,y,z):=K(x,y, z) c_s(x) c_s(y).\]
\end{enumerate}
\end{definition}
\begin{remark}
\begin{enumerate}[(i)]
\item  The symmetry and zero extension of $K$ in Assumption~\ref{ass:global} allows to rewrite the strong form~\eqref{eq:CGEDG} as the weak form~\eqref{eq:swCGEDG} by observing that
\begin{equation}\label{eq:wCGEDG}\int_0^\infty f(x) [c_t( x) - c_0( x) ]\d x  = \int_0^t \d s \int_0^\infty \d z \int_z^\infty \d x \int_0^\infty \d y  \, f \cdot \gamma^{x,y,z}  \kappa[c_s](x,y,z) \,,
\end{equation}
where we use the notation
$f\cdot \gamma^{x,y,z}= -f(x)+f(x-z)-f(y) + f(y+z)$.
\item Since for $f_0(x)\equiv 1$, we have that $(\Delta_z f_0)(x) = 0$, the zero moment is preserved along the evolution. Likewise,  for $f_1(x)=x$, we have $(\Delta_z f_1)(x)=0$, however $f_1\notin L^\infty(\bR_+)$ is not admissible in~\eqref{eq:swCGEDG}.
Hence, the first moment is only formally conserved, which is made rigorous under suitable assumptions for the constructed solutions.
\end{enumerate}
\end{remark}
\subsection{Main results}
The main results are well-posedness for kernels with a singularity at zero and a type of gelation results for~\eqref{eq:CGEDG}.
\begin{theorem}[Global Existence]\label{thm:existence} Suppose $K$ satisfies Assumption \ref{ass:existence}. Let $c_0\in Y_{-\alpha, \lambda}^+$; then~\eqref{eq:CGEDG} has a weak solution $c$ in the sense of Definition \ref{def:weaksolution} with $(r,\beta)= (\lambda, \alpha)$.
\end{theorem}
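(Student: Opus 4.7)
The plan is a standard regularization--compactness scheme. First, I would truncate the kernel by $K_n(x,y,z):=K(x,y,z)\1_{\{1/n\le x,y\le n\}}\1_{\{z\ge 1/n\}}$ and the initial data by $c_0^n:=c_0\1_{\{1/n\le x\le n\}}$. For each $n$, the corresponding bilinear collision operator is Lipschitz on $L^1(\bR_+)$, so a non-negative solution $c^n\in C^1([0,\infty);L^1(\bR_+))$ exists and is unique by Picard iteration. The structure of~\eqref{eq:swCGEDG} with $f\equiv 1$ (so $\Delta_z f\equiv 0$) gives conservation of the zero moment at the approximate level.

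The heart of the argument is to derive uniform-in-$n$ bounds for $\sup_{t\in[0,T]}\|c_t^n\|_{Y_{-\alpha,\lambda}}$. Inserting bounded cutoffs of $f(x)=x^\lambda$ and $f(x)=x^{-\alpha}$ into~\eqref{eq:swCGEDG}, and using the second-order bound $|\Delta_z f(x)|\le C z^2 \sup_{|u-x|\le z}|f''(u)|$ together with~\eqref{eq:generalbdd} and the integrability $\int_0^\infty z^2\varphi(z)\dx{z}<\infty$ (which follows from $\varphi\in Y_{-2\alpha,2\lambda}^+$ after splitting at $z=1$, since $2\lambda\ge 2$ and $2\alpha\ge 0$), I would derive a Gronwall-type differential inequality closing the $\lambda$-moment; for the $-\alpha$-moment, the potentially singular contribution $(x-z)^{-\alpha}$ in $\Delta_z f(x)$ is absorbed precisely by hypothesis~\eqref{eq:alpha bound K}. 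The resulting uniform weighted-moment control provides Dunford--Pettis equi-integrability along with tightness at $0$ and $\infty$; combined with $t$-equicontinuity inferred from~\eqref{eq:swCGEDG} tested against $L^\infty$-functions, this yields a subsequential limit $c\in C([0,T];Y_0)\cap L^\infty([0,T];Y_{-\alpha,\lambda})$. Passage to the limit in the bilinear nonlinearity uses Vitali's theorem with the uniform moment bounds providing the required domination, and a diagonal argument in $T\to\infty$ delivers the global solution; the integrability condition (b) of Definition~\ref{def:weaksolution} at the limit is inherited from the same a priori estimates.

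The main obstacle will be closing the $\lambda$-moment estimate when $\lambda$ is close to $2$ and $\mu+\nu$ close to $3$, where the formal growth factor $\check{x}^\lambda\check{y}^\lambda$ makes naive Gronwall fail: a double-moment control $M_\lambda^2$ is not available. The role of hypothesis~\eqref{ass:2.derivative,K} is precisely to handle this regime: combined with the second-order structure of $\Delta_z$, a Taylor-expansion argument converts the $z^2$ factor into a derivative acting on $K$, and the bound on $\partial_1^2 K$ keeps the resulting contribution linear in $M_\lambda$ (times a manageable weighted $L^1$-norm in $y$ and a $\varphi$-integral in $z$), so Gronwall closes. A secondary technical issue is joint preservation of the $-\alpha$-weight in the weak-$L^1$ limit, which requires tightness near the origin uniformly in $n$; this is exactly provided by the a priori $-\alpha$-moment bound, and Fatou's lemma passes the estimate through.
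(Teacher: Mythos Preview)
Your overall scheme (truncate, get a~priori bounds, extract a weak-$L^1$ limit, pass to the limit) matches the paper, but there are two genuine gaps where the plan would not go through as written.

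\medskip
\textbf{Uniform integrability does not follow from moment bounds.} The claim that ``uniform weighted-moment control provides Dunford--Pettis equi-integrability'' is incorrect: bounds of the form $\sup_n\int w(x)\,c^n_t(x)\,\dx x<\infty$ yield \emph{tightness} (no mass escapes to $0$ or $\infty$) but say nothing about $\sup_n\int_{\{c^n_t>M\}}c^n_t\,\dx x$. The sequence $n\1_{[0,1/n]}$ has all moments bounded yet is not uniformly integrable. The paper closes this gap with a de~la~Vall\'ee--Poussin argument: one picks $\sigma\in\cC_{VP}$ with $\int\sigma(\hat x^{-\alpha}c_0)<\infty$, tests the truncated equation with $\sigma'(\hat x^{-\alpha}c^n_t(x))\hat x^{-\alpha}$, and uses the convexity inequality $u\,\Delta_z(\sigma'(u))\le\Delta_z(\vartheta(u))$ with $\vartheta(r)=r\sigma'(r)-\sigma(r)$ to move the discrete Laplacian onto $K_n$ by change of variables. \emph{This} is where the hypothesis~\eqref{ass:2.derivative,K} on $\partial_1^2K$ is actually used: it bounds $\Delta_z K_n(\cdot,y,z)$ by $z^2\hat y^{-\alpha}\check y^\lambda\varphi(z)$, giving a linear Gronwall for $\int\sigma(\hat x^{-\alpha}c^n_t)$ with coefficient $\Mom_{-\alpha,\lambda}(c^n_t)$, which is already controlled. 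A second de~la~Vall\'ee--Poussin function $\sigma_\lambda$ is used for tightness via propagation of $\int x^{\lambda-1}\sigma_\lambda(x)c^n_t(x)\,\dx x$, which is strictly stronger than the $\lambda$-moment and is needed to upgrade weak-$L^1$ convergence to weak-$Y_{-\alpha,\lambda}$ convergence.

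\medskip
\textbf{The $\lambda$-moment does not close via $\partial_1^2K$.} You identify correctly that when $\mu+\nu$ is near $3$ a naive Gronwall on $\Mom_\lambda$ fails because the bound is quadratic. But your proposed fix---moving $\Delta_z$ from $p_\lambda$ to $K$ via~\eqref{ass:2.derivative,K}---would produce $\int p_\lambda(x)c^n(x)\cdot z^2\hat y^{-\alpha}\check y^\lambda\varphi(z)\cdot c^n(y)$, which is still quadratic in $\Mom_{-\alpha,\lambda}$ and yields only local-in-time bounds. The paper instead keeps $\Delta_z p_\lambda$ on the test side, bounds it by $Cz^2x^{\lambda-2}$ (for $z\le x/2$; the case $z\ge x/2$ is handled by $(3z)^\lambda$), and then exploits the structural constraint $\mu+\nu\le3$ through a H\"older/interpolation inequality (as in~\cite[Lemma~3.2]{si2024existence}) to estimate $\int_1^n\!\int_1^n x^{\lambda-2}(x^\mu y^\nu+x^\nu y^\mu)c^n(x)c^n(y)$ \emph{linearly} in $\Mom_\lambda$, with the second factor controlled by the conserved $\Mom_1$. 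This is what makes the bound global rather than local, and it is independent of~\eqref{ass:2.derivative,K}.
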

\begin{remark}
The proof of existence is based on an argument for $L^1$ compactness for a suitable truncated system with ideas and methods from related works for the exchange-driven growth and the Smoluchowski coagulation equation.
We are able to derive the required estimates for~\eqref{eq:CGEDG} under suitable assumptions on the kernel to apply an Arzel\'{a}-Ascoli argument to obtain a subsequent limit.
The limit is then shown to solve~\eqref{eq:CGEDG} in the weak sense.
With the structure of discrete Laplacian \eqref{eq:discretelaplace}, we can adapt the methods \cite{si2024existence} applied to the exchange-driven growth model and translate the techniques to its continuous variant.
Together, we are able to show the well-posedness for kernel growth at infinity up to degree $3$ in the sum of the powers of $x,y$, given sufficient decay in the $z$ component in the kernel.
In addition, for the singularity near zero, we take inspiration from the existence results for the Smoluchowski coagulation equation with singular kernel \cite{barik2020mass}.
Similar to the works~\cite{barik2020mass}, we use a by-now standard argument to first show a compactness in a weak $L^1$ topology and then improve the convergence with a suitable moment estimate.
\end{remark}
	The next result is uniqueness for the constructed weak solutions to~\eqref{eq:CGEDG} by an adaptation of \cite[Theorem 6]{Esenturk2018} in the setting of the exchange-driven growth model. The proof is independent of the existence proof based solely on the weak formulation from Definition~\ref{def:weaksolution}.  We point out that the setting of the uniqueness theorem is more general than our existence result in Theorem~\ref{thm:existence}, as such the existence of the solution on the larger  $Y_{-2\alpha,2\lambda}^+$ space is not proved.

\begin{theorem}[Uniqueness]\label{thm:uniquness} Let $c_0 \in Y_{-2\alpha,2\lambda}^+$, $\varphi \in Y_{-\alpha,\lambda}^+$, $K(x,y,z)  \le \check{x}^{-\alpha}\check{y}^{-\alpha} \check{x}^\lambda  \check{y}^\lambda \varphi (z)$, $\lambda \in [1,2]$ and if $\alpha > 0$ assume in addition \eqref{eq:alpha bound K} holds, then the weak solution to~\eqref{eq:CGEDG} on $[0,T)$, $T \in (0,\infty]$,  is unique in $Y_{-2\alpha,2\lambda}^+$.
\end{theorem}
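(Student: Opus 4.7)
Let $c^1, c^2$ be two weak solutions on $[0,T)$ sharing the initial datum $c_0$, set $d_t := c^1_t - c^2_t$, fix the weight $W(x) := x^{-\alpha} + x^\lambda$, and define
\[ D(t) := \int_0^\infty W(x)\, |d_t(x)|\, dx. \]
Since $\lambda \ge 1$, one has $W \ge 1$, so $D$ dominates $\|d_t\|_{L^1}$ and vanishing of $D$ forces $c^1 = c^2$. The target is the Gronwall inequality $D(t) \le C \int_0^t D(s)\, ds$, with $C$ depending only on $\sup_{s \in [0,T)} \|c^i_s\|_{Y_{-2\alpha, 2\lambda}}$ and on the moments of $\varphi$; this forces $D \equiv 0$.

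As a preliminary, I extend the weak formulation \eqref{eq:wCGEDG} from bounded test functions to all measurable $f$ with $|f(x)| \le C W(x)$, via truncation $f_R(x) := f(x) \mathbf{1}_{[1/R,\, R]}(x)$ and dominated convergence: the kernel bound $K \le \check{x}^{-\alpha} \check{y}^{-\alpha} \check{x}^\lambda \check{y}^\lambda \varphi(z)$ together with $c^i \in Y_{-2\alpha, 2\lambda}^+$ and $\varphi \in Y_{-\alpha, \lambda}^+ \subset L^1(\bR_+)$ produces an integrable majorant, while assumption \eqref{eq:alpha bound K} handles the potentially singular $f(x-z)$ contribution near $x = z$ when $\alpha > 0$. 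For each fixed $t \in [0,T)$, I take the admissible test function $f_t(x) := W(x)\, \sign(d_t(x))$. Writing \eqref{eq:wCGEDG} for $c^1$ and $c^2$, subtracting (so that the initial-data term cancels), and applying the bilinear identity $c^1(x) c^1(y) - c^2(x) c^2(y) = c^1(x) d(y) + d(x) c^2(y)$ yields
\[
D(t) = \int_0^t ds \int_0^\infty dz \int_z^\infty dx \int_0^\infty dy\; (f_t \cdot \gamma^{x,y,z})\, K(x,y,z)\bigl[c^1_s(x) d_s(y) + d_s(x) c^2_s(y)\bigr].
\]

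The main obstacle is to dominate the right-hand side by $C \int_0^t D(s)\, ds$. Applying the trivial bound
\[\lvert f_t \cdot \gamma^{x,y,z}\rvert \le W(x) + W(x-z) + W(y) + W(y+z)\]
reduces the task to four structurally similar triple integrals. The $W(x)$ and $W(y)$ contributions are controlled through $\check{x}^{-\alpha} \check{x}^\lambda \le 1 + x^\lambda$, which gives $W(x)\, \check{x}^{-\alpha}\check{x}^\lambda \le C(1 + x^{-2\alpha} + x^{2\lambda})$ (and analogously in $y$); integrating in $x$ against $c^i_s$ produces the factor $\|c^i_s\|_{Y_{-2\alpha, 2\lambda}}$, while integrating $\check{y}^{-\alpha}\check{y}^\lambda |d_s(y)|$ in $y$ yields $D(s)$ up to a constant. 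The $W(y+z)$ piece is handled by $(y+z)^{-\alpha} \le y^{-\alpha}$ and $(y+z)^\lambda \le 2^{\lambda - 1}(y^\lambda + z^\lambda)$, the latter generating the additional $z$-moment $\int z^\lambda \varphi(z)\, dz \le \|\varphi\|_{Y_{-\alpha,\lambda}}$. Finally, the delicate $W(x-z)$ term, whose $(x-z)^{-\alpha}$ factor is singular along the integration boundary $x = z$, is controlled via \eqref{eq:alpha bound K}: the product $(x-z)^{-\alpha} K(x,y,z)$ is dominated by $C_\alpha\, \hat{x}^{-2\alpha} \hat{y}^{-\alpha} \check{x}^\lambda \check{y}^\lambda \varphi(z)$, and integration in $x$ again yields a factor $\|c^i_s\|_{Y_{-2\alpha, 2\lambda}}$; the remaining $(x-z)^\lambda$ contribution is dominated by $x^\lambda$ and absorbed into the previous bounds. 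Summing the four estimates produces $D(t) \le C \int_0^t D(s)\, ds$, and Gronwall's lemma forces $D \equiv 0$ on $[0,T)$, concluding the proof.
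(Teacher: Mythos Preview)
Your Gronwall argument does not close. The fatal step is the crude bound
\[
|f_t\cdot\gamma^{x,y,z}|\le W(x)+W(x-z)+W(y)+W(y+z).
\]
Consider for instance the $W(x)$ contribution paired with the bilinear piece $d_s(x)\,c^2_s(y)$. After using the kernel bound $K\le\check x^{-\alpha}\check y^{-\alpha}\check x^\lambda\check y^\lambda\varphi(z)$, the $y$- and $z$-integrals produce a finite constant, but the $x$-integral becomes
\[
\int_0^\infty W(x)\,\check x^\lambda\,|d_s(x)|\,dx
\;\ge\;\int_1^\infty x^{2\lambda}\,|d_s(x)|\,dx,
\]
which is \emph{not} dominated by $D(s)=\int(x^{-\alpha}+x^\lambda)|d_s(x)|\,dx$. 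The same obstruction appears for the $W(y)$ and $W(y+z)$ contributions against $c^1_s(x)\,d_s(y)$, and (through the $(x-z)^\lambda$ part) for the $W(x-z)$ contribution against $d_s(x)\,c^2_s(y)$. Your inequality therefore yields only $D(t)\le C\int_0^t\bigl(\text{something involving }\|d_s\|_{Y_{0,2\lambda}}\bigr)\,ds$, and Gronwall cannot be applied.

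The paper avoids this by working with the symmetric weak form built on the discrete Laplacian $\Delta_z$ and, crucially, by retaining the sign structure: with $g(x)=\hat x^{-\alpha}\check x^\lambda\operatorname{sign}(e(x))$ one has the pointwise inequality $(\Delta_z g)(x)\,e(x)\le(\Delta_z\hat p_{-\alpha}\check p_\lambda)(x)\,|e(x)|$. The gain is that the second difference of the weight itself is small, $(\Delta_z\check p_\lambda)(x)\le 3\check z^\lambda$, independent of $x$; this is exactly what keeps the $x$-integral against $|e(x)|$ at the level of $\Mom_{-\alpha,\lambda}(e)$ rather than $\Mom_{-\alpha,2\lambda}(e)$. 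Passing to absolute values before exploiting this second-difference cancellation, as you do, loses a full power of $x^\lambda$ and breaks the argument.
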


In the discussion of the gelation and finite-time existence, we consider kernels with growth at infinity but not at the origin. These assumptions and methods are adaptations of the results of~\cite{si2024existence} to the continuous setting. The gelation here is interpreted as the blow-up of the second moment.

\begin{definition}[Weak gelation] Let $c$ be a weak solution to~\eqref{eq:CGEDG}. The (weak) gelation time is defined as
\[T_{gel}:=\sup\{ t\ge 0: \Mom_2 (c_t) <+\infty \}.  \]
Hereby, for $r>0$ the $r$-moment is defined by $\Mom_{r}(c) = \int x^r c(x)\d x$.
\end{definition}

\begin{remark}
\begin{itemize}
\item For the Smoluchowski equation, it is shown that in \cite[Lemma 9.2.2]{banasiak2019analyticii} and see also the discussions in \cite[Section 5.1]{Hendriks1983}, the gelation in the sense of non-conservation of first moment, defined with $\hat{ T}_{ gel}:=\inf\set*{t\ge 0: \Mom_1(c(t)) < \Mom_1(c(0))}$, is equivalent to the blow-up of some higher moment. Since the boundedness of the second moment implies the conservation of the first moment, the blow-up of the second moment is a weaker notion of gelation. It is not yet clear whether the same equivalence holds for the (CG)EDG model in general.
\item Heuristically, since $(x-z)^2 + (y+z)^2 \ge x^2 + y^2 $ if and only if $y+z \ge x$, the growth of the second moment detects the formation of large clusters also for EDG and we refer to~\cite{aldous1999deterministic} for an in-depth discussion of the phenomenon.
\end{itemize}
\end{remark}
\begin{theorem}[Finite-time existence for quadratic growth]\label{thm:finite-existence}
Assume $K$ satisfies $K(x,y,z)\le \check{x}^2 \check{y}^2 \varphi(z)$ and the bound~\eqref{ass:2.derivative,K} from Assumption~\ref{ass:existence} with $\alpha =0, \lambda =2$. Moreover, let $\varphi \in Y_{0,2}^+$,
then for any $0 \nequiv c_0 \in Y^+_{0,2}$ the weak solution to~\eqref{eq:CGEDG} in $ Y_{0,2}^+$  on
$[0,T_0)$, $T_0:=\bra*{ 2 \|\varphi\|_{0,2}\bigl( \Mom_0(c_0)+\Mom_2(c_0)\bigr)}^{-1}$ exists. Moreover, it preserves the first moment.
\end{theorem}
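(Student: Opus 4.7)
The plan is to construct the solution by kernel truncation and compactness, with the interval $[0,T_0)$ dictated by an a priori second-moment estimate. The core computation is to test the weak formulation~\eqref{eq:swCGEDG} with $f(x)=x^2$, for which $\Delta_zf(x)=2z^2$ is independent of $x$. Combined with the kernel bound $K(x,y,z)\le\check{x}^2\check{y}^2\varphi(z)$ and $\check{x}^2\le 1+x^2$, this yields
\begin{equation*}
\partial_t\Mom_2(c_t)\le 2\|\varphi\|_{0,2}\bigl(\Mom_0(c_t)+\Mom_2(c_t)\bigr)^2.
\end{equation*}
Since $\Delta_zf\equiv 0$ for $f\equiv 1$, the zero moment is preserved, so $u(t):=\Mom_0(c_0)+\Mom_2(c_t)$ satisfies $u'\le 2\|\varphi\|_{0,2}u^2$. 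Its comparison solution stays finite precisely on $[0,T_0)$ with the stated $T_0$, and any approximation satisfying the same kernel bound inherits this estimate.

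For the approximation I introduce $K_n(x,y,z):=K(x,y,z)\chi(x/n)\chi(y/n)\chi(z/n)$ with a smooth cutoff $\chi$ equal to $1$ on $[0,1]$ and supported in $[0,2]$. Each $K_n$ is bounded, satisfies $K_n\le K$, and fits into Assumption~\ref{ass:existence} with $\mu=2$, $\nu=1$, $\alpha=0$ and the replacement weight $\varphi_n(z):=4n\,\varphi(z)\chi(z/n)\in Y_{0,4}^+$, the compact $z$-support bringing $\varphi_n$ into $Y_{0,4}^+$ by $z^4\le(2n)^2z^2$ on the support. The second-derivative bound~\eqref{ass:2.derivative,K} for $K_n$ follows by direct differentiation of $\chi$ combined with the given bound on $\partial_1^2K$ and an integration to control the $\partial_1K$ cross-term. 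Theorem~\ref{thm:existence} then produces global weak solutions $c^n\in C([0,\infty),Y_{0,2}^+)$ from the initial datum $c_0$, and since $K_n\le K$, the a priori estimate of the previous paragraph applies to $c^n$ uniformly in $n$ on $[0,T_0)$.

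Compactness and passage to the limit follow the scheme used in the proof of Theorem~\ref{thm:existence}: the uniform $\Mom_2$-bound yields tightness at infinity, equi-integrability is obtained from a de la Vall\'ee Poussin weight compatible with the moment control, and equicontinuity in time follows by integrating the weak formulation against any $f\in L^\infty(\bR_+)$, with integrand dominated by $\|f\|_\infty\|\varphi\|_{0,2}(\Mom_0+\Mom_2)^2(c^n_s)$. An Arzel\`a--Ascoli extraction in $C([0,T],L^1(\bR_+))$ endowed with the weak-$L^1$ topology yields a subsequential limit $c\in L^\infty([0,T],Y_{0,2}^+)$ for every $T<T_0$, and Fatou transfers the moment bound. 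One passes to the limit in~\eqref{eq:swCGEDG} by first truncating $f\in L^\infty(\bR_+)$ to have bounded support in $x$, so that $\Delta_zf$ is essentially compactly supported, then invoking dominated convergence against the integrable envelope $\check{x}^2\check{y}^2\varphi(z)c(x)c(y)$ on $D\times[0,T]$.

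Preservation of the first moment is then recovered by testing against $f_k(x):=x\wedge k\in L^\infty(\bR_+)$: the discrete Laplacian obeys $|\Delta_zf_k(x)|\le 2z\,\mathbf{1}_{\{x+z\ge k\}}\to 0$ pointwise as $k\to\infty$, and the envelope $2z\,K(x,y,z)c_t(x)c_t(y)$ is integrable on $D$ uniformly in $t\in[0,T]$ for any $T<T_0$, yielding $\Mom_1(c_t)=\Mom_1(c_0)$ by dominated convergence. The main obstacle is the uniform equi-integrability step for the sequence $(c^n)$, since the quadratic growth of $K$ exceeds the scope of Assumption~\ref{ass:existence} and the bound $K_n\le(\check{x}^\mu\check{y}^\nu+\check{x}^\nu\check{y}^\mu)\varphi_n(z)/2$ is available only with an $n$-dependent weight; this is overcome by exploiting the cancellation in $\Delta_z$ via~\eqref{ass:2.derivative,K}, analogously to the exchange-driven growth treatment in~\cite{si2024existence}, and balancing it against the $\Mom_2$-control rather than against a polynomial bound on the kernel alone.
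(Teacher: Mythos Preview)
Your core computation—testing with $p_2(x)=x^2$ to obtain $u'\le 2\|\varphi\|_{0,2}u^2$ for $u=\Mom_0+\Mom_2$ and hence the interval $[0,T_0)$—is correct and identical to the paper's. The divergence is in the approximation scheme. The paper does \emph{not} route through Theorem~\ref{thm:existence}: it works directly with the hard-truncated system of Definition~\ref{def:truncateKernel}, obtains $c^n$ by Picard--Lindel\"of (Proposition~\ref{prop:truncated-sol-moment-preserve}), derives the second-moment bound for $c^n$ exactly as above, and then invokes Propositions~\ref{prop:UIbdd}--\ref{prop:conservation-of-mass-1.moment} verbatim with $\alpha=0$, $\lambda=2$ on each $[0,T]$, $T<T_0$. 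The only input those propositions need beyond~\eqref{ass:2.derivative,K} and the pointwise kernel bound is a uniform control on $\Mom_{-\alpha,\lambda}=\Mom_{0,2}$, which here plays the role of Lemma~\ref{lem:prop-lambda-moment}. This keeps the weight $\varphi$ fixed throughout and avoids the $n$-dependent $\varphi_n$ you rightly flag as an obstacle.

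Your detour has a concrete gap at the step ``an integration to control the $\partial_1K$ cross-term.'' The Leibniz expansion of $\partial_1^2K_n$ contains $2(\partial_1K)\,\chi'(x/n)/n$; since $\chi'\le0$, bounding this from above requires a \emph{lower} bound on $\partial_1K$, whereas integrating the one-sided hypothesis $\partial_1^2K\le\check{y}^2\varphi(z)$ yields only an \emph{upper} bound on $\partial_1K$. (A lower bound \emph{is} recoverable, but not by integration alone: combining $K\ge0$ with the second-derivative bound via a quadratic comparison gives $\partial_1K\ge -\sqrt{2\check{y}^2\varphi(z)\,K}\ge-\sqrt{2}\,\check{x}\check{y}^2\varphi(z)$, which on $x\in[n,2n]$ makes the cross-term $\le O(1)\check{y}^2\varphi(z)$.) Even with this repair, you still have to redo the equi-integrability argument of Proposition~\ref{prop:UIbdd} against the original $K$ and $\varphi$ rather than $\varphi_n$—which is precisely what the paper's direct route already does—so the passage through Theorem~\ref{thm:existence} buys nothing. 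Your first-moment argument via $f_k=x\wedge k$ and dominated convergence is correct and parallels Proposition~\ref{prop:conservation-of-mass-1.moment}.
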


\begin{theorem}[Finite-time gelation]\label{thm:blow-up-time} Assume $K$ satisfies  $\varphi_1(z)(\check{x}^2 \check{y}^\mu + \check{x}^{\mu} \check{y}^2)\1_{(z,+\infty)^2}(x,y)\le K(x,y,z)$ and $K(x,y,z)\le \check{x}^2 \check{y}^2 \varphi(z)$, for $\mu \in (1,2]$, the Equation~\eqref{ass:2.derivative,K} from Assumption~\ref{ass:existence} with $\alpha =0, \lambda =2$ and $\varphi\in Y^+_{0,2}$ and $\varphi_1 \in Y^+_{0,1+\mu}$.
Suppose $0 \nequiv c_0 \in Y_{0,1+\mu}^+$ satisfies $\Mom_2(\varphi_1) \Mom_{\mu}(c_0)>\Mom_{1+\mu}(\varphi_1)\Mom_1(c_0)$. Then it holds
\[\Mom_{\mu}(c_t) \ge \bra*{\frac{1}{\Mom_{\mu}(c_0)-\frac{\Mom_{1+\mu}(\varphi_1)}{\Mom_2(\varphi_1)}\Mom_1(c_0)} - \mu(\mu-1)2^{\mu-2}\Mom_2(\varphi_1) \, t }^{-1}+\frac{\Mom_{1+\mu}(\varphi_1)}{\Mom_2(\varphi_1)}\Mom_1(c_0).\]
Moreover, the gelation time of the weak solution as constructed in Theorem \ref{thm:finite-existence} is finite and  satisfies
\[T_{gel} \le \bra*{\mu(\mu-1)2^{\mu-2}\Mom_2(\varphi_1)  \bra*{\Mom_{\mu}(c_0)-\frac{\Mom_{1+\mu}(\varphi_1)}{\Mom_2(\varphi_1)}\Mom_1(c_0)}}^{-1}.\]
\end{theorem}
\begin{remark}
	In the bound for $T_{gel}$, the integrability of $\varphi$ is the crucial addition in comparison to the results for the discrete EDG model from~\cite{si2024existence}.
	The key arguments in gelation are to derive a moment bound of the solution in the existence time interval.

	In particular, under the assumptions of Theorem~\ref{thm:blow-up-time}, there is no global  mass conserving weak solution  $c$ to~\eqref{eq:CGEDG} in $ Y_{0,2}^+$. Indeed, suppose there exists a global mass conserving solution in $ Y_{0,2}^+$, then for $\mu\in (1,2]$, it holds $\Mom_{\mu}(c(t))<+\infty$ for all $t\ge 0$, which contradicts the finite blow up of $\Mom_\mu(c(t))$ from Theorem \ref{thm:blow-up-time}.
\end{remark}
\begin{theorem}[Instantaneous gelation]\label{thm:instant-gel}
Assume there exists $\beta >2$ such that $K$ satisfies $\varphi_{1}(z) (\check{x}^{\beta}+\check{y}^{\beta})\1_{(z,+\infty)^2}(x,y) \le  K(x,y,z)$ and $K(x,y,z) \le\varphi(z)( \check{x}^{k} + \check{y}^k )$ for some $k\in\bN$ with $k> \beta$ and $\varphi,\varphi_1\in Y_{0,\infty}^+$,
where the rate function $\varphi_1$ charges every neighbourhood of the origin, that is\footnote{Equivalently, $0$ lies in the support of $\varphi_1$.}
\begin{equation}\label{eq:Hcond}\tag{H}
\int_0^\delta \varphi_1(z)\,\d z>0\qquad\text{for every }\delta>0 \,.
\end{equation}
Then for any weak solution of~\eqref{eq:CGEDG} $(c_t)_{t\ge 0}$ in $ Y_{0,2}^+$ with initial datum $c_0 \in  Y_{0,\infty}^+$ and $\Mom_0(c_0) >0$, instantaneous gelation occurs, i.e.  $T_{gel} =0$.
\end{theorem}
\begin{remark}\label{rmk:Hnecessary}
The assumption~\eqref{eq:Hcond} is necessary; it means that $0$ lies in the support of $\varphi_1$, i.e.\ $\varphi_1$ charges every neighbourhood of the origin.
Indeed, if $\varphi_1$ vanishes on $[0,z_0)$ for some $z_0>0$, then~\eqref{eq:Hcond} fails; only exchanges of mass $z\ge z_0$ are then possible, so for an initial datum supported in $(0,z_0)$ no exchange can occur, the solution remains equal to $c_0$, its second moment stays bounded by $z_0^2\Mom_0(c_0)$, and instantaneous gelation fails.
\end{remark}

\begin{remark} In comparison with the statement in the discrete setting \cite[Theorem 2.9]{si2024existence}, the upper bound on the kernel is needed to admit a wide class of functions satisfying the weak form. For the instantaneous gelation for the continuous Smoluchowski equation, a corresponding upper bound in \cite[Volume 2, Theorem 9.2.1]{banasiak2019analyticii} is assumed.

The proof strategy for the instantaneous gelation, showing finiteness of all moments in the existence interval and deriving an upper bound for the blow-up time via higher moments, which converges to zero asymptotically, appeared in~\cite{van1987instantaneous, carr1992instantaneous} for the Smoluchowski equation.

The result also shows that for $K$ satisfying the assumptions of Theorem~\ref{thm:instant-gel} and $c_0 \in  Y_{0,n}^+$ for all $n\in\bN$, there is no weak solution $(c_t)_{t \ge 0}$ to~\eqref{eq:CGEDG} in $Y_{0,2}^+$ on any interval $[0,T)$ for $T>0$.
Indeed, if such a solution exists for some $T>0$, then from the propagation of lower moments (proven in Lemma~\ref{lem:propagation of moment for lower bdd kernel} below), we get $\Mom_p(c_t)<+\infty$ for all $t\in[0,T)$ and any $p \in \bN$ which contradicts $T_{gel}=0$ from Theorem \ref{thm:instant-gel}.
\end{remark}
\subsection{Open questions}
In this work, we used the $L^1$ framework for the solution. The assumptions on kernel \eqref{ass:2.derivative,K} and \eqref{eq:alpha bound K}  were needed to ensure uniform integrability.  In particular, we need $K(x,y,z)$ to be small as $z$ approaches $x$. In this framework, the formation of atoms is not allowed. However, it would also be reasonable to consider measure-valued solutions, as has been done for the Smoluchowski coagulation equations.
This would enable a unified framework for the discrete and continuous models.  We also note that the Smoluchowski coagulation equations are more well-studied than the full coagulation-fragmentation equations. The similarity to the Smoluchowski coagulation equations and the symmetry of exchange dynamics imply that while it is possible to use a similar strategy as the coagulation equations for~\eqref{eq:CGEDG}, one can treat both coagulation and fragmentation effects simultaneously. For the Smoluchowski coagulation equations, the measure-valued solutions were studied in \cite{norris2014measure, Fournier2006} with more recent works on the multi-component generalizations \cite{ferreira2025measure}.

A related question is the shattering phenomenon, that is, the formation of atomic mass (e.g. at zero) in the solution $c$ from a diffuse initial condition. This is analogous to the shattering phenomenon in the Smoluchowski (coagulation-fragmentation) equation. In the case of Smoluchowski equations, it would also lead to the non-existence of solutions. Nevertheless, due to the differences in the fragmentation terms, new methods would be required. In addition, as we observe in this work, one needs different estimates for small cluster sizes $(x,y\ll 1)$ and large cluster sizes $(x,y\gg 1)$ for singular kernels. Intuitively, the competition of the singularity at zero and growth at infinity in the kernel leads to strong interaction between small and large clusters. Its effects on the phase transition remain open.

\section{Existence from the convergence of truncated system}\label{sec:existence}
The proof of Theorem \ref{thm:existence} uses the by-now classical technique of weak $L^1$ compactness, which has been successfully used for EDG and other related coagulation-fragmentation equations. For this reason, we introduce the truncated system and consider its compactness.
\begin{definition}[Symmetric truncated kernel]\label{def:truncateKernel}
The truncated kernel on $(1/n,n)$, $2 \le n\in \bN$, is defined by, for $x,y,z\in\bR_+$
\[K_n(x,y,z)=K(x,y,z) \1_{(1/n,n)^3}(x,y,z) \1_{(0,n)^2}(x+z,y+z)\1_{(1/n,\infty)^2}(x-z,y-z).\]
\end{definition}
Based on the truncated kernel $K_n$ from Definition~\ref{def:truncateKernel}, we arrive at the truncated equation, which is given for $x \ge 0$ by
\begin{equation}\label{eq:truncated} \begin{split}\partial_t c^n_t (x)= &\iint \d z  \d y \, \kappa_n[c_t^n](y,x-z,z) - \iint \d z \d y\, \kappa_n[c_t^n](x,y,z)   \\
&- \iint \d z \d y \, \kappa_n[c_t^n](y,x,z) + \iint \d z \d y \, \kappa_n[c_t^n](x+z, y,z ) \,,
\end{split}\end{equation}
where now $\kappa_n[c^n](x,y,z):= K_n(x,y, z) c^n(x) c^n(y)$.
Likewise, a given initial datum $c_0 \in L^1(\bR_+)$ gives rise to initial data of the truncated system by the truncation $c^n_0(x)= c_0(x) \1_{(1/n,n)}(x)$. The above truncation on $K$ and $c$ ensures $c_t^n(x) = c_0^n(x)=0$ for $x \le 1/n $ or $x \ge n$.
\begin{lemma}\label{lem:weak-form}
Let $n \ge 2$. If $(c^n_t)_{t\ge 0}$ is a classical solution of the truncated system  \eqref{eq:truncated} on $[0,\infty)$, then for $f\in L^\infty((1/n,n))$ it holds
 for all $t\ge 0$
\begin{equation}\label{eq:weak-truncated}\begin{split}
\int_{1/n}^n  f(x) (c^n_t( x) - c^n_0( x)) \d x&
=\int_0^t \d s \iiint\d z \d x \d y \, (\Delta_z f)(x) \kappa_n[c^n_s](x,y,z).
\end{split}
\end{equation}
\end{lemma}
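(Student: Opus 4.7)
The plan is to multiply the truncated strong equation \eqref{eq:truncated} by a test function $f\in L^\infty((1/n,n))$, integrate in $x$, and rearrange the four resulting triple integrals by Fubini, the symmetry $K_n(x,y,\cdot)=K_n(y,x,\cdot)$, and two elementary translations in $x$. Because the indicator factors in Definition~\ref{def:truncateKernel} make $K_n$ bounded and compactly supported in $(1/n,n)^3$, and because the classical solution $c^n_t$ is continuously differentiable in $t$ with values in $L^1((1/n,n))$, every triple integral on the right-hand side of \eqref{eq:truncated} converges absolutely and depends continuously on $t$; this justifies both the order swaps and the interchange of $\partial_t$ with the spatial integration.

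After multiplying \eqref{eq:truncated} by $f(x)$ and integrating in $x \in (1/n,n)$, I would denote the four resulting triple integrals by $I_1,-I_2,-I_3,I_4$ in the order they appear. Then $I_2$ is already in the canonical form $\iiint f(x)\,\kappa_n[c^n_t](x,y,z)\,\dx z\,\dx y\,\dx x$; the symmetry $K_n(y,x,\cdot)=K_n(x,y,\cdot)$ reduces $I_3$ to the same expression; the substitution $x\mapsto x+z$ (i.e.\ relabelling the original $x-z$ as $x$) combined with symmetry converts $I_1$ into $\iiint f(x+z)\,\kappa_n[c^n_t](x,y,z)\,\dx z\,\dx y\,\dx x$; and $x\mapsto x-z$ converts $I_4$ into $\iiint f(x-z)\,\kappa_n[c^n_t](x,y,z)\,\dx z\,\dx y\,\dx x$. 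These translations cause no trouble because the cutoff factors $\mathbf{1}_{(0,n)^2}(x+z,y+z)$ and $\mathbf{1}_{(1/n,\infty)^2}(x-z,y-z)$ inside $K_n$ force $x\pm z$ to lie in $(1/n,n)$ whenever the kernel is non-zero, so $f$ is always evaluated in its domain of definition and no boundary contributions appear.

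Summing the four pieces and recognising $f(x+z)-2f(x)+f(x-z)=(\Delta_z f)(x)$ yields
\[
\frac{d}{dt}\int_{1/n}^n f(x)\, c^n_t(x)\,\dx x \;=\; \iiint (\Delta_z f)(x)\,\kappa_n[c^n_t](x,y,z)\,\dx z\,\dx y\,\dx x,
\]
and integrating from $0$ to $t$ gives exactly \eqref{eq:weak-truncated}. I expect no genuine obstacle: the lemma is essentially a bookkeeping identity enabled by the truncation. The only point that requires a little care is checking that the translations $x\mapsto x\pm z$ leave $f$ in its domain of definition, which is precisely guaranteed by the support conditions built into $K_n$ in Definition~\ref{def:truncateKernel}.
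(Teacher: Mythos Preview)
Your proposal is correct and follows exactly the approach the paper indicates: symmetry of $K_n$ in its first two arguments together with the translations $x\mapsto x\pm z$, after which the four terms combine into $(\Delta_z f)(x)$. The paper's own proof is a two-line sketch invoking precisely these ingredients, so your write-up is simply a more detailed version of the same argument, including the useful observation that the indicator factors in $K_n$ keep $x\pm z$ inside $(1/n,n)$ so that $f$ is always evaluated in its domain.
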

\begin{proof}
	The Lemma is an immediate consequence of the symmetry of the kernel based on Assumption~\ref{ass:global} and a change of variable in $x$.
	See also \cite[Section 3.1]{barik2025continuous} for a similar calculation.
\end{proof}
\begin{proposition}\label{prop:truncated-sol-moment-preserve} Suppose $K$ satisfies $K_n(x,y,z)\le f(n) \varphi(z)$ for $f :\bN \to \bR_+$, $\varphi \in L^1(\bR_+)$.
Then for every $n \ge 2$, the truncated system \eqref{eq:truncated} has a unique non-negative solution $c^n \in C^1([0,\infty), L^1((1/n,n)))$. Furthermore, for any $t\ge 0$, it conserves the mass
\begin{equation}\label{eq:cons-mass}\int^n_{1/n} c_t^n(x) \d x = \int_{1/n}^n  c_0^n(x) \d x,\end{equation} and the first moment
\begin{equation}\label{eq:cons-first-mom}\int^n_{1/n} x c_t^n(x) \d x = \int_{1/n}^n x c_0^n(x) \d x \,.
\end{equation}
\end{proposition}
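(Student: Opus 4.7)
The point is that on the bounded interval $(1/n,n)$ the truncated kernel is uniformly bounded by $K_n(x,y,z)\le f(n)\varphi(z)$ with $\varphi\in L^1(\bR_+)$, so \eqref{eq:truncated} is simply an ODE in the Banach space $X_n := L^1((1/n,n))$ with a quadratic, locally Lipschitz right-hand side $F_n$. Using Fubini and the kernel bound one checks
\[
\|F_n(c)-F_n(\tilde c)\|_{X_n} \le C(n)\,\|\varphi\|_{L^1}\bra*{\|c\|_{X_n}+\|\tilde c\|_{X_n}}\|c-\tilde c\|_{X_n},
\]
and Picard--Lindelöf delivers a unique maximal classical solution $c^n\in C^1([0,T_n^*), X_n)$ with $c^n(0)=c_0^n$.

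To obtain non-negativity, decompose $F_n(c) = G_n(c) - L_n(c)\,c$, where $G_n$ collects the two gain integrals (the first and last terms of \eqref{eq:truncated}) and the loss multiplier
\[
L_n(c)(x) := \iint \pra*{K_n(x,y,z)+K_n(y,x,z)}\,c(y)\,\d y\,\d z
\]
is non-negative whenever $c\ge 0$, and bounded by $C(n)\|\varphi\|_{L^1}\|c\|_{X_n}$. Hence the Picard iteration $\partial_t c^{n,(k+1)} + L_n(c^{n,(k)})\,c^{n,(k+1)} = G_n(c^{n,(k)})$, started from the non-negative $c_0^n$, preserves non-negativity at every step by the Duhamel formula for a linear ODE with non-negative source and non-negative initial datum; passing to the limit gives $c^n_t \ge 0$ on $[0,T_n^*)$.

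The two conservation laws are then immediate consequences of Lemma \ref{lem:weak-form}. Taking $f\equiv 1\in L^\infty((1/n,n))$ we have $(\Delta_z f)\equiv 0$, proving \eqref{eq:cons-mass}. Since $(1/n,n)$ is bounded, $f(x)=x$ also lies in $L^\infty((1/n,n))$; moreover, by Definition \ref{def:truncateKernel} the indicators in $K_n$ force $x\pm z, y\pm z \in (1/n,n)$ on the support of $K_n$, so the value $(\Delta_z f)(x) = (x+z)-2x+(x-z) = 0$ is evaluated only at admissible points, whence \eqref{eq:cons-first-mom} follows. Finally, non-negativity combined with \eqref{eq:cons-mass} yields $\|c^n_t\|_{X_n}=\|c_0^n\|_{X_n}$ for every $t<T_n^*$, ruling out $X_n$-blow-up, so the standard continuation criterion forces $T_n^*=\infty$.

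The proof is essentially routine; the only minor point of care is the verification that the truncation constraints baked into $K_n$ keep the discrete Laplacian $(\Delta_z f)$ confined to $(1/n,n)$ when testing with $f(x)=x$, so that no boundary contributions appear in the conservation identities.
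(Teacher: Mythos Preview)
Your overall strategy matches the paper's exactly: a local Lipschitz estimate gives Picard--Lindel\"of well-posedness, conservation follows from Lemma~\ref{lem:weak-form} with $f\equiv 1$ and $f(x)=x$, and global existence holds because mass conservation forbids $L^1$-blow-up. The only substantive difference is how non-negativity is obtained. The paper replaces the first gain term by its positive part, applies Picard--Lindel\"of to that modified equation, and then shows via a Gronwall bound on $\|(-c^n_t)_+\|_{L^1}$ that the negative part stays zero, so the modified and original equations coincide. Your gain/loss splitting $F_n(c)=G_n(c)-L_n(c)\,c$ together with a semi-implicit iteration is an equally standard device (familiar from Becker--D\"oring and Smoluchowski existence proofs) and is arguably more transparent.

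One point to tighten: the scheme $\partial_t c^{(k+1)} + L_n(c^{(k)})\,c^{(k+1)} = G_n(c^{(k)})$ is \emph{not} the Picard iteration that Picard--Lindel\"of produces, so you cannot simply cite that theorem for its convergence. You need a short separate contraction argument for the map $c^{(k)}\mapsto c^{(k+1)}$ on $C([0,\tau],X_n)$ for small $\tau$ (using the Duhamel representation, the bound $0\le e^{-A}\le 1$, and local Lipschitz continuity of $L_n$ and $G_n$); its fixed point then solves $\dot c=F_n(c)$ and by uniqueness equals the Picard--Lindel\"of solution, which therefore inherits non-negativity on $[0,\tau]$. After that, mass conservation pins the $X_n$-norm, so the step size $\tau$ is uniform and non-negativity propagates to all of $[0,T_n^*)$ as you claim.
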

\begin{proof}
	The result can be proven via the Picard-Lindelöf theorem, as done
	in~\cite[Proposition 3.2]{barik2025continuous} and
	in~\cite[Proposition 4.1]{barik2020mass}. For completeness, we provide a proof in our settings here.
	Using the assumption of $K$ and $\varphi$, we have
\[K_n(x,y,z) \le f(n) \varphi(z) \text{ for } n \ge 1.\]
We show that the right side of \eqref{eq:truncated} is locally Lipschitz in $L^1(1/n,n)$.  Let $c, c' \in L^1(1/n,n)$. We consider the norm in $L^1(1/n,n)$ of each of the terms in the right side \eqref{eq:truncated}. With a change of variable, we get
\begin{equation}\begin{split}
 \int & \d x \,\biggl| \iint \d z  \d y \, \bigl( \kappa_n[c](y,x-z,z) -  \kappa_n[c'](y,x-z,z) \bigr) \biggr| \\
 &\le  \int  \d x  \iint \d z  \d y \, K_n(y,x-z,z) \bigl|c(y)c(x-z) - c'(y) c'(x-z)\bigr|\\
 &\le  \int \d x  \iint \d z  \d y \, K_n(y,x,z) \bigl|c(y)c(x) - c'(y) c'(x)\bigr|\\
 &\le f(n) \| \varphi\|_{0 }( \| c\|_{L^1(1/n,n)} + \| c'\|_{L^1(1/n,n)} )  \| c-c'\|_{L^1(1/n,n)}.
\end{split}
\end{equation} So that $c \mapsto \iint \d z  \d y \, \kappa_n[c_t^n](y,\cdot-z,z) $ is a local Lipschitz map in $L^1(1/n,n)$. Similar calculations for each of the four terms imply the right side induces a locally Lipschitz function on $L^1(1/n,n)$. Therefore by the Picard-Lindelöf theorem there exists a unique solution of the initial value problem $c^n \in C^1([0,\mathcal{T}), L^1((1/n,n)))$ up to a maximal time $\mathcal{T}\in (0,\infty]$ and has blow-up in the sense that $\lim_{t\to \mathcal{T}}\|c^n_t\|_{L^1(1/n,n)}=+\infty$ if $\mathcal{T}<+\infty$.

For the positivity of $c^n$, we note that the positive part of a locally Lipschitz function is also locally Lipschitz. Therefore, the Picard-Lindelöf theorem implies the existence and uniqueness of solutions	 of the initial value problem \begin{equation}\label{eq:postruncated}
\begin{split}\partial_t c^n_t (x)= & \biggl(\iint \d z  \d y \, \kappa_n[c_t^n](y,x-z,z) \biggr)_{\!+}- \iint \d z \d y\,  \kappa_n[c_t^n](x,y,z)   \\
&- \iint \d z \d y \, \kappa_n[c_t^n](y,x,z) + \iint \d z \d y \, \kappa_n[c_t^n](x+z, y,z ) \text{ for } x \ge 0 \,,
\end{split}\end{equation}
where for $a\in \bR$ the notation $(a)_+ = \max\set*{0,a}$ denotes the positive part.
We will now show that $c^n_t  \ge 0$ for $t\in [0,\mathcal{T})$. For this, we calculate $\frac{\d}{\d t}(-c^n_t)_+  =\operatorname{sign}_+(-c^n_t) \frac{\d }{\d t} (-c^n_t)$, where $\operatorname{sign}_+(x) =1 $ for $x \ge 0$ and $0$ for $x <0$, so that
\[\begin{split} \frac{\d}{\d t} & \|(- c^n_t)_+  \|_{L^1(1/n,n)} = - \int_{1/n}^n   \operatorname{sign}_+(-c^n_t) \frac{\d}{\d t} c^n_t(x) \d x \\
& \le  \int  \d x \,\operatorname{sign}_+(-c^n_t)  \\
&\qquad \bra*{ \iint \d z \d y\, \kappa_n[c_t^n](x,y,z)   + \iint \d z \d y \, \kappa_n[c_t^n](y,x,z) +(-1) \iint \d z \d y \, \kappa_n[c_t^n](x+z, y,z )}. \end{split} \]
Using the bound on $K_n$ with a change of variable from $x+z \to x$ in the last integral, we can bound each of the three integrals with $ f(n)\| \varphi\|_{0} \| (-c^n_t)_+\|_{L^1(1/n,n)} \|c^n_t\|_{L^1(1/n,n)} $. Therefore, we have the differential inequality,
\[ \frac{\d}{\d t}  \|(- c^n_t)_+  \|_{L^1(1/n,n)} \le  3 f(n)\| \varphi\|_{0} \| (-c^n_t)_+\|_{L^1(1/n,n)} \|c^n_t\|_{L^1(1/n,n)} \] and Grönwall's lemma implies
\[  \|(- c^n_t)_+  \|_{L^1(1/n,n)} \le \|(- c^n_0)_+  \|_{L^1(1/n,n)} \exp\biggl(  3 f(n)\| \varphi\|_{0}  \int_0^t \|c^n_s\|_{L^1(1/n,n)} \d s \biggr).\]
With the non-negativity of the initial condition, we conclude \[  \|(- c^n_t)_+  \|_{L^1(1/n,n)} \le 0\] so that $c^n_t \ge 0$ for $t\in [0,\mathcal{T})$ and hence the equation~\eqref{eq:postruncated} agrees with~\eqref{eq:truncated}.

By the rewriting of Lemma \ref{lem:weak-form} and the fact that $\{1, x\mapsto x\}$ are in the kernel of $\Delta_z$, the conservation of the zeroth~\eqref{eq:cons-mass} and first moment~\eqref{eq:cons-first-mom} follow.
Moreover, recall that the truncation introduced implies $c_t^n(x) = c_0^n(x)$ for $x \le 1/n $ or $x \ge n$.  The conservation of mass and the non-negativity of $c^n_t$ imply
\[
	\|c^n_t\|_{L^1(1/n,n)} = \|c^n_0\|_{L^1(1/n,n)} \qquad \forall t \in [0,\mathcal{T}) \,.
\]
In particular, blow-up does not occur in $L^1$ so that $\mathcal{T}=+\infty$.
\end{proof}
\begin{remark}
The assumption of Proposition~\ref{prop:truncated-sol-moment-preserve} holds under the global existence Assumptions~\ref{ass:existence} and the assumptions of the local existence Theorem~\ref{thm:finite-existence}. In the latter theorem, we apply the arguments for global existence in this remaining part of this section, modulo the fact that the estimates below can only hold up to some finite time.
\end{remark}
We will extend $(c^n_t)_{t\ge 0}$ to $\bR_+$ by setting $c^n_t(x)=0$ for $x \ge n$ or $x \le 1/n$.
\begin{definition}[Mixed moments]
	For $\alpha \ge 0$, $\lambda > 0$ the mixed $(-\alpha,\lambda)$ moment is defined by
\[\Mom_{-\alpha,\lambda}(c) :=  \int_0^\infty  \, \hat{y}^{-\alpha}   \check{y}^{\lambda}  c(y) \d y \,.\]
\end{definition}

\begin{lemma}[Propagation of mixed moments]\label{lem:prop-lambda-moment}
Let $T\in (0,\infty)$ and let $c_0 \in Y_{-\alpha,\lambda}^+$.
Then there exists $C>0$ depending only on the constants in Assumption~\ref{ass:existence} and $c_0$ such that
\[\Mom_{-\alpha,\lambda}(c^n_t) \le   C T \exp(C T)\quad \forall t \in [0,T] \quad \forall n>1.  \]
Moreover, the family $\set*{(c_t^n)_{t\in [0,T]}}_{n\in \bN}$ is $Y_{-\alpha,0}$-equicontinuous in time, that is there exists $C>0$ independent of $n\in \bN$ such that
for $ 0 \le s \le t \le T$ it holds
\begin{equation}\label{eq:equicontinuity}
	\int_{1/n}^n (1+x^{-\alpha}) \bigl|c^n_t(x)-c^n_s(x)\bigr| \d x  \le C (t-s) .
\end{equation}
\end{lemma}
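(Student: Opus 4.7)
I would apply the weak formulation~\eqref{eq:weak-truncated} with the test function $\phi(x) := \hat x^{-\alpha}\check x^{\lambda}$, which is bounded by $n^{\alpha+\lambda}$ on the truncated support $[1/n, n]$, hence admissible. This yields
\[
\Mom_{-\alpha,\lambda}(c^n_t) - \Mom_{-\alpha,\lambda}(c^n_0) = \int_0^t \! \dx{s} \iiint_D (\Delta_z \phi)(x)\, K_n(x,y,z)\, c^n_s(x) c^n_s(y)\,\dx{x}\dx{y}\dx{z}.
\]
The key analytical input is an algebraic inequality of the factorized form
\[
|(\Delta_z\phi)(x)|\, K(x,y,z) \le C\,\varphi(z)\,\bigl[1 + \hat x^{-\alpha}\check x^{\lambda}\bigr]\bigl[1 + \hat y^{-\alpha}\check y^{\lambda}\bigr] \quad \text{on } \{x\ge z,\, y\ge z\},
\]
with $C$ depending only on the structural quantities in Assumption~\ref{ass:existence}. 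Once this is in place, integrating against $c^n_s(x) c^n_s(y)$, using $\varphi \in Y_{-2\alpha, 2\lambda}^+$ together with the preserved moments $\Mom_0$ and $\Mom_1$ of $c^n_s$ from Proposition~\ref{prop:truncated-sol-moment-preserve}, leads to a linear Gronwall inequality of the form $\frac{d}{dt}\Mom_{-\alpha,\lambda}(c^n_t) \le C\bigl(1 + \Mom_{-\alpha,\lambda}(c^n_t)\bigr)$, from which the stated $CT\exp(CT)$ bound follows.

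To establish the factorized estimate, I would split $\phi = \phi_1 + \phi_2$ with $\phi_1(x) = \hat x^{-\alpha}$ and $\phi_2(x) = \check x^{\lambda}$, and treat the singular and convex-growth parts separately. For $\phi_2$, I would distinguish the regimes $z \le 1$ (using the Taylor-type bound $|(\Delta_z\phi_2)(x)| \le C z^2 \check x^{\lambda-2}$ valid for $\lambda \in (1,2]$, combined with the kernel growth~\eqref{eq:generalbdd}; the constraint $\mu + \nu \le 3$ is exactly what balances the total $x$-power down to at most $\lambda$) and $z > 1$ (where the cruder bound $|\Delta_z\phi_2| \le C(\check x^{\lambda} + z^{\lambda})$ is absorbed by the $z^{2\lambda}$-integrability of $\varphi$). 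Near the boundary $x \approx z$ where the Taylor bound degenerates, the second-derivative assumption~\eqref{ass:2.derivative,K} on $\partial_1^2 K$ provides the refined cancellation required. For $\phi_1$, the discrete Laplacian $\Delta_z\phi_1$ carries a $\widehat{(x-z)}^{-\alpha}$ contribution, precisely the singular factor that the tailored bound~\eqref{eq:alpha bound K} is designed to absorb.

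For the equicontinuity bound~\eqref{eq:equicontinuity}, I would write $c^n_t(x) - c^n_s(x) = \int_s^t \partial_\tau c^n_\tau(x)\,\dx{\tau}$ and estimate $\int_{1/n}^n (1 + x^{-\alpha})\,|\partial_\tau c^n_\tau(x)|\,\dx{x}$ term-by-term from~\eqref{eq:truncated}. Each of the four contributions is a quadratic expression in $c^n_\tau$ with kernel weight $K_n$ and the extra factor $(1 + x^{-\alpha})$; by the moment estimate above and Proposition~\ref{prop:truncated-sol-moment-preserve}, each is controlled uniformly on $[0, T]$ by products of $\Mom_0$, $\Mom_1$, and $\Mom_{-\alpha,\lambda}$ of $c^n_\tau$, giving the required constant $C$. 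The hardest step throughout is the algebraic inequality: it must simultaneously accommodate the superlinear growth ($\lambda > 1$), the singularity at the origin ($\alpha > 0$), and extract enough cancellation from $\Delta_z$ to prevent the total $x$-power from exceeding $\lambda$ after pairing with $K$---precisely the regime in which the three assumptions~\eqref{eq:generalbdd}, \eqref{ass:2.derivative,K} and \eqref{eq:alpha bound K} interlock.
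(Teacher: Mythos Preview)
Your plan has the right skeleton---test with $\phi=\hat x^{-\alpha}\check x^\lambda$, control $(\Delta_z\phi)K$, and apply Gronwall---but the factorized inequality you call the ``key analytical input'',
\[
|(\Delta_z\phi)(x)|\,K(x,y,z)\le C\,\varphi(z)\,\bigl[1+\phi(x)\bigr]\,\bigl[1+\phi(y)\bigr],
\]
does not deliver the linear Gronwall inequality you claim: integrating the right-hand side against $c^n_s(x)c^n_s(y)$ yields $C\bigl(\Mom_0+\Mom_{-\alpha,\lambda}(c^n_s)\bigr)^2$, a \emph{quadratic} differential inequality whose solutions blow up in finite time and cannot give the global bound $CT\exp(CT)$. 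The linearity is genuinely subtle. In the regime $x,y\ge 1$, $z\le x/2$, the Taylor estimate gives $(\Delta_z p_\lambda)(x)\le C z^2 x^{\lambda-2}$, so after pairing with~\eqref{eq:generalbdd} the integrand carries $x^{\lambda-2}(x^\mu y^\nu+x^\nu y^\mu)$; the paper then invokes a H\"older interpolation between $\Mom_1$ and $\Mom_\lambda$ (from~\cite[Lemma~3.2]{si2024existence}) to bound the resulting double integral by $C_L(1+\Mom_\lambda)$ with $C_L$ depending only on the conserved $\Mom_1(c_0)$. This is where $\mu+\nu\le 3$ actually enters: it caps the \emph{combined} $(x,y)$-degree at $\lambda+1$, so the interpolation produces at most one factor of $\Mom_\lambda$. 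Your reading---``balances the total $x$-power down to at most $\lambda$''---is the weaker statement $\mu\le 2$, which already holds by hypothesis and is not enough. In the small-$x$ regimes the paper obtains linearity by a different mechanism: the $x$-singularities are absorbed into $z$-factors via the constraint $x\ge z$, so the $x$-integral collapses to $\Mom_0$.

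Two smaller points. The second-derivative bound~\eqref{ass:2.derivative,K} is not used in this lemma (it is needed later, for uniform integrability); for $z\ge x/2$ the paper simply bounds $(\Delta_z p_\lambda)(x)\le(3z)^\lambda$ and absorbs it via $\varphi\in Y_{0,2\lambda}$. For the equicontinuity, your term-by-term estimate of the strong form is viable, but a naive bound produces $\hat x^{-2\alpha}$ in the $x$-integral; the paper instead uses the weak form with test function $f(x)x^{-\alpha}$, applies~\eqref{eq:alpha bound K}, and splits $\hat x^{-2\alpha}\le \hat z^{-\alpha}\hat x^{-\alpha}$ (from $x\ge z$) so that only $\Mom_{-\alpha,\lambda}$ is required.
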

\begin{proof}
We use $h(x)=\hat{x}^{-\alpha} \check{x}^{\lambda}$ for $x>0$ as test-function in the weak truncated form~\eqref{eq:weak-truncated} and get
\begin{equation*}
\int_{1/n}^n  \, \hat{x}^{-\alpha}  \check{x}^{\lambda} (c_t^n (x)- c^n_0(x))  \d x=  \int_0^t \d s \iiint\d z \d x \d y\, (\Delta_z h)(x)  \kappa_n [c^n_s](x,y, z) .
\end{equation*}
From its definition~\eqref{eq:discretelaplace}, the discrete Laplacian of $h$ splits up into three mutually exclusive cases, which are
\begin{alignat*}{2}
	(\Delta_z h)(x) &= (\Delta_z p_{\lambda})(x) &&\text{ if } x-z \ge 1,  \\
	(\Delta_z h)(x) &=(\Delta_z p_{-\alpha})(x) &&\text{ if } x+z \le 1, 	\\
	(\Delta_z h)(x) &\le (1+2z)^{\lambda}+ (x-z)^{-\alpha} \quad &&\text{ if } 1-z \le x\le 1+z,
\end{alignat*} where we used $p_r(x):=x^r$ for $r \in \bR$.
Hence, we arrive at the splitting
\begin{multline}\label{eq:DeltaSplit}
\iiint \d z \d x \d y\, (\Delta_z h)(x)  \kappa_n[c^n_s](x,y, z)  \\
=\iiint  \d z \d x \d y\, \biggl[\1_{[1+z,\infty)}(x)  (\Delta_z p_\lambda)(x) + \1_{[z,1-z]}(x) \1_{[0,1/2]}(z) (\Delta_z p_{-\alpha})(x) \\
 +\1_{[z \vee (1-z),1+z]}(x) (\Delta_z h)(x)\biggr] \kappa_n[c^n_s](x,y, z).
\end{multline}
We now estimate the first integral in~\eqref{eq:DeltaSplit}. Since the support of $\kappa_n[c^n_s]$ is contained in $\{(x,y,z): x\ge z\}$, we have the following cases: If $x/2 \le z \le x$, then \[(\Delta_z p_{\lambda})(x) \le p_{\lambda}(x+z) \le p_{\lambda}(3 z) = (3 z)^\lambda   \] as $p_\lambda(x)=x^{\lambda}$ is an increasing function.
Otherwise we have $0\le z \le x/2$, then \begin{align}(\Delta_z p_{\lambda})(x) \le & (p_{\lambda}'(x+z) - p_{\lambda}'(x-z)) z\le 2 p_{\lambda}''(x-z) z^2 \nonumber \\
= & 2 \lambda(\lambda -1) \Bigl(\frac{x-z}{x}\Bigr)^{\lambda -2 } x^{\lambda -2 } z^2 \le \lambda(\lambda -1) 2^{3- \lambda}  x^{\lambda -2 } z^2,  \end{align}
since $p_{\lambda}''(x)= \lambda( \lambda-1) x^{\lambda-2 }$ is non-increasing for $x\ge 0$.
With these preliminary bounds, we can now estimate the first integral in~\eqref{eq:DeltaSplit} using also Assumption~\ref{ass:existence}. Indeed, we get
\[\begin{split}
\MoveEqLeft \iiint \d z \d x \d y\, \1_{[1+z,\infty)}(x)  (\Delta_z p_\lambda)(x) \kappa_n[c^n_s](x,y,z) \\
&\le \iiint \d z \d x \d y\, \1_{[1+z,\infty)}(x)\bra*{ \! (3z)^\lambda   \1_{[z,2z]}(x)+  \lambda(\lambda -1) 2^{2- \lambda} z^2    x^{\lambda -2 } \1_{[2z, \infty)}(x)\! }\kappa_n[c^n_s](x,y,z)\\
&\le   \int_{\mathrlap{\bR_+}}\mkern4mu \d z  \, (3z)^{\lambda}2^{\lambda}\check{z}^{\lambda} \varphi(z)\int_{\mathrlap{\bR_+}}\mkern4mu \d x \, c^n_s(x) \int_{\mathrlap{\bR_+}}\mkern4mu \d y \,  c^n_s(y) \check{y}^{\lambda}\hat{y}^{-\alpha} \\
&\quad +\lambda(\lambda -1) 2^{3- \lambda} \iiint \d z  \d x \d y \, z^2 x^{\lambda -2}  \1_{[2z ,\infty)}(x) \1_{[1+z,\infty)}(x)  \kappa_n[c^n_s](x,y,z)\\
&\le  6^{\lambda}\cdot  \|\varphi\|_{0,2\lambda}\Mom_0(c^n_s)  \Mom_{-\alpha,\lambda}(c^n_s)\\
 &\quad+  \lambda(\lambda -1) 2^{3- \lambda} \biggr( \int_0^n  \d z \,  z^2 \varphi(z) \int_1^n \d x \int_1^n \d y \, x^{\lambda -2 } (x^\mu y^{\nu} + x^{\nu} y^{\mu}) c^n_s(x) c^n_s(y) \\
 &\mkern160mu + \int_0^n \d z \, z^2  \varphi(z) \int_1^n \d x  \int_0^1 \d y \, \1_{[z,\infty)}(y) y^{-\alpha} x^{\lambda-2} (x^{\mu}+x^{\nu}) c^n_s(x) c^n_s(y)   \biggl).
\end{split}\]
In the above estimates, we used Assumption \ref{ass:global} (ii)  to restrict the estimate to the cases for $x > z$ and $y>z$.
In the last step, we split the integral according to whether $y\le 1$ or $y \ge 1$. For $y \ge 1$, we follow the method of~\cite[Lemma 3.2]{si2024existence}, adapted to the continuum setting; for the reader's convenience we record the elementary inequalities used. Assuming without loss of generality that $\mu\ge\nu$, so that $\lambda=\mu$, the conditions $\mu,\nu\in[0,2]$ and $\mu+\nu\le3$ give, for $x,y\ge1$,
\begin{equation}\label{eq:el-317}
x^{\lambda-2}(x^\mu y^\nu+x^\nu y^\mu) = x^{2\lambda-2}y^\nu + x^{\mu+\nu-2}y^\lambda \le x^{2\lambda-2}y^\nu + x y^\lambda \,.
\end{equation}
If $\nu\le1$, the right-hand side of~\eqref{eq:el-317} is in turn bounded by
\begin{equation}\label{eq:el-318}
x^{\lambda-2}(x^\mu y^\nu+x^\nu y^\mu) \le x^{\lambda} y + x y^\lambda,
\end{equation}
and the conservation of the first moment bounds the double integral below by $2\Mom_1(c_0)\Mom_\lambda(c^n_s)$. If instead $\nu>1$, inequality~\eqref{eq:el-317} yields
\begin{equation}\label{eq:el-320}
\int_1^n \d x\int_1^n \d y\, x^{\lambda-2}(x^\mu y^\nu+x^\nu y^\mu)\, c^n_s(x)c^n_s(y) \le \int_1^n \d x\int_1^n \d y\, \bigl(x^{2\lambda-2}y^\nu + x y^\lambda\bigr) c^n_s(x)c^n_s(y),
\end{equation}
and bounding the first term on the right by H\"older's inequality and interpolation between the first and the $\lambda$-th moment, as in~\cite[Lemma 3.2]{si2024existence}, we obtain in either case
\[ \int_1^n \d x \int_1^n \d y \, x^{\lambda -2 } (x^\mu y^{\nu} + x^{\nu} y^{\mu}) c^n_s(x) c^n_s(y) \le 2 C_L (1+2 \Mom_\lambda (c^n_s) ),\] with $C_L = \max\{\Mom_1(c_0)^{\frac{2-\min(\nu,\mu)}{\lambda -1}}, \Mom_1(c_0)\}$,
while since $2\lambda -2 \le \lambda$, for $y \le 1$, we have
\[\begin{split}
	\MoveEqLeft\int_0^n \d z \, z^2 \varphi(z) \int_1^n \d x  \int_0^1 \d y \, \1_{[z,\infty)}(y) y^{-\alpha} x^{\lambda-2} (x^{\mu}+x^{\nu}) c^n_s(x) c^n_s(y) \\
	& \le \int_0^n \d z z^{2-\alpha} \varphi(z) \int_1^n \d x \, x^{\lambda-2} (x^{\mu}+x^{\nu}) c^n_s(x)  \int_0^1 \d y \, c^n_s(y) \\
	&\le 2  \|\varphi\|_{0,2-\alpha} \Mom_{0}(c_s^n) \Mom_{\lambda}(c_s^n).
\end{split}\]
In the second integral, we can estimate by monotonicity
\[
	(\Delta_z p_{-\alpha})(x) \le p_{-\alpha}(x-z).
\]
so that, using \eqref{eq:alpha bound K}
\[\begin{split}
	\MoveEqLeft\iiint  \d z \d x \d y\, \1_{[z,1-z]}(x) (\Delta_z p_{-\alpha})(x)  \kappa_n[c^n_s](x,y, z)\\
&\le \iiint  \d z \d x \d y\,\1_{[z,1-z]}(x)  (x-z)^{-\alpha} \kappa_n[c^n_s](x,y, z) \\
&\le  C_\alpha \int_{\bR_+} \d z \,\varphi(z)  z^{-2\alpha}  \int_{\bR_+} \d x \,c^n_s(x)  \int_{\bR_+} \d y \, c^n_s(y)  \check{y}^{\lambda} \hat{y}^{-\alpha}\\
&\le C_\alpha \|\varphi\|_{0,-2\alpha}\Mom_0(c^n_s) \Mom_{-\alpha,\lambda}(c^n_s).
 \end{split}\]
For the third integral, we also use  \eqref{eq:alpha bound K} and get
\begin{equation*}
\begin{split}
\MoveEqLeft\iiint \d z \d x \d y\, \1_{[z \vee (1-z),1+z)}(x) \bra*{(1+2z)^{\lambda} +(x-z)^{-\alpha}} \kappa_n[c^n_s](x,y, z)  \\
&\le \int_{\bR_+} \d z \, \varphi(z)\int_{\bR_+} \d x\, \1_{[z,1+z]}(x) ((1+2z)^{\lambda}\hat{x}^{-\alpha}\check{x}^{\lambda}    + C_\alpha\hat{x}^{-2\alpha}\check{x}^{\lambda} )   c^n_s(x)  \int_{\bR_+} \d y \, c^n_s(y)  \check{y}^{\lambda} \hat{y}^{-\alpha} \\
&\le  C_{\alpha,\lambda}(\| \varphi\|_{0,-2\alpha}+\| \varphi\|_{-\alpha,\lambda}+\| \varphi\|_{0,2\lambda}) \Mom_0(c^n_s)   \Mom_{-\alpha,\lambda}(c^n_s)
\end{split}
\end{equation*}
Combining the cases, we have
\begin{align*}
\MoveEqLeft \iiint  \d z \d x \d y \, (\Delta_z h)(x)  \kappa_n[c^n_s](x,y, z)  \le  4\cdot 3^\lambda \|\varphi\|_{0,2\lambda}\Mom_0(c^n_s)  \Mom_{-\alpha,\lambda}(c^n_s)\\
 &+  \lambda(\lambda -1) 2^{3- \lambda} \biggr(2 \|\varphi\|_{0,2}  C_L (1+2 \Mom_\lambda (c^n_s) ) +2  \|\varphi\|_{0,2-\alpha} \Mom_{0}(c_s^n) \Mom_{\lambda}(c_s^n)  \biggl)\\
 &+ C_\alpha \|\varphi\|_{0,-2\alpha}\Mom_0(c^n_s) \Mom_{-\alpha,\lambda}(c^n_s) \\
& + C_{\alpha,\lambda}\bra*{ \|\varphi\|_{-2\alpha,2\lambda}} \Mom_0(c^n_s)   \Mom_{-\alpha,\lambda}(c^n_s).
\end{align*}
Finally, by using monotonicity of moment, that is $\Mom_0(c^n_s)=\Mom_0(c^n_0)\le \Mom_0(c_0)$ and $\Mom_{\lambda}(c^n_s) \le  \Mom_{-\alpha,\lambda}(c^n_s)$ as well as by Assumption~\ref{ass:existence} that $\varphi\in L^1_{-2\alpha,2\lambda}$, we conclude
\begin{align*}
\iiint& \d z \d x \d y\, (\Delta_z h)(x)  \kappa[c^n_s](x,y, z)  \le C_{\mu,\nu, \alpha,\varphi, c_0} \bra*{1+ \Mom_{-\alpha,\lambda}(c^n_s)}
\end{align*} for some constant $ C_{\mu,\nu, \alpha, \varphi, c_0}>0$.
Hence with Grönwall's inequality, we obtain the first statement
\[ \Mom_{-\alpha,\lambda}(c^n_s) \le  C_{\mu,\nu, \alpha, \varphi, c_0}t  \exp(   C_{\mu,\nu, \alpha,  \varphi, c_0} t ) \Mom_{-\alpha,\lambda}(c_0)  . \]
For the second statement, let $f \in L^{\infty}(\bR_+)$ and define $g(x) = f(x) x^{-\alpha}$, note that $(\Delta_z g)(x) \le 4 \|f\|_{L^\infty} (x-z)^{-\alpha}$ so that we use \eqref{eq:alpha bound K} from Assumption~\ref{ass:existence} to obtain
\begin{align*}
\MoveEqLeft \int_{1/n}^n  f(x) x^{-\alpha} (c^n_t( x) - c^n_s( x)) \d x = \int_s^t \d r \iiint \d z  \d x \d y\, (\Delta_z g)(x)  \kappa_n [c^n_r](x,y,z)\\
& \le  4 \|f\|_{L^\infty} \int_0^t \d s \iiint \d z  \d x \d y \,(x-z)^{-\alpha}\kappa_n [c^n_r](x,y,z)\\
&\le 4 C_\alpha \|f\|_{L^\infty}  \int_s^t \d r \int_0^n \d z \varphi(z) \hat{z}^{-\alpha}\int_0^n \d x \, \hat{x}^{-\alpha}\check{x}^{\lambda} c^n_r(x) \int_0^{n} \d y \hat{y}^{-\alpha}\check{y}^{\lambda}  c^n_r(y)\\
&\le 4 C_\alpha\|f\|_{L^\infty} \|\varphi\|_{-\alpha,0}  \int_s^t \d r (\Mom_{-\alpha,\lambda}(c^n_r) )^2 \\
&\le 4C_\alpha \|f\|_{L^\infty} \|\varphi\|_{-\alpha,0}  ( CT e^{CT})^2 (t-s).
\end{align*}
A similar argument using $(\Delta_z f)(x)\le 4 \|f\|_{L^\infty}$ shows that
\begin{equation}
	\int_{1/n}^n  f(x) (c^n_t( x) - c^n_s( x)) \d x   \le 4 \|f\|_{L^\infty} \|\varphi\|_{0}  \int_s^t \d r ( \Mom_{-\alpha,\lambda}(c^n_r))^2.
\end{equation}
Then the second statement follows by noting that the bound is uniform for functions with uniformly bounded $L^\infty$ norm and $\operatorname{sgn} (c^n_t - c^n_s)$ has $L^\infty$ norm $1$.
\end{proof}
The strategy to prove the existence of weak solution according to Definition~\ref{def:weaksolution} is to show weak $Y_{-\alpha,0}$ compactness of the truncated solution.
We combine techniques established for the (generalized continuous) exchange-driven growth model~\cite{barik2025continuous} with others from the Smoluchowski coagulation equation~\cite{laurenccot2014,barik2020mass}.
The established compactness will be upgraded to the space $Y_{-\alpha,\lambda}$. This means we need to show $(c^n(t))_{n > 1}$ is weakly compact in $L^1(\bR_+, \hat{x}^{-\alpha} \d x)$ for each $t\ge 0$ and
$(c^n)_{n>1}$ is weakly equicontinuous as a map in $C([0,T); L^{1}(\bR_+, \hat{x}^{-\alpha} \d x))$. By the Dunford-Pettis theorem \cite[Theorem 2.3, Proposition 2.6]{laurenccot2014}, a subset $\cF$ of $L^1(\bR_+, \hat{x}^{-\alpha}\d x)$ is weakly $L^1$ compact if and only if $\cF$ is uniformly integrable and uniformly tight.
We obtain the uniform integrability via the de la Vallée-Poussin theorem~\cite{DellacherieMeyer1982} and the uniform tightness via the boundedness of a higher moment.
\begin{definition}[De la Vallée-Poussin functions]
Define $\cC_{VP}\subset C^2(\bR_+)$ to be the set of  non-negative, convex functions such that for $\sigma\in \cC_{VP} $, it holds $\sigma(0)=\sigma'(0)=0$, $\sigma'$ is a concave function, $\sigma'(x)>0$ if $x>0$ and is superlinear, that is
\[
	\lim_{x\to \infty}\sigma'(x)=\lim_{x\to \infty}\frac{\sigma(x)}{x}=\infty \,.
\]
\end{definition}
\begin{remark}\label{rmk:CVPfunction}
As a consequence of the de la Vallée-Poussin theorem~\cite{DellacherieMeyer1982, laurenccot2014, barik2025continuous}, we have that for any initial datum $c_0\in Y_{0,\lambda}^+$, there exists $\sigma_{\lambda}\in \cC_{VP}$ such that
\begin{equation}\label{eq:CVPbound:initial}
	\int_{0}^\infty x^{\lambda-1} \sigma_{\lambda}(x) c_0(x) \d x<+\infty \,.
\end{equation}
Hence, by testing with the function $\sigma_\lambda$,
we obtain the propagation of the bound~\eqref{eq:CVPbound:initial} for later times.
\end{remark}
We collect some properties of de la Vallée-Poussin functions, the  proofs of which can be found in \cite[Proposition 2.14]{laurenccot2014}.
\begin{lemma}\label{lem:bound:sigma}
	Any $\sigma \in \mathcal{C}_{VP}$ satisfies for $x,r\geq 0$ the following inequalities
	\begin{subequations}
	\begin{align}
		0 &\le \sigma(x)\le x \sigma'(x) \le 2 \sigma(x), \label{eq:bound:sigma:1} \\
		0 &\le \sigma(rx)\le \max\{1,r^2\}\sigma(x), \\
		0 &\le x \sigma''(x)\le \sigma'(x) ,
	\end{align}
	\end{subequations}
	and
	\begin{align}\label{eq:bound:sigma:vartheta}
		x (\sigma'(y)-\sigma'(x)) \le \vartheta(y)-\vartheta(x),\quad \text{ with } \vartheta(x)= x \sigma'(x)- \sigma(x) \quad\text{ for $x,y \ge 0$.}
	\end{align}
\end{lemma}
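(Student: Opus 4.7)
Each of the four inequalities reduces to a short manipulation of the three defining properties of $\mathcal{C}_{VP}$: convexity of $\sigma$, concavity of $\sigma'$, and the vanishing boundary data $\sigma(0)=\sigma'(0)=0$. I would handle them in the order (a), (c), (b), (d), so that later parts can reuse earlier ones.

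For (a), non-negativity is by assumption. The upper bound $\sigma(x)\le x\sigma'(x)$ is the tangent-line inequality for the convex function $\sigma$ at the point $x$ evaluated at $0$, namely $\sigma(0)\ge \sigma(x)+\sigma'(x)(0-x)$, combined with $\sigma(0)=0$. For $x\sigma'(x)\le 2\sigma(x)$, I would invoke the concavity of $\sigma'$ together with $\sigma'(0)=0$ to obtain $\sigma'(s)\ge (s/x)\sigma'(x)$ for $s\in[0,x]$, and then integrate over $[0,x]$ to get $\sigma(x)\ge x\sigma'(x)/2$. For (c), the lower bound is convexity of $\sigma$, and the upper bound $x\sigma''(x)\le \sigma'(x)$ is the tangent-line inequality for the concave $\sigma'$ at the point $x$ evaluated at $0$, combined with $\sigma'(0)=0$.

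For (b), the case $r\le 1$ is monotonicity of $\sigma$ (which holds since $\sigma'\ge 0$). The case $r>1$ uses (c) in the equivalent form $(\sigma'(x)/x)'\le 0$, so that $\sigma'(x)/x$ is non-increasing on $(0,\infty)$; in particular $\sigma'(ru)\le r\sigma'(u)$ for all $u>0$ and $r\ge 1$. Substituting $s=ru$ in $\sigma(rx)=\int_0^{rx}\sigma'(s)\,ds$ then yields $\sigma(rx)=r\int_0^x\sigma'(ru)\,du\le r^2\sigma(x)$.

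For (d), I would first compute $\vartheta'(x)=\sigma'(x)+x\sigma''(x)-\sigma'(x)=x\sigma''(x)$. Writing both sides of the target inequality as integrals between $x$ and $y$, the difference $\vartheta(y)-\vartheta(x)-x(\sigma'(y)-\sigma'(x))$ becomes $\int_x^y(s-x)\sigma''(s)\,ds$, whose integrand has a sign that, combined with the orientation of the interval, is always non-negative, whether $y\ge x$ or $y<x$. The only subtle point in the whole lemma is the case $r>1$ of (b): a naive use of convexity produces $\sigma(rx)/r\ge\sigma(x)$, the wrong direction, so the bootstrap from (c) through the monotonicity of $\sigma'(x)/x$ is the essential trick.
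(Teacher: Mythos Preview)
Your proof is correct in every part. The paper does not actually prove this lemma; it merely cites external references (Laurençot's survey, and two other papers), so there is no approach to compare against for parts (a)--(c).

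For part (d), however, the paper does sketch an alternative argument later, inside the proof of Proposition~\ref{prop:UIbdd}: from convexity alone one has $\sigma(x)\ge \sigma(y)+\sigma'(y)(x-y)$, i.e.\ $(y-x)\sigma'(y)-\sigma(y)+\sigma(x)\ge 0$, and expanding $\vartheta(y)-\vartheta(x)-x(\sigma'(y)-\sigma'(x))$ gives exactly this expression. This is slightly cleaner than your route via $\int_x^y (s-x)\sigma''(s)\,ds$, since it avoids the second derivative and the case split on the sign of $y-x$; your argument is still valid because $\mathcal{C}_{VP}\subset C^2$ by definition, but the convexity-only version is worth knowing.
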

\begin{remark}
In fact, $\eqref{eq:bound:sigma:vartheta}$  is a direct consequence of the convexity of $\sigma$. Moreover, we will use that
\begin{equation}\label{eq:compare-theta-sigma}
0 \le \vartheta(x) \le \sigma(x) \text{ for } x \ge 0,\end{equation} which follows directly from $\eqref{eq:bound:sigma:1}$.
\end{remark}
Another technical tool is the product rule for the discrete Laplacian
\begin{equation}\label{eq:id:discretelaplace}
	\bigl(\Delta_z (fg)\bigr)(x)= (\Delta_z f)(x)\, g(x) + f(x+z) \partial_z^+ g(x) - f(x-z) \partial_z^-  g(x)
\end{equation}
where $\partial_z^+ g(x) = g(x+z) -g(x)$ and $\partial_z^- g(x) = g(x) -g(x-z)$.

Our arguments for uniform integrability are an extension of~\cite[Lemma 3.5]{barik2025continuous} to cope with the singularity of the kernel at zero.
The argument for uniform integrability needs to cope with the possible growth of the kernel at infinity as well as its singularity at zero.
Although the proof is quite technical, the main idea is to use a change of variable and integration by parts to apply the discrete Laplacian to $K_n$ to make use of the bound on the second derivatives~\eqref{ass:2.derivative,K} from Assumption~\ref{ass:existence}.
\begin{proposition}[Uniform integrability]\label{prop:UIbdd}  Assume $K$ satisfies Assumptions  \ref{ass:existence}.
For $c_0 \in Y_{\lambda}^+$ let $\sigma\in \cC_{VP}$ be such that
\[
\int_{0}^\infty   \sigma(\hat{x}^{-\alpha} c_0(x) )\d x<+\infty .
\]
Let $(c^n)_{n>1}$ solve the weak truncated equation~\eqref{eq:weak-truncated} starting from the truncated $c_0$.
Then for each $T\in (0,\infty)$, there exists $C(T)=C_{\mu,\nu,\alpha,\varphi,c_0,T}>0$ such that
\[\sup_{t \in [0,T]}\sup_{n}  \int_{1/n}^{n} \sigma(\hat{x}^{-\alpha} c^n_t(x)) \d x  \le C(T). \]
\end{proposition}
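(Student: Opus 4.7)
The plan is to propagate the initial uniform integrability captured by $\sigma$ to later times by deriving a Gronwall-type differential inequality for the functional $\Phi^n(t) := \int_0^\infty \sigma(u^n_t(x))\d x$ with $u^n_t(x) = \hat{x}^{-\alpha} c^n_t(x)$. First I would test the weak truncated form of Lemma~\ref{lem:weak-form} against $g(x) = \sigma'(u^n_t(x))\,\hat{x}^{-\alpha}$; this test function is bounded on $(1/n,n)$ since $c^n_t$ is $C^1$ and supported there, hence admissible. A chain-rule computation yields
\[\frac{d}{dt}\Phi^n(t) = \iiint_{D} (\Delta_z g)(x) K_n(x,y,z) c^n_t(x) c^n_t(y) \d x\d y\d z,\]
which, using the symmetry of $K_n$ in its first two arguments and the rewriting from Remark~2.6(i), I would recast in the natural exchange form
\[\iiint \bigl\{[g(x-z) - g(x)] + [g(y+z) - g(y)]\bigr\} K_n(x,y,z) c^n_t(x) c^n_t(y) \d x\d y\d z,\]
integrated over $\{x\ge z,\,y\ge 0,\,z\ge 0\}$. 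Writing $g = \sigma'(u^n_t)\,h$ with $h(x) = \hat{x}^{-\alpha}$, I would split each bracket into a principal part $h(\xi)[\sigma'(u^n_t(\xi\pm z)) - \sigma'(u^n_t(\xi))]$ and a correction $\sigma'(u^n_t(\xi\pm z))[h(\xi\pm z) - h(\xi)]$.

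For the principal part I would use that $c^n_t(\xi)h(\xi) = u^n_t(\xi)$ and apply Lemma~\ref{lem:bound:sigma}~\eqref{eq:bound:sigma:vartheta} to obtain $u^n_t(\xi)[\sigma'(u^n_t(\xi\pm z)) - \sigma'(u^n_t(\xi))] \le \vartheta(u^n_t(\xi\pm z)) - \vartheta(u^n_t(\xi))$, with $\vartheta(r)=r\sigma'(r)-\sigma(r)\le \sigma(r)$ by~\eqref{eq:bound:sigma:1}. Changes of variables $x-z\mapsto x$ and $y+z\mapsto y$ that reverse the shifts, combined with the symmetry of $K_n$ in $(x,y)$, telescope the two gain contributions into a single integral of the form
\[\iiint \vartheta(u^n_t(x)) c^n_t(y) \bigl(\Delta_z K_n(\cdot,y,z)\bigr)(x) \d x\d y\d z\]
plus boundary contributions where $x<z$. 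On $\{x\ge z\}$ the discrete Laplacian of $K_n$ in its first argument is bounded by $z^2 \hat{y}^{-\alpha}\check{y}^{\lambda}\varphi(z)$ thanks to~\eqref{ass:2.derivative,K}, while the boundary region is controlled by the pointwise bound~\eqref{eq:generalbdd}. Combining with $\vartheta\le\sigma$ and Lemma~\ref{lem:prop-lambda-moment} (providing a uniform-in-$n$ bound for $\Mom_{-\alpha,\lambda}(c^n_t)$) gives a control of the principal contribution by $C(T)(1+\Phi^n(t))$.

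For the correction terms I would use that $h$ is non-increasing, so the receiver's correction satisfies $h(y+z)-h(y)\le 0$ and contributes non-positively, hence can be discarded. The remaining donor correction, $\sigma'(u^n_t(x-z))[h(x-z)-h(x)]c^n_t(x)c^n_t(y)K_n$, is supported on $\{x-z\le 1\}$ and bounded by $\sigma'(u^n_t(x-z))\widehat{(x-z)}^{-\alpha}c^n_t(x)c^n_t(y)K_n$. Here the role of hypothesis~\eqref{eq:alpha bound K} becomes visible: it absorbs the bad weight $\widehat{(x-z)}^{-\alpha}$ together with $K_n$ into $C_\alpha \hat{x}^{-2\alpha}\hat{y}^{-\alpha}\check{x}^{\lambda}\check{y}^{\lambda}\varphi(z)$. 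After changing variables $x-z\mapsto x$ and using $c^n_t(x+z)\le u^n_t(x+z)$, I would apply Lemma~\ref{lem:bound:sigma}~\eqref{eq:bound:sigma:vartheta} in the reverse direction together with $r\sigma'(r)\le 2\sigma(r)$ to obtain $\sigma'(u^n_t(x))c^n_t(x+z)\le 2\sigma(u^n_t(x+z)) + \vartheta(u^n_t(x))$. Both resulting factors integrate against a bounded weighted version of $\varphi$ (which lies in $L^1$ by $\varphi\in Y^+_{-2\alpha,2\lambda}$) and against $\Mom_{-\alpha,\lambda}(c^n_t)$, again producing a bound $C(T)(1+\Phi^n(t))$.

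Putting everything together yields $\frac{d}{dt}\Phi^n(t) \le C(T)(1+\Phi^n(t))$ uniformly in $n$, after which Gronwall's lemma closes the argument with $C(T)$ depending only on $T$, the initial data, and the constants in Assumption~\ref{ass:existence}. The hard part will be the donor correction: balancing the singularity of $h$ at $0$, the potentially unbounded $\sigma'$, and the $z$-dependence of $K_n$ requires both directions of the key inequality~\eqref{eq:bound:sigma:vartheta} and a careful use of~\eqref{eq:alpha bound K}, while checking that every $z$-weight produced lies in $L^1(\varphi\d z)$ is what forces the integrability assumption $\varphi\in Y^+_{-2\alpha,2\lambda}$.
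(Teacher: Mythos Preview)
Your proposal is correct and mirrors the paper's proof: test with $g=\sigma'(u^n_t)\,\hat p_{-\alpha}$, use the $\vartheta$-inequality~\eqref{eq:bound:sigma:vartheta} to pass the principal part to $\int\vartheta(u^n_t)\,\Delta_z K_n$ and control it via~\eqref{ass:2.derivative,K}, discard the sign-favourable receiver correction, absorb the singular weight in the donor correction through~\eqref{eq:alpha bound K}, and close with Gronwall using Lemma~\ref{lem:prop-lambda-moment}. The only organizational difference is that the paper stays in the symmetric $\Delta_z$-form and, for the donor correction, first adds and subtracts $\sigma'(u^n_t(x))$ so that its term~(III) already carries $\sigma(u^n_t(x))$ via $r\sigma'(r)\le 2\sigma(r)$; your route instead shifts variables first and invokes~\eqref{eq:bound:sigma:vartheta} a second time, which is equally valid and produces the same $z$-weights in $Y_{-2\alpha,2\lambda}^+$.
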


\begin{proof}
	From the weak formulation \eqref{eq:weak-truncated}, we have
\begin{align*}\frac{\d}{\d t} \int_{1/n}^n \sigma(\hat{x}^{-\alpha}c^n_t(x) )\d x
	&= \int^n_{1/n} \sigma'(\hat{x}^{-\alpha}  c^n_t(x)) \hat{x}^{-\alpha} \frac{\d}{\d t} c^n_t(x) \d x\\
& = \iiint \d z \d x \d y\, \bigl(\Delta_z(\sigma'(u^n_t) \hat{p}_{-\alpha})\bigr)(x) \kappa_n [c^n_t](x,y,z)
\end{align*} where $\hat{p}_{-\alpha}(x) = \hat{x}^{-\alpha}$ and $u^n_t(x) = \hat{p}_{-\alpha}(x) c^n_t(x) $.
At this point, we use the crucial inequality~\eqref{eq:bound:sigma:vartheta} to obtain the elementary bound
\[
u^n_t(x) \bigl(\Delta_z(\sigma'(u^n_t))\bigr)(x)\le\bigl(\Delta_z(\vartheta(u^n_t) )\bigr)(x) \,.
\]
Using that $\hat{p}_{-\alpha}$ is non-increasing and $\sigma' \ge 0$, we can drop the second term in the next line, use the above estimate, the definition of $\vartheta$,
estimate~\eqref{eq:bound:sigma:1} and ~\eqref{eq:bound:sigma:vartheta} from Lemma~\ref{lem:bound:sigma} to bound
\[\begin{split}
&\begin{multlined}[\textwidth]
	c^n_t(x) \bigl(\Delta_z(\sigma'(u^n_t) \hat{p}_{-\alpha})\bigr)(x)=c^n_t(x)\Bigl( \bigl(\Delta_z(\sigma'(u^n_t))\bigr)(x)  \hat{p}_{-\alpha}(x) \\
	+ \sigma'(u^n_t)(x+z) ( \hat{p}_{-\alpha}(x+z)-\hat{p}_{-\alpha}(x))
	 + \sigma'(u^n_t)(x-z) (\hat{p}_{-\alpha}(x-z)-\hat{p}_{-\alpha}(x))\Bigr)
\end{multlined}\\
&\begin{multlined}[\textwidth]
	\le c^n_t(x)\Bigl( (u^n_t(x))^{-1} \hat{p}_{-\alpha}(x) \bigl(\Delta_z(\vartheta(u^n_t) )\bigr)(x) + \bigl(\sigma'(u^n_t)(x-z) \\
	-  \sigma'(u^n_t)(x)\bigr)\bigl(\hat{p}_{-\alpha}(x-z)-\hat{p}_{-\alpha}(x)\bigr)
+\sigma'(u^n_t)(x)\bigl(\hat{p}_{-\alpha}(x-z)-\hat{p}_{-\alpha}(x)\bigr)\Bigr)
\end{multlined}\\
&\begin{multlined}
	\le \bigl(\Delta_z(\vartheta(u^n_t) )\bigr)(x) +c^n_t(x) (u^n_t(x))^{-1} \bigl(\vartheta(u^n_t(x-z))-\vartheta(u^n_t(x))\bigr) \bigl(\hat{p}_{-\alpha}(x-z)-\hat{p}_{-\alpha}(x)\bigr) \\
+ 2c^n_t(x)(u^n_t(x))^{-1}  \sigma(u^n_t)(x)\bigl(\hat{p}_{-\alpha}(x-z)-\hat{p}_{-\alpha}(x)\bigr)
\end{multlined}\\
&\begin{multlined}
	\le  \bigl(\Delta_z(\vartheta(u^n_t) )\bigr)(x) +  \bigl(\vartheta(u^n_t(x-z))-\vartheta(u^n_t(x))\bigr) \bra*{\frac{\hat{p}_{-\alpha}(x-z)}{\hat{p}_{-\alpha}(x)}-1} \\
+ 2   \sigma(u^n_t)(x)\bra*{\frac{\hat{p}_{-\alpha}(x-z)}{\hat{p}_{-\alpha}(x)}}\1_{ [0,1] }(x-z).
\end{multlined}
\end{split}
\]
 By defining $g_{\alpha}(x,z):=\bra*{\frac{\hat{p}_{-\alpha}(x-z)}{\hat{p}_{-\alpha}(x)}-1}$, we have the splitting into
 \begin{align}\frac{\d}{\d t} \int_{1/n}^n \sigma(\hat{x}^{-\alpha} c^n_t(x) )\d x
& \le  \iiint  \d z \d x \d y \biggl(  \bigl(\Delta_z(\vartheta(u^n_t))\bigr)(x)  \tag{I}\label{eq:secondorder,sigma}
\\
&\quad+  \bigl(\vartheta(u^n_t(x-z))-\vartheta(u^n_t(x))\bigr) g_{\alpha}(x,z)\label{eq:firstorder,sigma}\tag{II}\\
&\quad +  2   \sigma(u^n_t)(x) \frac{\hat{p}_{-\alpha}(x-z)}{\hat{p}_{-\alpha}(x)} \1_{ [0,1] }(x-z) \biggr)K_n(x,y,z) c^n_t(y) .\tag{III}\label{eq:zerothorder,sigma}\end{align}
The integral~\eqref{eq:secondorder,sigma} is bounded using the assumption~\eqref{ass:2.derivative,K} by
\begin{equation}\begin{split}
\MoveEqLeft \iiint \d z \d x \d y  \, c^n_t(y)   \vartheta(u^n_t(x)) \bigl(\Delta_z( K_n(\cdot,y,z))\bigr)(x)\\
&\le \iiint \d z \d x \d y  \, c^n_t(y)   \vartheta(u^n_t(x)) z^2 \|\partial_1^2 K_n(\cdot ,y,z)\|_{L^\infty} \\
&\le  \int_{\bR_+} \d z \,  z^2 \varphi(z) \int_{\bR_+} \d y \, \hat{y}^{-\alpha}\check{y}^{\lambda} c^n_t(y)    \int_{1/n}^n \d x\,   \vartheta(u^n_t(x)) \,.
\end{split}\end{equation}
In the integral \eqref{eq:firstorder,sigma}, we can drop since $\vartheta(x)\ge 0$ a negative term and change the variable
\[ \begin{split}
	\MoveEqLeft \iiint  \d z \d x \d y\, \bigl(\vartheta(u^n_t(x-z))-\vartheta(u^n_t(x))\bigr) g_{\alpha}(x,z) K_n(x,y,z) c^n_t(y) \\
& \le \iiint \d z \d x \d y \, \vartheta(u^n_t(x)) g_{\alpha}(x+z,z) K_n(x+z,y,z) c^n_t(y).\end{split}\]
Since for $x \ge 1$, it holds $g_{\alpha}(x+z,z)=0$, we only have to consider the case $x\le 1$, $x+z \le n$ and estimate
 \[\begin{split}
 g_{\alpha}(x+z,z)  K_n(x+z,y,z)
 &= \bigl(\hat{p}_{-\alpha}(x) - \hat{p}_{-\alpha}(x+z)\bigr)\hat{p}_{\alpha}(x+z) K_n(x+z,y,z)\\
 &\le \hat{p}_{-\alpha}(x) \hat{p}_{\alpha}(x+z) K_n(x+z,y,z)\\
 &\le \widecheck{(x+z)}^{\lambda} \hat{p}_{-\alpha}(z)   \hat{y}^{-\alpha} \check{y}^{\lambda} \varphi(z)\\
 &\le 2^\lambda  z^{-\alpha}   \hat{y}^{-\alpha} \check{y}^{\lambda} \varphi(z) \,, \end{split}\]
 where in the last inequality, we used $\widecheck{2x} \le 2 \check{x}$ and  Assumption~\ref{ass:global}~(ii).
Together, we can estimate the integral~\eqref{eq:firstorder,sigma} by
 \begin{equation*}\begin{split}
\MoveEqLeft \iiint \d z \d x \d y \, \vartheta(u^n_t(x)) g_{\alpha}(x+z,z) K_n(x+z,y,z) c^n_t(y)\\
&\le  2\iiint \d z \d x \d y \, \1_{[0,1]}(x) \vartheta(u^n_t(x))  z^{-\alpha} \varphi(z) \hat{y}^{-\alpha}\check{y}^\lambda   c^n_t(y)\\
&\le  2 \int_{\bR_+} \d z \, z^{-\alpha}\varphi(z)\int_{\bR_+} \d y \, \hat{y}^{-\alpha} \check{y}^\lambda  c^n_t(y) \int_{1/n}^n \d x \, \vartheta(u^n_t(x)).
\end{split}\end{equation*}
The last integral~\eqref{eq:zerothorder,sigma} is estimated using \eqref{eq:alpha bound K} from Assumption~\ref{ass:existence} by
 \begin{equation*}\begin{split}
 \MoveEqLeft \iiint  \d z \d x \d y\, 2   \sigma(u^n_t)(x)\hat{p}_{-\alpha}(x-z)(\hat{p}_{-\alpha}(x))^{-1} \1_{ [0,1] }(x-z) K_n(x,y,z) c^n_t(y) \\
 &\le 2 C_{\alpha} \iiint \d z \d x \d y\,  \varphi(z)   \sigma(u^n_t)(x) \hat{x}^{-\alpha}\check{x}^{\lambda} \1_{ [0,1] }(x-z) \1_{[1/n,n]}(x) \hat{y}^{-\alpha}\check{y}^{\lambda}c^n_t(y)\\
&\le 2 C_{\alpha}\iiint \d z \d x \d y\, (1+z)^{\lambda} \varphi(z)  z^{-\alpha} \sigma(u^n_t)(x) \1_{[1/n,n]}(x) \hat{y}^{-\alpha}(\check{y}^{\lambda}) c^n_t(y) \\
&\le 2^{1+\lambda} C_{\alpha} \int_{\bR_+} \d z \, (z^{-\alpha}+z^{\lambda-\alpha}) \varphi(z) \int_{\bR_+}\d y \, \hat{y}^{-\alpha}\check{y}^{\lambda} c^n_t(y) \int_{1/n}^n \d x \, \sigma(u^n_t)(x) .
\end{split}\end{equation*}
Recalling \eqref{eq:compare-theta-sigma}, we combine the considerations above to finally conclude
\begin{align*}
\frac{\d}{\d t} \int_{1/n}^n \sigma(\hat{x}^{-\alpha} c^n_t(x) )\d x
& \le  C_{\alpha,\lambda} \Mom_{-\alpha,\lambda}(c_t^n) ( \|\varphi\|_{-\alpha,\lambda-\alpha}+ \|\varphi\|_{0,2}+ \|\varphi\|_{0,-\alpha}) \int_{1/n}^n  \sigma(u^n_t(x)) \d x.
\end{align*}
Hence by  Lemma~\ref{lem:prop-lambda-moment} and Grönwall inequality, we have the claim.
\end{proof}

\begin{remark}
 We note that for any initial datum $c_0\in Y_{-\alpha}^+$, there exists $\sigma\in \cC_{VP}$ such that
\begin{equation}
	\int_{0}^\infty  \sigma(x^{-\alpha} c_0(x)) \d x<+\infty \,.
\end{equation} Therefore, the assumption of Proposition \ref{prop:UIbdd} holds for $c_0\in Y_{-\alpha,\lambda}^+$.
\end{remark}

Now we turn to the boundedness of a higher moment, which guarantees tightness for the solutions.
\begin{proposition}[Boundedness of higher moments]\label{prop:higher-moment-bdd} Let $T\in (0,\infty)$. Assume $K$ satisfies Assumptions \ref{ass:existence}.
Let $c_0\in Y_{-\alpha,\lambda}^+$ and $\sigma_{\lambda}\in \cC_{VP}$ be such that~\eqref{eq:CVPbound:initial} from Remark \ref{rmk:CVPfunction} holds, then there exists $C(T)=C_{\mu,\nu,\alpha,\varphi,c_0,T}>0$ such that all $t\in[0,T]$ it holds
\[\int_{1/n}^n x^{\lambda-1} \sigma_{\lambda}(x) c_{t}^n(x) \d x\le C(T).  \]
\end{proposition}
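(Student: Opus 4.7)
The plan is to test the weak truncated equation~\eqref{eq:weak-truncated} with the test function $h(x):=x^{\lambda-1}\sigma_{\lambda}(x)$ and derive a Gronwall-type inequality of the form
\[
\partial_t\int_{1/n}^n h(x)\,c^n_t(x)\,\d x \le A(t)\int_{1/n}^n h(x)\,c^n_t(x)\,\d x + B(t),
\]
with $A,B\in L^\infty_{\loc}([0,\infty))$ thanks to the $(-\alpha,\lambda)$-moment bound from Lemma~\ref{lem:prop-lambda-moment}. Since both $x^{\lambda-1}c_0(x)$ and $z^{2\lambda-1}\varphi(z)$ lie in $L^1(\bR_+)$, the de la Vall\'ee-Poussin theorem allows us to refine the choice of $\sigma_\lambda\in\cC_{VP}$ so that, jointly,
\[
\int_0^\infty x^{\lambda-1}\sigma_\lambda(x)c_0(x)\,\d x<+\infty \quad\text{and}\quad \int_0^\infty z^{2\lambda-1}\sigma_\lambda(z)\varphi(z)\,\d z<+\infty,
\]
which is possible by passing if necessary to a slowly enough growing common function in $\cC_{VP}$.

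The main work is the regionwise estimation of $(\Delta_z h)(x)$, following the splitting~\eqref{eq:DeltaSplit} of Lemma~\ref{lem:prop-lambda-moment}. On the bulk region $\{x-z\ge 1,\ z\le x/2\}$, a second-order Taylor expansion combined with
\[
h''(x)\le 2(\lambda-1)x^{\lambda-2}\sigma_\lambda'(x)+x^{\lambda-1}\sigma_\lambda''(x)\le(2\lambda-1)x^{\lambda-2}\sigma_\lambda'(x),
\]
which uses $x\sigma_\lambda''(x)\le\sigma_\lambda'(x)$ from Lemma~\ref{lem:bound:sigma}, and the concavity of $\sigma_\lambda'$ (yielding $\sigma_\lambda'(\xi)\le (3/2)\sigma_\lambda'(x)$ for $\xi\in[x-z,x+z]$ when $z\le x/2$) produces $(\Delta_z h)(x)\le C_\lambda z^2 x^{\lambda-2}\sigma_\lambda'(x)$. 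Pairing this with $K_n(x,y,z)\le\check x^\lambda\check y^\lambda\varphi(z)$ and invoking the crucial algebraic inequality
\[
x^{2\lambda-2}\sigma_\lambda'(x)\le 2 x^{\lambda-1}\sigma_\lambda(x)=2 h(x)\quad\text{for $x\ge 1$ and $\lambda\in[1,2]$},
\]
which follows from $x\sigma_\lambda'(x)\le 2\sigma_\lambda(x)$ together with $x^{2\lambda-3}\le x^{\lambda-1}$, produces exactly the self-referential term $\int h(x)c^n_t(x)\,\d x$ multiplied by $\|\varphi\|_{0,2}\Mom_{-\alpha,\lambda}(c^n_t)$, which constitutes the $A(t)$ contribution.

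On the range $\{x/2\le z\le x\}$, one uses the crude bound $(\Delta_z h)(x)\le h(x+z)\le 4^\lambda z^{\lambda-1}\sigma_\lambda(z)$ together with $\check x\le 2z$ and conservation of $\Mom_0$, to obtain a contribution dominated by $C\int_0^\infty z^{2\lambda-1}\sigma_\lambda(z)\varphi(z)\,\d z\cdot\Mom_0(c_0)\Mom_{-\alpha,\lambda}(c^n_t)$, which is finite by the joint integrability chosen above and therefore absorbed into $B(t)$. In the singular and transition regions $\{x+z\le 1\}$ and $\{|x-1|<z\}$, $h$ and its first two derivatives are uniformly bounded on $[0,2]$, so $(\Delta_z h)(x)$ is dominated by a constant (plus $(x-z)^{-\alpha}$ when $\alpha>0$), and~\eqref{eq:alpha bound K} together with Assumption~\ref{ass:existence} handle the singular piece exactly as in the corresponding estimates of Lemma~\ref{lem:prop-lambda-moment}, again contributing to $B(t)$. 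The main obstacle is the bulk step: absorbing the Taylor-generated factor $x^{2\lambda-2}\sigma_\lambda'(x)$ back into $h(x)$, which is the one place where the restriction $\lambda\le 2$ is essential and where the fine structure of $\sigma_\lambda\in\cC_{VP}$ (the interplay between superlinearity of $\sigma_\lambda$ and concavity of $\sigma_\lambda'$) is used in a decisive way.
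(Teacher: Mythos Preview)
Your overall strategy is sound and close to the paper's, but there are two points worth flagging.

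\textbf{The refinement of $\sigma_\lambda$ changes the statement.} The proposition is formulated for an \emph{arbitrary} $\sigma_\lambda\in\cC_{VP}$ satisfying~\eqref{eq:CVPbound:initial}; by passing to a more slowly growing de la Vall\'ee-Poussin function you prove a weaker statement (existence of \emph{some} $\sigma_\lambda$ for which the bound holds). That weaker statement suffices for the tightness argument in Proposition~\ref{prop:convergence}, so nothing breaks downstream, but strictly speaking you have not proved the proposition as written. The extra integrability $\int z^{2\lambda-1}\sigma_\lambda(z)\varphi(z)\,\d z<\infty$ is in fact avoidable: on the region $\{x/2\le z\le x\}$ you can instead bound $(\Delta_z h)(x)\le h(x+z)\le h(2x)\le C\,h(x)$ (using $\sigma_\lambda(2x)\le 4\sigma_\lambda(x)$), pair $\check x^\lambda\le(2z)^\lambda$ with the $z$-integral, and absorb the result into the $A(t)$ term rather than $B(t)$, needing only $\|\varphi\|_{0,\lambda}$.

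\textbf{Comparison with the paper.} The paper does not split off the bulk region via a direct Taylor bound at $x$. Instead it establishes the \emph{uniform} estimate $\Delta_z h(x)\le C_\lambda z^\lambda\sigma_\lambda'(x+z)$ on the whole domain $\{x\ge z\}$ (by estimating $z\int_{x-z}^{x+z}h''(y)\,\d y$ in the two cases $z\lessgtr x/2$), and then transfers the argument back to $x$ through the single inequality
\[
\sigma_\lambda'(x+z)\le \frac{2\sigma_\lambda(x+z)}{x+z}\le \frac{10}{x+z}\,\sigma_\lambda(x)\qquad(\text{since }(x+z)/x\le 2).
\]
Combined with $\check x^\lambda/(x+z)\le x^{\lambda-1}$ for $x\ge1$, this directly recovers $h(x)$ and closes the Gronwall loop uniformly, without any case-by-case splitting of large-$z$ contributions and with no extra assumption on $\sigma_\lambda$ or $\varphi$. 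Your bulk estimate $x^{2\lambda-2}\sigma_\lambda'(x)\le 2h(x)$ is a nice alternative for $z\le x/2$, and it makes the use of $\lambda\le2$ equally explicit; the cost is that it does not extend to $z\ge x/2$, which is why your argument needed the additional device.
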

\begin{proof}
Let $h(x)=x^{\lambda-1}\sigma_{\lambda}(x)$. Then, by the weak truncated form~\eqref{eq:weak-truncated}, we have
\begin{align*}\frac{\d}{\d t}& \int_{1/n}^n h(x) c^n_t (x)\d x = \int^n_{1/n}  h(x) \frac{\d}{\d t} c^n_t(x) \d x = \iiint\d z \d x \d y\, (\Delta_z h)(x) \kappa_n [c^n_t](x,y,z).
\end{align*}
By the properties of the function $\sigma_\lambda\in \cC_{VP}$ from \cite[Lemma 3.4]{si2024existence}, we get that $h'$ is increasing so that
\[\begin{split}
(\Delta_z h)(x)& = \int^{x+z}_x h'(y) \d y - \int^x_{x-z} h'(y) \d y
\le z \bigl( h'(x+z)-h'(x-z)\bigr)=z \int_{x-z}^{x+z} h''(y) \d y\\
&=z \int_{x-z}^{x+z}  \Bigl[ (\lambda-1)(\lambda-2) y^{\lambda-3} \sigma_{\lambda}(y) + 2 (\lambda-1) y^{\lambda -2} \sigma_{\lambda}' (y) + y^{\lambda-1}\sigma_{\lambda}''(y) \Bigr] \d y\\
&\le z  (2\lambda-1) \int_{x-z}^{x+z} y^{\lambda-2}  \sigma_{\lambda}' (y) \d y,
\end{split}\]
where we used  $(\lambda-1)(\lambda-2) \le 0$ for $\lambda \in (1,2]$.
We use the convexity of $\sigma_\lambda$ and the sublinearity of $z \mapsto z^{\lambda -1}$ to estimate
\begin{equation*}\begin{split}\int_{x-z}^{x+z}   y^{\lambda-2}  \sigma_{\lambda}' (y) \d y &\le  \sigma_{\lambda}' (x+z)  \int_{x-z}^{x+z}y^{\lambda-2} \d y  \le \frac{(x-z+2z)^{\lambda -1} - (x-z)^{\lambda -1 }}{\lambda -1 }\sigma_{\lambda}'(x+z) \\
&\le \frac{2^{\lambda -1}}{\lambda -1} z^{\lambda -1} \sigma_{\lambda}'(x+z) .
\end{split}\end{equation*}
Therefore, it holds
\[(\Delta_z h)(x) \le C_{\lambda}  \sigma_{\lambda}'(x+z)  z^{\lambda} \quad\text{ with }  C_{\lambda}= \frac{ 2^{\lambda-1} \lambda^2}{\lambda-1} . \]
Since $\sigma_{\lambda}\in \cC_{VP}$, we have  \[\frac12 \sigma_{\lambda}'(x+z)\le \frac{ \sigma_{\lambda}(x+z)}{x+z}=  \sigma_{\lambda}\bra*{\frac{x+z}{x} x}\frac{1}{x+z}\le\bra*{1+\bra*{\frac{x+z}{x}}^2}\frac{ 1}{x+z} \sigma_{\lambda}(x)\le 5\frac{1}{x+z}\sigma_{\lambda}(x)\] for $0 \le z \le x$.
For $x\le 1, z\le x$, we estimate as follows
\[\sigma_{\lambda}'(x+z)\le \sigma_\lambda'(1+z)\le \frac{2}{1+z} \sigma_\lambda(1+z)\le 2 (1+z) \sigma_\lambda(1).\] With the case separation on $x\ge 1$ and $x\le 1$, we get
\[ \begin{split}
\MoveEqLeft\begin{multlined}
		 \iiint \d z \d x \d y\, (\Delta_z h)(x) \kappa_n [c^n_t](x,y,z)\le  C_{\lambda}  \iiint\d z \d x \d y\, \1_{[1,\infty)}(x) \sigma_{\lambda}'(x+z)  z^{\lambda} \kappa_n[c^n_t](x,y,z) \\
+ 2 \sigma_\lambda(1)  \iiint \d z \d x \d y\, \1_{[0,1]}(x) (1+z)  z^\lambda \kappa_n[c^n_t](x,y,z)
\end{multlined}\\
&\begin{multlined} \leq 10 C_{\lambda}  \iiint\d z \d x \d y\,  \1_{[1,\infty)}(x) \frac{1}{x+z} \sigma_{\lambda}(x)  z^{\lambda} \kappa_n [c^n_t](x,y,z)\\
+ 2 \bigl( \|\varphi\|_{0,\lambda-\alpha+1}+ \|\varphi\|_{0,\lambda-\alpha}\bigr)\sigma_\lambda(1) \int_{0}^{1}\d x  c^n_t(x) \int_{\bR_+}\d y\, \hat{y}^{-\alpha}  \check{y}^{\lambda}  c^n_t(y)
\end{multlined}\\
&\begin{multlined} \le 10 C_{\lambda} \|\varphi\|_{0,\lambda} \Mom_{-\alpha,\lambda}(c^n_t) \int_{\bR_+} x^{\lambda-1} \sigma_{\lambda}(x)  c^n_t(x)  \d x \\
+ 4 \|\varphi\|_{0,\lambda-\alpha+1}\sigma_\lambda(1) \Mom_0(c^n_0) \Mom_{-\alpha,\lambda}(c^n_t).
\end{multlined}
\end{split}
\]
By Lemma \ref{lem:prop-lambda-moment}, we have the uniform bound for  $\Mom_{-\alpha,\lambda}(c^n_t)$ for  $t\le T$. Hence, we conclude
\begin{align*}\frac{\d}{\d t}& \int_{1/n}^n h(x) c^n_t (x)\d x \le C_{\mu,\nu,\alpha,\varphi,c_0}(T) \biggl(1+\int_{1/n}^n h(x) c^n_t(x) \d x\biggr)\end{align*}
and the claim follows from Grönwall's inequality.
\end{proof}
\begin{proposition}[$Y_\lambda$-weak subsequence convergence to strongly $L^1$ continuous limit]\label{prop:convergence}Assume $K$ satisfies Assumptions  \ref{ass:existence}.  Let $T\in (0,\infty)$.   We have
$c^n \to c$ in $C([0,T], w\mhyphen Y_{-\alpha,\lambda} )$ along a subsequence and $c \in C([0,T];L^1(\bR_+,\hat{x}^{-\alpha}\d x))$.
\end{proposition}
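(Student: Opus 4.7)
The plan is a variant of the Arzel\`a-Ascoli argument in the non-metrizable weak topology, exploiting that bounded subsets of $Y_{-\alpha,\lambda}$ are metrizable in $w\text{-}Y_{-\alpha,\lambda}$ via duality with a countable dense subset of $L^\infty(\bR_+)$.

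\textbf{Step 1 (Pointwise weak compactness).} Fix $t\in[0,T]$. I would show that $\{c^n_t\}_n$ is weakly relatively compact in $Y_{-\alpha,\lambda}$ by applying the Dunford--Pettis theorem to the equivalent family $\{(\hat{x}^{-\alpha}+\check{x}^\lambda)c_t^n\}_n$ in $L^1(\bR_+,\dx{x})$. Uniform $L^1$-boundedness follows from Lemma~\ref{lem:prop-lambda-moment}. Uniform integrability at zero and on bounded $x$-sets is delivered by Proposition~\ref{prop:UIbdd} via the de la Vall\'ee-Poussin criterion with the function $\sigma$, since $\hat{x}^{-\alpha}c_t^n$ then inherits uniform integrability in $L^1(\dx{x})$. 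Tightness at infinity follows from Proposition~\ref{prop:higher-moment-bdd}: on $\{x\ge R\}$ one bounds $\check{x}^\lambda c_t^n\le (x/\sigma_\lambda(x))\cdot x^{\lambda-1}\sigma_\lambda(x)c_t^n$ and uses superlinearity of $\sigma_\lambda$ to send the prefactor to $0$.

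\textbf{Step 2 (Equicontinuity in time).} For $f\in L^\infty(\bR_+)$ and $g(x):=(\hat x^{-\alpha}+\check x^\lambda)f(x)$, I want to control $\sup_n|\langle c^n_t-c^n_s,f\rangle_{Y_{-\alpha,\lambda}}|=\sup_n|\int g\, (c^n_t-c^n_s)\dx{x}|$. Split $g=g\1_{[\delta,R]}+g\1_{[0,\delta)\cup(R,\infty)}$. The tail contribution is handled uniformly in $t,s,n$ by the uniform tightness of Step~1 (taking $\delta$ small and $R$ large). For the middle, for $n$ large enough that $(\delta,R)\subset(1/n,n)$ estimate \eqref{eq:equicontinuity} from Lemma~\ref{lem:prop-lambda-moment} gives
\[
	\Big|\int_\delta^R g(x)(c_t^n-c_s^n)\dx{x}\Big|\le \|g\1_{[\delta,R]}\|_\infty\cdot C|t-s|.
\]

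\textbf{Step 3 (Diagonal extraction).} Choose a countable dense subset $\{f_j\}\subset L^\infty(\bR_+)$ that metrizes weak convergence on $Y_{-\alpha,\lambda}$-bounded sets. By Step~1 and a diagonal argument over the countable set $(\bQ\cap[0,T])\times\{f_j\}$, extract a subsequence (still denoted $c^n$) with $\langle c^n_r,f_j\rangle\to\ell(r,j)$ for all $r\in\bQ\cap[0,T]$ and $j\in\bN$. Step~2 upgrades pointwise convergence on $\bQ$ to uniform convergence on $[0,T]$ for each $f_j$ and produces the limit map $t\mapsto c_t\in Y_{-\alpha,\lambda}^+$ with $c^n\to c$ in $C([0,T];w\text{-}Y_{-\alpha,\lambda})$; the limit sits in the same closed bounded set thanks to weak lower semicontinuity of the $Y_{-\alpha,\lambda}$-norm.

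\textbf{Step 4 (Strong continuity of the limit).} Since $\hat x^{-\alpha}\le \hat x^{-\alpha}+\check x^\lambda$, for every $g\in L^\infty(\bR_+)$ the function $g\hat x^{-\alpha}$ is admissible as a test function in $w\text{-}Y_{-\alpha,\lambda}$. Weak convergence and \eqref{eq:equicontinuity} then give for $n_k$ large enough
\[
	\Big|\int g\,\hat x^{-\alpha}(c_t-c_s)\dx{x}\Big|=\lim_{k\to\infty}\Big|\int g\,\hat x^{-\alpha}(c^{n_k}_t-c^{n_k}_s)\dx{x}\Big|\le C\,\|g\|_\infty|t-s|.
\]
Taking the supremum over $\|g\|_\infty\le 1$ yields the Lipschitz estimate $\|c_t-c_s\|_{L^1(\hat x^{-\alpha}\dx{x})}\le C|t-s|$, which gives $c\in C([0,T];L^1(\bR_+,\hat x^{-\alpha}\dx{x}))$.

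The main technical obstacle is Step~2: the equicontinuity estimate \eqref{eq:equicontinuity} lives in the weighted space with the mild weight $(1+\hat x^{-\alpha})$ that does not see the $\check x^\lambda$-tail, while the weak topology on $Y_{-\alpha,\lambda}$ does. Bridging this gap requires a careful split-and-tighten argument combining Propositions~\ref{prop:UIbdd} and~\ref{prop:higher-moment-bdd} with \eqref{eq:equicontinuity}, and it is exactly this bridging that makes the singular weight $\hat x^{-\alpha}$ compatible with the faster growth at infinity.
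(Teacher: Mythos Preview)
Your proof is essentially correct and uses the same three inputs (Lemma~\ref{lem:prop-lambda-moment}, Propositions~\ref{prop:UIbdd} and~\ref{prop:higher-moment-bdd}) as the paper, but organizes them in a different order. The paper runs the Dunford--Pettis/Arzel\`a--Ascoli argument in the \emph{smaller} space $L^1(\bR_+,\hat{x}^{-\alpha}\dx{x})$ first, where the equicontinuity estimate~\eqref{eq:equicontinuity} applies directly without any splitting; only afterwards does it upgrade the weak-$L^1(\hat x^{-\alpha}\dx{x})$ limit to a weak-$Y_{-\alpha,\lambda}$ limit by passing the higher-moment bound of Proposition~\ref{prop:higher-moment-bdd} through Fatou and then controlling the $\check x^\lambda$-tail of $\int g(x)(x^{-\alpha}+x^\lambda)(c^n_t-c_t)\dx{x}$ uniformly in $n$. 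You instead aim for weak-$Y_{-\alpha,\lambda}$ compactness and equicontinuity from the start, which forces the split-and-tighten maneuver you correctly flag as the main technical obstacle in Step~2. Both routes work; the paper's ordering simply sidesteps that obstacle.

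One slip to fix in Step~3: you invoke a ``countable dense subset $\{f_j\}\subset L^\infty(\bR_+)$'', but $L^\infty$ is not separable. What you actually need---and what is available---is that weakly compact subsets of the separable space $Y_{-\alpha,\lambda}\cong L^1(w\dx{x})$ are metrizable in the weak topology, so a countable \emph{separating} family in $L^\infty$ suffices for the diagonal argument. Alternatively, bypass metrizability altogether: use Eberlein--\v{S}mulian to extract weak limits along rationals and then invoke your Step~2 equicontinuity test-function by test-function.
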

\begin{proof}
The estimate in Proposition \ref{prop:higher-moment-bdd} implies for $k\ge 1$ the bound
\[\begin{split}\int_{k}^{\infty} c^n_t(x) \hat{x}^{-\alpha}\d x & \le \int_{k}^{\infty} c^n_t(x) \d x
\le k^{1-\lambda}(\sigma_{\lambda}(k))^{-1} \int_{k}^\infty x^{\lambda -1} \sigma_\lambda(x) c^n_t(x) \d x\\
& \le C(T) k^{1-\lambda}(\sigma_{\lambda}(k))^{-1}\to 0 \quad \text{ as } k \to \infty.\end{split}\]
Hence, the sequence $(\hat{p}_{-\alpha} c^n_t)_{n>1, t\in[0,T]}$ is uniformly tight with respect to the measure  $\hat{x}^{-\alpha }\d x$.
Moreover, Proposition~\ref{prop:UIbdd} implies $( c^n_t)_{n>1}$ is also uniformly integrable with respect to $\hat{x}^{-\alpha} \d x$ for each $t\in [ 0,T]$ by the de la Vallée-Poussin theorem.
We conclude via the Dunford-Pettis theorem that $(c^n_t)$ is relatively weakly sequentially compact in $L^1(\bR_+, \hat{x}^{-\alpha} \d x)$ for each $t\in[0,T]$.
Furthermore, Lemma~\ref{lem:prop-lambda-moment} implies $c^n$ is strongly equicontinuous in $L^1(\bR_+, \hat{x}^{-\alpha} \d x)$ for $t\in [0,T]$. In particular, it is weakly equicontinuous. Thus, by a variant of the Arzel\'{a}-Ascoli theorem, we obtain non-negative $c\in C([0,T], w\mhyphen L^1(\bR_+, \hat{x}^{-\alpha} \d x))$ along a subsequence.

By a standard truncation argument (see e.g.~\cite[Proof of Theorem 2.3]{barik2025continuous}), we get that the weak convergent limit satisfies for all $l > 0$ and $t\in[0,T]$ the bound
\[\int_{0}^l x^{\lambda-1} \sigma_\lambda(x)c_t(x)\d x \le \lim_{n \to \infty} \int_{0}^l x^{\lambda-1} \sigma_\lambda(x)c^n_t(x) \d x\le C(T).\]
Fatou's lemma implies that by letting $l\to \infty$ the bound
\[
	\sup_{t\in[0,T]}\int_0^\infty x^{\lambda-1} \sigma_\lambda(x)c_t(x)\d x \le C(T).
\]
Now, we consider for $g \in L^\infty(\bR_+)$, $t\in[0,T]$ and $l\ge1$ the difference
\[\begin{split}
	\biggl|\int_0^\infty   &g(x) (x^{-\alpha}+x^{\lambda}) [c^n_t(x)-c_t(x)]  \d x \biggr| \\
&\le \biggl|\int_0^l   g(x) (x^{-\alpha}+x^{\lambda}) [c^n_t(x)-c_t(x)]  \d x \biggr| + \biggl|\int_l^\infty   g(x) (x^{-\alpha}+x^{\lambda}) [c^n_t(x)-c_t(x)]  \d x \biggr| \,.
\end{split}\]
We rewrite the first term as
\begin{equation}\begin{split}
\int_0^l  g(x) (x^{-\alpha}+x^{\lambda}) [c^n_t(x)-c_t(x)]  \d x  = \int_0^l   g(x) (1+ x^{\lambda+\alpha}) [c^n_t(x)-c_t(x)] x^{-\alpha}  \d x \,.
\end{split}
\end{equation}
Since $g (1+ p_{\lambda+\alpha}) \1_{[0,l]} \in L^{\infty}(\bR_+, {x}^{-\alpha} \d x )$, we obtain its convergence to zero as $n\to \infty$ due to its weak convergence in $L^1((0,l),x^{-\alpha} \d x)$.
We estimate the second term as follows
\[\begin{split}
	\biggl|\int_l^\infty \mkern-8mu   g(x) (x^{-\alpha}+x^{\lambda}) [c^n_t(x)-c_t(x)]  \d x \biggr|
&\le (1+ l^{-\lambda -\alpha}) \int_l^\infty   |g(x)| \, x^{\lambda}\, |c^n_t(x)-c_t(x)|  \d x\\
&\le 2 \|g\|_{L^\infty}\sup_{y\ge l}\frac{y}{\sigma_{\lambda}(y)} \int_l^\infty  x^{\lambda-1}  \sigma_{\lambda}(x) \bigl(c^n_t(x)+c_t(x)\bigr) \d x \\
& \le  4\|g\|_{L^\infty}\sup_{y\ge l}\frac{y}{\sigma_{\lambda}(y)}C(T) \to 0 \quad \text{ as } l \to \infty \text{ uniformly in } n.
 \end{split}\]
Hence, we conclude $\lim_{n\to\infty}|\int_0^\infty   g(x) (x^{-\alpha}+x^{\lambda}) [c^n_t(x)-c_t(x)]  \d x|=0 $ for each $g\in L^\infty(\bR_+)$ and $c^n_t \to c_t$ in $w\mhyphen Y_{-\alpha, \lambda}$.

To conclude the time continuity, we use the weak convergence in $L^1(\bR_+,  \hat{x}^{-\alpha} \d x)$ of $c^n_t-c^n_s \to c_t-c_s$ and the $L^1$-equicontinuity from \eqref{eq:equicontinuity} proven in Lemma~\ref{lem:prop-lambda-moment}, to get
\begin{equation*}\begin{split}
		\|c(t)-c(s)\|_{-\alpha,0}&=\sup_{g\in L^\infty, \|g\|_{L^\infty}=1} \biggl|\int_0^\infty g(x) (c_t(x) -c_s(x)) \hat{x}^{-\alpha}\d x \biggr| \\
&=\sup_{g\in L^\infty, \|g\|_{L^\infty}=1} \lim_{n\to\infty} \biggl|\int_0^\infty g(x) (c^n_t(x) -c^n_s(x))  \hat{x}^{-\alpha} \d x \biggr| \\
&\le C(T)(t-s)
 \end{split}\end{equation*} which shows $c \in C([0,T], L^1(\bR_+, \hat{x}^{-\alpha}\d x))$.
\end{proof}
Having identified a limit, we still need to show that the limit satisfies the weak form~\eqref{eq:swCGEDG}.
\begin{proposition}[Identification of limit] \label{prop:limit-equation} Assume $K$ satisfies Assumptions   \ref{ass:existence}.
The subsequence limit $c$ in $Y_{-\alpha,\lambda}$ from  Proposition \ref{prop:convergence}  is a weak solution to~\eqref{eq:CGEDG} on $[0,\infty)$.
\end{proposition}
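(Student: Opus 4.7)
The plan is to pass to the limit $n \to \infty$ in the weak truncated form~\eqref{eq:weak-truncated} of Lemma~\ref{lem:weak-form}, for fixed $f \in L^\infty(\bR_+)$ and $t \in [0,T]$ with arbitrary $T > 0$, thereby recovering~\eqref{eq:swCGEDG} on $[0,\infty)$. For the left-hand side, note that $\inf_{x>0}(x^{-\alpha}+x^\lambda) > 0$, so that $g(x) := f(x)/(x^{-\alpha}+x^\lambda)$ lies in $L^\infty(\bR_+)$; the $w\mhyphen Y_{-\alpha,\lambda}$-convergence $c^n_t \to c_t$ from Proposition~\ref{prop:convergence} combined with the strong convergence $c^n_0 \to c_0$ in $Y_{-\alpha,\lambda}$ (by dominated convergence, using $c_0 \in Y_{-\alpha,\lambda}^+$ as majorant) yields convergence of the LHS.

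For the right-hand side, fix $R \ge 1$ and split the integration domain $D = \{(x,y,z) \in \bR_+^3 : x \ge z,\, y \ge z\}$ into a bounded core $D_R := \{(x,y,z) \in D : 1/R \le x,y \le R,\; z \le R\}$ and its complement $D\setminus D_R$. For the tail contribution on $D \setminus D_R$, using $|\Delta_z f| \le 2\|f\|_\infty$, the kernel bounds~\eqref{eq:generalbdd} and~\eqref{eq:alpha bound K}, the uniform mixed-moment estimate from Lemma~\ref{lem:prop-lambda-moment}, the uniform higher-moment bound from Proposition~\ref{prop:higher-moment-bdd}, and $\varphi \in Y_{-2\alpha,2\lambda}^+$, we obtain an error $\epsilon(R) \to 0$ as $R \to \infty$, uniformly in $n$. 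On $D_R$, for $n$ large enough $K_n \to K$ a.e.\ and is uniformly dominated by an integrable majorant, so after integrating out $z$ the problem reduces to the bilinear convergence
\[
\iint_{[1/R,R]^2} \psi_R^n(x,y)\, c^n_s(x)\, c^n_s(y) \dx x \dx y \;\longrightarrow\; \iint_{[1/R,R]^2} \psi_R(x,y)\, c_s(x)\, c_s(y) \dx x \dx y
\]
for each $s \in [0,t]$, where $\psi_R^n, \psi_R$ are uniformly bounded with $\psi_R^n \to \psi_R$ a.e. This follows from the weak $L^1([1/R,R])$-convergence $c^n_s \to c_s$ (a consequence of the $w\mhyphen Y_{-\alpha,\lambda}$-convergence together with the strict positivity of $x^{-\alpha}+x^\lambda$ on the compact set), the uniform integrability from Proposition~\ref{prop:UIbdd}, and the decomposition
\[
c^n_s \otimes c^n_s - c_s \otimes c_s = (c^n_s - c_s) \otimes c^n_s + c_s \otimes (c^n_s - c_s),
\]
combined with a Vitali-type argument to absorb the $n$-dependence of $\psi_R^n$. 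Since the $s$-integrand is bounded uniformly in $s$ and $n$ by mass conservation from Proposition~\ref{prop:truncated-sol-moment-preserve}, dominated convergence in $s$ yields convergence of the core part.

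The principal obstacle is the uniform tail estimate on $D\setminus D_R$, which must simultaneously accommodate the superlinear growth as $x, y \to \infty$ (controlled via the higher moment of Proposition~\ref{prop:higher-moment-bdd}), the possible singularity as $x, y \to 0$ (controlled via the mixed moment $\Mom_{-\alpha,\lambda}$ and the bound~\eqref{eq:alpha bound K}), and the integrability of the $z$-marginal (controlled via $\varphi \in Y_{-2\alpha,2\lambda}^+$). Finally, the admissibility condition~(b) of Definition~\ref{def:weaksolution} is obtained by the same type of uniform bound applied directly to $\iiint K(x,y,z) c^n_s(x) c^n_s(y) \dx z \dx x \dx y$, followed by Fatou's lemma to transfer the bound to the limit $c$.
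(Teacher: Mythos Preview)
Your approach is correct and essentially the same as the paper's: a core--tail split, weak $L^1$-convergence on the bounded core for the bilinear term (the paper invokes a lemma from \cite{laurenccot2014} for this, you use the tensor decomposition with a Vitali argument --- both work), and uniform vanishing of the tail via the moment estimates of Lemma~\ref{lem:prop-lambda-moment} and Proposition~\ref{prop:higher-moment-bdd}.

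One imprecision worth flagging: for the small-$x$ part of the tail, the bound~\eqref{eq:alpha bound K} is not the right tool. It only yields $K \lesssim \widehat{(x{-}z)}^{\alpha}\hat{x}^{-2\alpha}\hat{y}^{-\alpha}\check{x}^\lambda\check{y}^\lambda\varphi(z)$, and since you do not control $\Mom_{-2\alpha}(c^n_s)$, this gains nothing over~\eqref{eq:generalbdd}. The mechanism that actually makes this tail vanish (and the one the paper uses) is the support constraint $z \le x$ built into $D$: when $x \le 1/R$ one also has $z \le 1/R$, so one may insert $1 \le R^{-\alpha} z^{-\alpha}$ and absorb the extra $z^{-\alpha}$ into $\|\varphi\|_{-\alpha,0}$, gaining the factor $R^{-\alpha}$. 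Since you already work on $D$ and invoke $\varphi \in Y_{-2\alpha,2\lambda}^+$, the fix is immediate. (Also, $|\Delta_z f| \le 4\|f\|_\infty$, not $2\|f\|_\infty$.)
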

\begin{proof}
We use similar arguments as
in~\cite[Proof of Theorem 2.3]{barik2025continuous} and
in~\cite[Theorem 2.2]{si2024existence} to show the weak limit $c$ satisfies the weak form of~\eqref{eq:CGEDG}.
Let $f\in L^\infty(\bR_+)$ be a test function for the weak form~\eqref{eq:swCGEDG}.
Then, we have for each $t\in [0,T]$, $n,k\in\bN$ and $n > k > 1 $ the identity
\begin{equation*}\begin{split}\int_{1/n}^n & f(x) (c^n_t( x) - c^n_0( x)) \d x\\
&= \int_0^t \d s \int_{0}^\infty \d z \biggl( \iint_{\mathclap{(1/k,k)^2}} \d x \d y \, (\Delta_z f)(x) \kappa_n[c^n_s](x,y,z) +  \iint_{\mathclap{\bR_+^2\setminus (1/k,k)^2}} \d x \d y \, (\Delta_z f)(x) \kappa_n[c^n_s](x,y,z)\biggr).
\end{split}\end{equation*}
For $z \in \bR_+$, $n>k>1$, $x,y \in (1/k,k)$, we have $|K_n(x,y,z)|\le \varphi(z) k^{2(\lambda+\alpha)}$ and $K_n(x,y,z)\to K(x,y,z)$ pointwise as $n\to\infty$.
By applying \cite[Proposition 2.18]{laurenccot2014}, we have $c^n_s\to c_s$ weakly in $L^1((1/k,k))$ for each $s \in [0,T]$.
Hence, we get the convergence
\[
\lim_{n \to \infty} \int_{(1/k,k)} \mkern-12mu \d y \, K_n(x,y,z)c^n_s(y)  = \int_{(1/k,k)}\mkern-12mu \d y \, K(x,y,z)c_s(y) \text{ for each } x\in (1/k,k), z\in \bR_+, s \in [0,T].
\]
By the estimate $| \int_{(1/k,k)} \d y \, K_n(x,y,z)c^n_s(y) | \le \varphi(z) k^{2(\lambda+\alpha)}\Mom_0(c^n_s) = \varphi(z) k^{2(\lambda+\alpha)}\Mom_0(c^n_0) $ and the bound
$|(\Delta_z f) (x)|\le 4 \|f\|_{L^\infty}$ as well as the weak convergence of $c^n_s$, we have thanks to~\cite[Proposition 2.18]{laurenccot2014} for each $z\in \bR_+$ and $s \in [0,t]$ the convergence
\[
\lim_{n\to\infty}   \int_{(1/k,k)^2}\d x \d y \, (\Delta_z f) (x)\kappa_n[c^n_s](x,y,z) = \int_{(1/k,k)^2} \d x\d y \, (\Delta_z f) (x)\kappa[c_s](x,y,z) ,
\]
Because we can bound
\[
	\biggl| \iint_{(1/k,k)^2} \d x \d y \, (\Delta_z f)(x) \kappa_n[c^n_s](x,y,z)\biggr| \le  4 \|f\|_{L^\infty} \varphi(z) k^{2(\lambda+\alpha)}(\Mom_0(c^n_0))^2, \quad\text{ for }s\in [0,t], z\in\bR_+\]
and this upper bound is integrable in $\{(s,z) \in [0,t] \times \bR_+\}$, we can apply the dominated convergence theorem to obtain
\[\begin{split}\lim_{n\to\infty}\int_0^t \d s \int_0^\infty \d z \iint_{(1/k,k)^2} &\d x \d y \, (\Delta_z f)(x) \kappa_n[c^n_s](x,y,z)\\
&=\int_0^t \d s \int_0^\infty \d z \iint_{(1/k,k)^2} \d x \d y \, (\Delta_z f)(x) \kappa[c_s](x,y,z).\end{split} \]
On the other hand, we can show the remaining terms vanish to zero uniformly in $n$ as $k\to \infty$.
Indeed, by using Lemma~\ref{lem:prop-lambda-moment} and Proposition~\ref{prop:higher-moment-bdd}, we have
\begin{align*}
	&\int_0^t \d s \int_{0}^\infty \d z  \iint\nolimits_{\bR_+^2 \setminus (1/k,k)^2} \d x \d y \, (\Delta_z f)(x) \kappa_n[c^n_s](x,y,z) \\
&\le
 \int_0^t \mkern-2mu \d s \int_{0}^\infty \mkern-4mu \d z\iint  \d x \d y \, \bigl|(\Delta_z f)(x)\bigr| \bigl(\1_{[k,\infty)}(x) \! +\! \1_{[k,\infty)}(y) \! +\! \1_{[0,1/k]}(x) \! +\! \1_{[0,1/k]}(y)\bigr)  \kappa_n[c^n_s](x,y,z) \\
& \le  8  \int_0^t \mkern-2mu \d s \int_{0}^\infty \mkern-4mu \d z\iint  \d x \d y   \bra*{\1_{[0,1/k]}(x)+\1_{[k,\infty)}(x)} \kappa_n[c^n_s](x,y,z)\\
& \le  8  \int_0^t \mkern-2mu \d s \int_{0}^\infty \mkern-4mu \d z \, \varphi(z) \int  \d y \, \hat{y}^{-\alpha} \check{y}^\lambda c^n_s(y)\bra*{k^{-\alpha} z^{-\alpha} \int_{0}^{1/k} x^{-\alpha }c^n_s(x) + \int_{k}^\infty x^{\lambda} c^n_s(x) }\\
& \le  8  \|\varphi\|_{-\alpha,0}\sup_{s\in[0,T]} \Mom_{-\alpha,\lambda}(c^n_s) \int_0^t \d s \bra*{ k^{-\alpha}  \Mom_{-\alpha}(c^n_s) + \sup_{y \ge k}\frac{y}{\sigma_{\lambda}(y)} \int_{k}^\infty \d x \, x^{\lambda-1}\sigma_{\lambda}(x) c^n_s(x) }\\
& \le 8  \|\varphi\|_{-\alpha,0}  C(T)\bra*{ k^{-\alpha}+ \sup_{x \ge k}\frac{x}{\sigma_{\lambda}(x)}} .
 \end{align*}
Hence, by taking $n\to \infty$ and then $k\to \infty$, we have shown the convergence of the right-hand side in the weak form~\eqref{eq:swCGEDG}, that is
\[\lim_{n\to \infty}\int_0^t\d s \iiint \d z \d x \d y  (\Delta_z f)(x) \kappa_n[c^n_s](x,y,z) = \int_0^t\d s \iiint \d z \d x \d y  (\Delta_z f)(x) \kappa[c_s](x,y,z). \]
Likewise, the weak $L^1$ convergence for $t\in [0,T]$ implies the convergence of the left-hand side, that is
\[\lim_{n\to \infty}\int_0^\infty  f(x) (c^n_t( x) - c^n_0( x)) \d x= \int_0^\infty f(x) (c_t( x) - c_0( x)) \d x \]
for each $f\in L^\infty$. Hence the limit $c$ satisfies the weak form~\eqref{eq:swCGEDG}.
\end{proof}
We show that the so constructed solutions conserve the zeroth and first moment, which concludes the proof of Theorem~\ref{thm:existence}.
\begin{proposition}\label{prop:conservation-of-mass-1.moment}  Assume $K$ satisfies Assumptions  \ref{ass:existence}. The weak solution to~\eqref{eq:CGEDG} on $[0,\infty)$ in $Y_{-\alpha,\lambda}$ constructed in Proposition \ref{prop:limit-equation} conserves the mass and the first moment.
\end{proposition}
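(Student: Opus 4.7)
The plan is to verify both conservation laws by testing the weak form~\eqref{eq:swCGEDG} of Definition~\ref{def:weaksolution}, applied to the constructed solution $c$, with suitable admissible test functions. Mass conservation is immediate: since $f\equiv 1\in L^\infty(\bR_+)$ and $(\Delta_z f)(x)\equiv 0$, the right-hand side of~\eqref{eq:swCGEDG} vanishes and $\Mom_0(c_t)=\Mom_0(c_0)$ for every $t\ge 0$.

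For the first moment, the formally conserved test function $f(x)=x$ is unbounded, so I would approximate it by the truncations $f_k(x):=x\wedge k\in L^\infty(\bR_+)$ and pass to the limit $k\to\infty$. The first step is the explicit computation of $\Delta_z f_k$: a direct case analysis, splitting according to the positions of $x-z$, $x$ and $x+z$ relative to $k$, produces the sharp identity
\[ |(\Delta_z f_k)(x)|=(z-|x-k|)_+\le z\,\1_{[k-z,k+z]}(x), \]
which vanishes pointwise as $k\to\infty$ for every fixed $(x,z)\in\bR_+\times\bR_+$ with $x\ge z$ and is uniformly dominated by $z$. This identity is the only genuinely new ingredient.

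With this at hand, the passage to the limit in~\eqref{eq:swCGEDG} splits into two standard convergences. On the left-hand side, monotone convergence gives $\int_0^\infty f_k(x)[c_t(x)-c_0(x)]\d x\to\int_0^\infty x[c_t(x)-c_0(x)]\d x$; the finiteness $\int_0^\infty x\,c_t(x)\d x\le\Mom_{-\alpha,\lambda}(c_t)$ is inherited from the uniform estimate of Lemma~\ref{lem:prop-lambda-moment} through the lower semicontinuity of the $Y_{-\alpha,\lambda}$-norm on the non-negative cone under the weak convergence from Proposition~\ref{prop:convergence}. On the right-hand side, dominated convergence applies with majorant $2z\,K(x,y,z)c_s(x)c_s(y)$, which is integrable because
\[ \int_0^\infty z\,\varphi(z)\d z\le\int_0^1 z^{-2\alpha}\varphi(z)\d z+\int_1^\infty z^{2\lambda}\varphi(z)\d z\le\|\varphi\|_{-2\alpha,2\lambda}<+\infty \]
via the elementary bounds $z\le z^{-2\alpha}$ on $[0,1]$ and $z\le z^{2\lambda}$ on $[1,\infty)$ afforded by $\alpha\ge 0$ and $\lambda\ge 1$, while the $(x,y)$-integral is controlled via~\eqref{eq:generalbdd} by $\Mom_{-\alpha,\lambda}(c_s)^2$ uniformly in $s\in[0,T]$. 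Combining these two limits yields $\Mom_1(c_t)=\Mom_1(c_0)$. The only (minor) obstacle is the case analysis behind the identity for $|\Delta_z f_k|$; no new estimate beyond those developed in Section~\ref{sec:existence} is required.
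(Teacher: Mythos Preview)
Your argument is correct. The mass conservation is identical to the paper's. For the first moment, however, you take a genuinely different route: the paper does not test the weak form of the limit $c$ at all, but instead goes back to the approximating truncated solutions $c^n$, uses the exact first-moment conservation of $c^n$ from Proposition~\ref{prop:truncated-sol-moment-preserve}, and controls the error via weak $L^1$ convergence on $(0,k)$ together with the $\lambda$-moment tail bound $\int_k^\infty x\,c_t^n(x)\,\d x\le C(T)k^{1-\lambda}$. Your approach, by contrast, works entirely at the level of the limit weak formulation: you test~\eqref{eq:swCGEDG} with $f_k(x)=x\wedge k$, exploit the sharp identity $|\Delta_z f_k(x)|=(z-|x-k|)_+\le z\,\1_{[k-z,k+z]}(x)$, and pass to the limit by dominated convergence using the integrability of $z\varphi(z)$ and the uniform $\Mom_{-\alpha,\lambda}$ bound on $c_s$.

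Your route is arguably cleaner, since it avoids revisiting the approximating sequence and shows directly that \emph{any} weak solution in $Y_{-\alpha,\lambda}$ with $\sup_{s\le T}\Mom_{-\alpha,\lambda}(c_s)<\infty$ conserves the first moment. The paper's route, on the other hand, exploits information already established for $c^n$ and requires no new pointwise computation of a discrete Laplacian. One small cosmetic point: your majorant is $z\,K$, not $2z\,K$; and the monotone convergence on the left-hand side should be applied separately to $\int f_k c_t$ and $\int f_k c_0$, since $c_t-c_0$ is signed, which you implicitly do by recording the finiteness $\int x\,c_t(x)\,\d x\le\Mom_{-\alpha,\lambda}(c_t)$.
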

\begin{proof}
The conservation of mass follows from the definition of weak solutions in~\eqref{eq:swCGEDG} by taking the admissible test-function $\1_{\bR_+}\in L^\infty(\bR_+)$.
For the conservation of the first moment, we consider for $k >1$ the truncated test-function $f_k(x) = x \1_{[0,k]}(x)$ and we get
\[\begin{split}\biggl|\int_{\bR_+}x (c_t-c_0)(x) \d x\biggr|
	&\le \biggl| \int_{0}^k x (c_t - c_t^n)(x) \d x\biggr|+ \biggl| \int_0^{k} x (c_t^n-c_0^n)(x) \d x \biggr|\\
 &\quad + \biggl|\int_0^k  x (c_0^n -c_0)(x) \d x \biggr|+ \biggl|\int_k^\infty x (c_t-c_0)(x) \d x \biggr|. \end{split}\]
By the weak convergence of $c_t^n \to c_t$ in $L^1((0,k))$, the first and the third integral converge to zero as $n\to \infty$.  Now using that the first moment is conserved for the truncated system and bounded  $\lambda$ moment in Lemma \ref{lem:prop-lambda-moment}, we have
\[ \biggl|\int_0^{k} x (c_t^n-c_0^n)(x) \d x\biggr| = \biggl|\int_k^n  x (c_t^n-c_0^n)(x) \d x\biggr| \le \frac{1}{k^{\lambda-1}}\int_0^\infty x^\lambda c^n_t(x) + c^n_0(x) \d x \le \frac{C(T)}{k^{\lambda -1}}. \]
Similarly, we obtain
\[\biggl|\int_k^\infty x (c_t-c_0)(x) \d x \biggr|\le \frac{C(T)}{k^{\lambda-1}}. \]
Hence, we can first take the limit $n\to\infty$ and then consider $k\to\infty$ to obtain the convergence $\int_{\bR_+}x c_t(x) \d x = \int_{\bR_+}x c_0(x) \d x $.
\end{proof}

\section{Uniqueness}
The main idea of the proof is to show a Grönwall's estimate for the moment of the difference of two solutions.
\begin{proof}[Proof of Theorem \ref{thm:uniquness}] Let $e_t(x)=c_t(x)-d_t(x)$, where $c_t,d_t$ are two solutions to~\eqref{eq:CGEDG} with the same initial data.
The proof is a Grönwall's argument for the mixed moment of the difference $\Mom_{-\alpha,\lambda}(e(t))= \int_{\bR_+} \hat{x}^{-\alpha}\check{x}^{\lambda}  |e_t(x) | \d x  $. Let $g_t(x) =\hat{x}^{-\alpha} \check{x}^{\lambda} \operatorname{sign}(e_t(x))$. From now on, we drop the time index and write only $e(x)$. Also, the time derivative below should be understood in the weak sense, that is, after integrating both sides in time.
In addition, we need to introduce a truncation parameter $n>2$ at $0$ and $+\infty$.
We start splitting the time derivative
\[\begin{split}
	\MoveEqLeft
	\frac{\d}{\d t} \int_0^\infty  \1_{[1/n,n]}(x) \hat{x}^{-\alpha} \check{x}^{\lambda} |e(x)| \d x
=\int_0^\infty \1_{[1/n,n]}(x) g(x)  \frac{\d}{\d t} {e}(x) \d x\\
&= \iiint \d z  \d x \d y \,\bigl(\Delta_z(g\1_{[1/n,n]})\bigr)(x) \bigl(\kappa[c](x,y,z)-\kappa[d](x,y,z)\bigr) \\
&= \iiint \d z  \d x \d y \,\bigl(\Delta_z(g\1_{[1/n, n]})\bigr)(x) K(x,y,z)(c(x) e(y)+ e(x) d(y) ) .
   \end{split}\]
By~\eqref{eq:id:discretelaplace}, we can further rewrite
\begin{equation}\label{eq:time-der,lambda-diff}\begin{split}
 \MoveEqLeft\frac{\d}{\d t}  \int_0^\infty \1_{[1/n,n]}(x)  \hat{x}^{-\alpha} x^{\lambda} |e(x)| \d x
\\
&=\iiint \d z  \d x \d y\,(\Delta_z g)(x)\1_{[1/n, n]}(x) K(x,y,z)\bigl(c(x) e(y)+ e(x) d(y)\bigr)
\\
&\quad+ \iiint \d z  \d x \d y \, g(x+z) \bra*{\1_{[1/n-z,1/n]}(x)-\1_{[n-z,n]}(x)}  K(x,y,z)(c(x) e(y)+ e(x) d(y) )\\
&\quad+ \iiint \d z  \d x \d y \, g(x-z) \bra*{ \1_{[n,n+z]}(x)-\1_{[1/n,1/n+z]}(x) } K(x,y,z)(c(x) e(y)+ e(x) d(y) ) \\
&\le  \iiint \d z  \d x \d y \, |(\Delta_z g)(x)|\1_{[1/n, n]}(x) K(x,y,z) c(x) |e(y)| \\
&\quad + \iiint \d z  \d x \d y \,  (\Delta_z g)(x)\1_{[1/n, n]}(x) K(x,y,z) e(x) d(y) \\
&\quad + \text{ boundary terms }.
\end{split}\raisetag{70pt}
\end{equation}
First, we show the boundary terms vanish as $n \to \infty$.
For doing so, we define the abbreviation $\Mom_{-\alpha,\lambda}(c,d) = \max(\Mom_{-\alpha,\lambda}(c),\Mom_{-\alpha,\lambda}(d))$ and estimate
 \begin{align}
\MoveEqLeft \int_{\bR_+}\d z\, \varphi(z) \int_{[z,\infty)^2} \mkern-28mu \d x \d y \, |g(x+z)| \1_{[n-z,n]}(x) (\hat{x}^{-\alpha}\hat{y}^{-\alpha}  \check{x}^{\lambda}\check{y}^{\lambda})\bigl(c(x) (c(y)+d(y))+ (c(x)+d(x)) d(y) \bigr) \nonumber\\
&\le  3  \int_{\bR_+}\d z \, \varphi(z)  \int_{n/2}^n \d x \,  \hat{x}^{-\alpha} {(2x)}^{\lambda} (c(x)+d(x))  \bra[\big]{\check{x}^{\lambda} \hat{x}^{-\alpha}  \Mom_{-\alpha,\lambda}(c, d)} \label{eq:domiantedbound} \\
&  \le  3  \int_{\bR_+}\d z\, \varphi(z) \int_{n/2}^n \d x \, 2^{\lambda} {x}^{2\lambda} \bigl(c(x)+d(x)\bigr)    \Mom_{-\alpha,\lambda}(c , d) \nonumber
\end{align}
which tends to zero as $n\to \infty$ if $c_t,d_t \in Y_{2\lambda}$.
Similarly, the next boundary term can be estimated by
 \begin{equation*}\begin{split}
\int_{\bR_+}&\d z\, \varphi(z) \int_{[z,\infty)^2} \mkern-26mu \d x \d y\, |g(x+z)| \1_{[1/n-z,1/n]}(x) (\hat{x}^{-\alpha}\hat{y}^{-\alpha} \check{y}^{\lambda})(c(x) (c(y)+d(y))+ (c(x)+d(x)) d(y) ) \\
&\le  3  \int_{\bR_+}\d z\, \varphi(z)  \int_{1/2n}^{1/n} \d x \,  (x+z)^{-\alpha} x^{-\alpha}   (c(x)+d(x))    \Mom_{-\alpha,\lambda}(c , d)\\
&  \le  3  \int_{\bR_+}\d z\, \varphi(z)\int_{1/2n}^{1/n} \d x \,  {x}^{-2\alpha} (c(x)+d(x))     \Mom_{-\alpha,\lambda}(c , d)
\end{split}\end{equation*}
which tends to zero as $n\to\infty$ if $c_t,d_t \in Y_{-2\alpha}$.
A further boundary term is estimated using \eqref{eq:alpha bound K} from Assumption~\ref{ass:existence} by
\begin{equation*}\begin{split}
\MoveEqLeft \biggl| \iiint\d z \d x \d y  g(x-z) \1_{[n,n+z]}(x) K(x,y,z)\bigl(c(x) e(y)+ e(x) d(y)\bigr) \biggr|\\
& \le \int_{\bR_+} \d z \int_{n}^{n+z} \d x \int \d y \, \widehat{(x-z)}^{-\alpha}\widecheck{(x-z)}^{\lambda} K(x,y,z) \bigl(c(x) |e(y)| + |e(x)| d(y) \bigr) \\
 &\le  \int_{\bR_+} \d z  \, \varphi(z) \int_{n}^{n+z} \d x \int \d y \, C_\alpha \hat{x}^{-2\alpha} \check{x}^{2\lambda}  \hat{y}^{-\alpha} \check{y}^{\lambda} \bigl(c(x) |e(y)| + |e(x)| d(y) \bigr) \\
 &\le 3 C_{\alpha} \int_{\bR_+} \d z \, \varphi(z)\int_{n}^\infty \d x \, x^{2\lambda} \bigl(c(x)+d(x)\bigr)  \Mom_{-\alpha,\lambda}(c , d) \,,
\end{split}\end{equation*}%
which tends to zero as $n\to \infty$ provided that $c,d\in Y^+_{-2\alpha,2\lambda}$.
Finally with \eqref{eq:alpha bound K} from Assumption~\ref{ass:existence}, we have
\begin{equation*}\begin{split}
\MoveEqLeft \biggl|  \iiint\d z \d x \d y g(x-z) \1_{[1/n,1/n+z]}(x) K(x,y,z)\bigl(c(x) e(y)+ e(x) d(y)\bigr) \biggr|\\
 &\le  \int_{\bR_+} \d z \int_{1/n \vee z}^{1/n+z} \d x \int \d y \, ({x-z})^{-\alpha}\check{x}^{\lambda}  K(x,y,z)(c(x) |e(y)| + |e(x)| d(y) )\\
 & \le \int_{\bR_+} \d z \, \varphi(z) \int_{1/n \vee z}^{1/n+z} \d x \int \d y \, C_{\alpha} \hat{x}^{-2\alpha} \check{x}^{2\lambda} \hat{y}^{-\alpha} \check{y}^{\lambda}\bigl(c(x) |e(y)| + |e(x)| d(y) \bigr)\\
 &\le 3 C_{\alpha} \int_{\bR_+} \d z\, \varphi(z)\int_{1/n \vee z}^{1/n+z} \d x \, \hat{x}^{-2\alpha} \check{x}^{2\lambda} \bigl(c(x)+d(x)\bigr)   \Mom_{-\alpha,\lambda}(c , d) ,
\end{split}\end{equation*}
which tends to zero as $n\to \infty$ provided that $c,d\in Y^+_{-2\alpha,2\lambda}$.

Now, we return to the bulk terms in~\eqref{eq:time-der,lambda-diff}.
Since $|(\Delta_z g )(x)| \1_{[0,x]}(z) \le  (2^{\lambda}+3)\check{x}^{\lambda}\widehat{(x-z)}^{-\alpha} $, we apply again~\eqref{eq:alpha bound K} from Assumption~\ref{ass:existence} to estimate
\begin{equation*}\begin{split}
		\MoveEqLeft \iiint\d z \d x \d y \, |(\Delta_zg)(x)| \, K(x,y,z) \, c(x) |e(y)| \\
&\le  (2^{\lambda}+3) C_{\alpha} \int_{\bR_+} \mkern-8mu \d z \, \varphi(z) \int_{\bR_+} \mkern-8mu  \d x\, \hat{x}^{-2\alpha}  \check{x}^{2\lambda}c(x)\int_{\bR_+} \mkern-8mu \d y \, \hat{y}^{-\alpha} \check{y}^{\lambda} |e(y)|\\
&\le (2^{\lambda}+3)C_{\alpha} \|\varphi\|_0   \Mom_{-2\alpha,2\lambda}(c , d)\Mom_{-\alpha, \lambda}(e).
\end{split}\end{equation*}
We note that $(\Delta_z g)(x) e(x) \le  (\Delta_z \hat{p}_{-\alpha} \check{p}_{\lambda})(x) |e(x)|$
and by the discrete chain rule for the discrete Laplacian in~\eqref{eq:id:discretelaplace}, we bound
\[\begin{split}
	(\Delta_z \hat{p}_{-\alpha} \check{p}_{\lambda})(x) &=  (\Delta_z \check{p}_{\lambda})(x) \hat{p}_{-\alpha}(x) + \check{p}_{\lambda}(x+z) \partial_z^+  \hat{p}_{-\alpha}(x) -   \check{p}_{\lambda}(x-z) \partial_z^- \hat{p}_{-\alpha}(x)\\
	&= (\Delta_z \check{p}_{\lambda})(x) \hat{p}_{-\alpha}(x) +  \check{p}_{\lambda}(x+z) (\Delta_z \hat{p}_{-\alpha})(x) \\
	&\qquad - \bigl( \check{p}_{\lambda}(x+z) - \check{p}_{\lambda}(x-z)\bigr) \bigl(\hat{p}_{-\alpha}(x-z) - \hat{p}_{-\alpha}(x) \bigr)\,\\
	&\le (\Delta_z \check{p}_{\lambda})(x) \hat{p}_{-\alpha}(x) +  \check{p}_{\lambda}(x+z) (\Delta_z \hat{p}_{-\alpha})(x).
\end{split}\]
By using \[(\Delta_z \hat{p}_{-\alpha})(x)   = 0  \quad \text{ if } x-z \ge 1,\] we have the estimate
\[\begin{split}
\check{p}_{\lambda}(x+z) (\Delta_z \hat{p}_{-\alpha})(x) &\le \1_{\{x< 1+z\}}(x,z) \check{p}_{\lambda}(x+z) \hat{p}_{-\alpha}(x-z) \\
&\le (1+2z)^{\lambda} \hat{p}_{-\alpha}(x-z)\\
&\le 3^\lambda \check{z}^\lambda \hat{p}_{-\alpha}(x-z).
\end{split}
\]
Now we estimate $(\Delta_z \check{p}_{\lambda})(x)$ for $x\ge z\ge0$ and $\lambda\in[1,2]$. First, $(\Delta_z \check{p}_{\lambda})(x)=0$ if $x+z\le1$. Assuming $x+z>1$, three cases occur. If $x-z\ge1$, then $\check{p}_\lambda=p_\lambda$ on the relevant arguments, and the sublinearity of $r\mapsto r^{\lambda-1}$ gives
\[(\Delta_z \check{p}_{\lambda})(x)=(\Delta_z p_{\lambda})(x)=\lambda\int_0^z\bigl[(x+r)^{\lambda-1}-(x-r)^{\lambda-1}\bigr]\d r\le\lambda\int_0^z(2r)^{\lambda-1}\d r=2^{\lambda-1}\check{z}^{\lambda}.\]
If $x>1\ge x-z$, then $x\le2\check{z}$ and, using $\widecheck{(x-z)}=1$,
\[(\Delta_z \check{p}_{\lambda})(x)=(x+z)^{\lambda}-2x^{\lambda}+1\le\lambda\int_0^z(x+r)^{\lambda-1}\d r\le\lambda\int_0^z\bigl(x^{\lambda-1}+r^{\lambda-1}\bigr)\d r\le\bigl(\lambda2^{\lambda-1}+1\bigr)\check{z}^{\lambda}.\]
Finally, if $x-z<x\le1$, then $x\le2\check{z}$ and, arguing as in the previous case,
\[(\Delta_z \check{p}_{\lambda})(x)=(x+z)^{\lambda}-1\le(x+z)^{\lambda}-x^{\lambda}\le\lambda\int_0^z(x+r)^{\lambda-1}\d r\le\bigl(\lambda2^{\lambda-1}+1\bigr)\check{z}^{\lambda}.\]

Therefore, we have the inequality $(\Delta_z \check{p}_{\lambda})(x) \le (\lambda2^{\lambda-1}+1) \check{z}^{\lambda}$ for $x\ge z$, $\lambda\in[1,2]$ and using~\eqref{eq:generalbdd} together with~\eqref{eq:alpha bound K}, we have
\begin{equation*}\begin{split}
  \MoveEqLeft \iiint\d z \d x \d y\, (\Delta_z g)(x) K(x,y,z) e(x) d(y) \\
 &\le  \int \d z \int \d x \, \bigl((\lambda2^{\lambda-1}+1) \check{z}^{\lambda} \hat{x}^{-\alpha}+(1+2z)^{\lambda}\widehat{(x-z)}^{-\alpha}\bigr)
  |e(x)| \int \d y \,  K(x,y,z) d (y)\\
 & \le C_\lambda \int \d z\, \varphi(z) \hat{z}^{-\alpha} \check{z}^{\lambda} \int \d x \,  \hat{x}^{-\alpha} \check{x}^{\lambda}   |e(x)| \int \d y\, \hat{y}^{-\alpha} \check{y}^{\lambda} d (y)\\
&\quad + C_{\alpha,\lambda} \int \d z \, \varphi(z) \hat{z}^{-\alpha}\check{z}^\lambda  \int \d x \, \hat{ x}^{-\alpha}\check{x}^{\lambda} |e(x)|\int \d y\, \hat{y}^{-\alpha} \check{y}^{\lambda} d(y)
 \\
& = C_{\alpha,\lambda} \|\varphi\|_{-\alpha,\lambda} \Mom_{-\alpha,\lambda}(c,d) \Mom_{-\alpha, \lambda}(e),
\end{split}\end{equation*}
where we used $\hat{x}^{-\alpha} \le \hat{z}^{-\alpha}$ for $x \ge z$ in the last inequality. Since  $\Mom_{-\alpha,\lambda}(c,d)\le  \Mom_{-2\alpha,2\lambda}(c,d)$, we arrive at
\[
\frac{\d}{\d t}\Mom_{-\alpha,\lambda} (e(t)) \le  C_{\alpha,\lambda} \|\varphi\|_{-\alpha, \lambda} \Mom_{-2\alpha,2\lambda}(c,d)(t) \Mom_{-\alpha,\lambda}(e(t)).\]
So, Grönwall's lemma implies uniqueness of the solution.
\end{proof}

\section{Gelation}
In the following, we consider $\alpha=0$ which means the kernel $K$ is bounded at zero, but can still grow at infinity.
In this and the next chapter, the gelation will be shown via appropriate differential inequalities for the moments of the solution using the assumptions on the kernel. In comparison to the gelation results for exchange-driven growth~\cite{si2024existence}, the exchange of an arbitrary large mass encoded via the function~$\varphi$ has to be dealt with. Here, suitable integrability assumptions on certain moments of $\varphi$ allow us to adapt the arguments of~\cite{si2024existence}.
\begin{theorem}[Finite-time existence for quadratic growth]\label{thm:finite-existence:2}
Assume $K$ satisfies $K(x,y,z)\le \check{x}^2 \check{y}^2 \varphi(z)$, Equation  \eqref{ass:2.derivative,K} from Assumption~\ref{ass:existence} with $\alpha =0, \lambda =2$  and $\varphi \in Y^+_{0,2}$.
Then for any $0 \nequiv c_0 \in Y^+_{0,2}$, the weak solution to~\eqref{eq:CGEDG} on $[0,T_0)$, $T_0:=\bra*{ 2 \|\varphi\|_{0,2}\bigl( \Mom_0(c_0)+\Mom_2(c_0)\bigr)}^{-1}$ exists in $ Y_{0,2}^+$.
Moreover, it preserves the mass and the first moment on $[0,T_0)$.
\end{theorem}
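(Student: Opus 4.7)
The plan is to rerun the truncation and compactness programme of Section~\ref{sec:existence} almost verbatim, with the sole modification that the linear-in-time moment bound of Lemma~\ref{lem:prop-lambda-moment} is replaced by a local-in-time Bernoulli estimate. The supercritical growth (corresponding to $\mu = \nu = 2$, hence $\mu + \nu = 4$) falls outside the regime covered there, and the resulting quadratic differential inequality will produce the finite blow-up time $T_0$.

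The crucial a priori bound is obtained as follows. Fix $n \ge 2$, let $c^n_t$ denote the global truncated solution from Proposition~\ref{prop:truncated-sol-moment-preserve}, and set $M_n(t) := \Mom_0(c^n_t) + \Mom_2(c^n_t)$ as well as $M(0) := \Mom_0(c_0) + \Mom_2(c_0)$, so that $M_n(0) \le M(0)$. Testing~\eqref{eq:weak-truncated} with $h(x) = 1 + x^2$, the identity $\Delta_z h(x) = 2 z^2$ together with the bounds $K_n \le \check{x}^2 \check{y}^2 \varphi(z)$ and $\check{x}^2 \le 1 + x^2$ yield
\[
\frac{\d}{\d t} M_n(t) \le 2 \iiint z^2 \check{x}^2 \check{y}^2 \varphi(z) c^n_t(x) c^n_t(y) \dx{z} \dx{x} \dx{y} \le 2 \|\varphi\|_{0,2}\, M_n(t)^2 .
\]
Integrating this Bernoulli-type inequality produces
\[
M_n(t) \le \frac{M(0)}{1 - 2 \|\varphi\|_{0,2} M(0)\, t}, \qquad t \in [0, T_0),
\]
which serves as the uniform-in-$n$ replacement for the bound of Lemma~\ref{lem:prop-lambda-moment} on every compact subinterval of $[0, T_0)$.

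With this estimate in hand, the compactness chain of Section~\ref{sec:existence}, restricted to $[0, T]$ for arbitrary $T < T_0$, goes through with $\alpha = 0$, $\lambda = 2$: uniform integrability of $(c^n_t)$ via a de la Vallée-Poussin function (Proposition~\ref{prop:UIbdd}, whose proof uses only the bound~\eqref{ass:2.derivative,K} on $\partial_1^2 K$ and the above control of $M_n$), tightness via propagation of a superlinear $\sigma_2$-moment (Proposition~\ref{prop:higher-moment-bdd}), $L^1$-equicontinuity in time, subsequential weak convergence in $C([0,T], w\mhyphen Y_{0,2})$ by Arzela-Ascoli, and identification of the limit as a weak solution as in Proposition~\ref{prop:limit-equation}. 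A diagonal argument over an exhaustion $T_k \nearrow T_0$ produces a single weak solution on $[0, T_0)$. Conservation of the zeroth moment is then immediate from admissibility of $\1_{\bR_+}$ in~\eqref{eq:swCGEDG}, and conservation of the first moment follows by the tail-truncation argument of Proposition~\ref{prop:conservation-of-mass-1.moment}, where the uniform $\Mom_2$-control replaces the $\Mom_\lambda$-control used there. The main obstacle is essentially bookkeeping: one must verify that in the regime $\alpha = 0$, $\lambda = 2$ every occurrence of $\Mom_{-\alpha,\lambda}(c^n_t)$ in Section~\ref{sec:existence} is comparable to $M_n(t)$, and that all the $\varphi$-dependent constants appearing in those estimates are controlled by $\|\varphi\|_{0,2}$, which is finite by assumption.
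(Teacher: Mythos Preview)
Your proposal is correct and follows essentially the same approach as the paper: replace the linear Gronwall bound of Lemma~\ref{lem:prop-lambda-moment} by the quadratic (Bernoulli) differential inequality for $\Mom_0+\Mom_2$ on the truncated system, then rerun the compactness chain of Section~\ref{sec:existence} with $\alpha=0$, $\lambda=2$ on compact subintervals of $[0,T_0)$. The only cosmetic difference is that you test with $1+x^2$ while the paper tests with $x^2$ and uses conservation of $\Mom_0$ separately; these yield the same bound and the same $T_0$.
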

\begin{remark}
	We note, since $c_0\in L^1(\bR_+)$, we get in particular $\Mom_2(c_0)>0$.
\end{remark}
\begin{proof}
	The argument follows closely along the lines for the existence of a lower growing kernel in Section~\ref{sec:existence}.
	Indeed, let $p_2(x)=x^2$. We use a Picard-Lindelöf argument analogous to Proposition~\ref{prop:truncated-sol-moment-preserve} to obtain the existence of solution for the truncated system, which preserves the mass and satisfies the moment bound
\[\begin{split}\frac{\d}{\d t} \int x^2 c^n_t(x) \d x &= \int (\Delta_z p_2)(x) \kappa_n[c^n](x,y,z) \d x\\
&\le 2  \iiint \d z \d x \d y\, z^2   \varphi(z) \check{x}^2 \check{y}^2 c^n_t(x) c^n_t(y)\\
&\le 2 \|\varphi\|_{0,2} \bigl(\Mom_0(c^n_0) + \Mom_2(c_t^n)\bigr)^2\\
&\le 2 \|\varphi\|_{0,2} \bigl(\Mom_0(c_0)+ \Mom_2(c^n_t)\bigr)^2.
\end{split}\]
The differential inequality implies
\[\Mom_2(c^n_t) \le \bra*{ \frac{1}{\Mom_0(c_0)+ \Mom_2(c_0)} - 2 \|\varphi\|_{0,2} t }^{-1} - \Mom_0(c_0) \] for $t <  \bra*{ 2 \|\varphi\|_{0,2} ( \Mom_0(c_0)+\Mom_2(c_0))}^{-1}=T_0$.
For $t\le T<T_0$ and setting $\alpha=0, \lambda=2$, we still have the moment bound as in Lemma \ref{lem:prop-lambda-moment} as well as the uniform integrability Proposition \ref{prop:UIbdd} and tightness from  Proposition \ref{prop:higher-moment-bdd}. Therefore, the compactness argument via Arzel\'{a}-Ascoli theorem, gives a subsequence limit $c$  and the arguments of Proposition~\ref{prop:limit-equation} and Proposition~\ref{prop:conservation-of-mass-1.moment}  show that it is a weak solution to~\eqref{eq:CGEDG} and conserves the mass and the first moment for times $t\in [0,T_0)$.
\end{proof}

\begin{lemma}[Propagation of moments]\label{lem:propofmom}
Assume $K$ satisfies   $K(x,y,z)\le \check{x}^2 \check{y}^2 \varphi(z)$, Equation  \eqref{ass:2.derivative,K} from Assumption~\ref{ass:existence} with $\alpha =0, \lambda =2$ and $\varphi \in Y^+_{0,2}$.
If $0 \nequiv c_0 \in Y^+_{0,r}$ for $r> 2$, then the weak solution to~\eqref{eq:CGEDG} constructed in Theorem~\ref{thm:finite-existence:2} on $[0,T_0)$, $T_0:=  \bra*{ 2 \|\varphi\|_{0,2}\bigl( \Mom_0(c_0)+\Mom_2(c_0)\bigr)}^{-1}$, satisfies
\begin{equation}\label{eq:highermombdd}
	\sup_{t\in [0,T]}\Mom_{r}(c_t) \le C(T),\quad \text{ for }  T< T_0 \,.
\end{equation}
\end{lemma}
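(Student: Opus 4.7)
My plan is to test the weak truncated equation~\eqref{eq:weak-truncated} against the unbounded but still admissible (on the support of $c^n_t$) test function $f=p_r$ with $p_r(x)=x^r$, derive a closed Gronwall-type inequality for $\Mom_r(c^n_t)$ whose right-hand side depends only on $\Mom_0$, $\Mom_2$, $\|\varphi\|_{0,2}$ and $\Mom_r(c^n_t)$ itself, and finally transfer the uniform-in-$n$ bound to the limit $c_t$ via the weak convergence from Proposition~\ref{prop:convergence} together with Fatou's lemma. Since $\Mom_2(c^n_t)$ and $\Mom_0(c^n_t)=\Mom_0(c_0)$ are already controlled uniformly in $n$ on $[0,T]$ with $T<T_0$ by the proof of Theorem~\ref{thm:finite-existence:2}, this will close the argument.

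The heart of the argument is a sharp pointwise estimate for $\Delta_z p_r$ that is compatible with the assumption $\varphi\in Y^+_{0,2}$ rather than $Y^+_{0,r}$. Since $p_r''(y)=r(r-1)y^{r-2}$ is non-decreasing for $r>2$, Taylor expansion gives $\Delta_z p_r(x)\le r(r-1)(x+z)^{r-2}z^2$. The key observation is that on the support of $\kappa_n[c^n_s]$ the constraint $z\le x$ from Assumption~\ref{ass:global}(ii) is in force, so $(x+z)^{r-2}\le(2x)^{r-2}$ and hence
\[
\Delta_z p_r(x)\le r(r-1)\,2^{r-2}\,x^{r-2}\,z^2\qquad\text{whenever } 0\le z\le x.
\]
This is the decisive step: the support condition converts the potentially divergent $z^r$-contribution (which would demand $\varphi\in Y^+_{0,r}$) into a factor $z^2$ that the hypothesis $\varphi\in Y^+_{0,2}$ can absorb.

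Inserting this estimate together with $K(x,y,z)\le\check{x}^2\check{y}^2\varphi(z)$ into the weak equation and performing the $z$-integration first yields
\[
\frac{\d}{\d t}\Mom_r(c^n_t)\le C_r\,\|\varphi\|_{0,2}\,\biggl(\int_0^\infty x^{r-2}\check{x}^2\,c^n_t(x)\d x\biggr)\biggl(\int_0^\infty \check{y}^2\,c^n_t(y)\d y\biggr).
\]
Splitting the $x$-integral at $x=1$ and using $x^{r-2}\le1$ on $[0,1]$ (valid since $r>2$) bounds the first factor by $\Mom_0(c^n_t)+\Mom_r(c^n_t)$, and the second by $\Mom_0(c^n_t)+\Mom_2(c^n_t)$. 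Combined with the uniform bounds on the low moments on $[0,T]$, this reduces to a differential inequality of the form $\frac{\d}{\d t}\Mom_r(c^n_t)\le C(T)\bigl(1+\Mom_r(c^n_t)\bigr)$ with $C(T)$ independent of $n$, and Gronwall together with $\Mom_r(c^n_0)\le\Mom_r(c_0)$ provides a uniform-in-$n$ bound on $[0,T]$.

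To pass to the limit, I follow the truncation-Fatou scheme used at the end of the proof of Proposition~\ref{prop:convergence}: for each fixed $L>0$, weak $L^1$-convergence of $c^n_t$ on $(0,L)$ gives $\int_0^L x^r c_t(x)\d x\le\liminf_n\int_0^L x^r c^n_t(x)\d x\le C'(T)$, and letting $L\to\infty$ via monotone/Fatou delivers~\eqref{eq:highermombdd}. The main subtlety I anticipate is not the Gronwall step itself but the justification that $p_r$ may be used as test function for the truncated equation (since $p_r\notin L^\infty(\bR_+)$); this is however harmless because on the support of $\kappa_n$ the restriction $p_r\1_{(1/n,n)}$ is bounded and the identity~\eqref{eq:weak-truncated} applies directly. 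The genuinely delicate point is the pointwise Laplacian bound combined with the support constraint $z\le x$, which is what allows the quadratic-growth kernel together with only a second moment of $\varphi$ to propagate arbitrarily high moments on the existence interval.
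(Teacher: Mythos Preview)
Your proposal is correct and takes essentially the same approach as the paper: the decisive pointwise bound $\Delta_z p_r(x)\le r(r-1)2^{r-2}z^2x^{r-2}$ on $\{z\le x\}$, the combination with the kernel bound $K\le\check{x}^2\check{y}^2\varphi(z)$, and the Gronwall closure using the uniform control of $\Mom_2$ from Theorem~\ref{thm:finite-existence:2} are identical. The only difference is the truncation device for handling $p_r\notin L^\infty$: the paper tests the limit solution $c_t$ against $h^n_r(x)=\min(x^r,n^r)\in L^\infty(\bR_+)$ and lets $n\to\infty$, whereas you test the truncated solutions $c^n_t$ against $p_r$ (bounded on their support) and pass to the limit via weak convergence plus Fatou---a cosmetic variation of the same argument.
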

\begin{remark}
	In the proof, we use from~Theorem \ref{thm:finite-existence:2} for $r=2$  the bound \begin{equation}\label{eq:uni2mombdd}
		\Mom_2(c_t) \le    \bra*{ \frac{1}{\Mom_0(c_0)+ \Mom_2(c_0)} - 2 \|\varphi\|_{0,2} t }^{-1} - \Mom_0(c_0)\qquad\forall t \in [0,T_0)  \,.
\end{equation} \end{remark}
\begin{proof}
For $n > 1$, we use $h^n_{r}(x)=\min(x^r, n^r)\in L^\infty(\bR_+)$ as test function of the weak solution and get
\[\int h^n_r(x) c_t(x) \d x - \int h^n_r(x) c_0(x) \d x= \int_0^t \d s \iiint \d z \d x \d y \, (\Delta_z h^n_r)(x) \kappa[c_s](x,y,z).\]
By applying the mean value theorem to the discrete Laplacian, using $r>2$ and the bound~\eqref{eq:uni2mombdd}, we estimate
\[\begin{split}  \int_0^t \d s& \iiint\d z \d x \d y \, (\Delta_z h^n_r)(x) \kappa[c_s](x,y,z)\\
&\le r (r-1)\int_0^t \d s \int_{0}^\infty \d z \, \varphi(z) \int_z^{n} \d x \int_z^\infty  \d y\,  (x+z)^{r-2} \check{x}^2 \check{y}^2  c_s(x) c_s(y)\\
&\le  r (r-1) 2^{r -2} \int_0^t \d s \int_{0}^\infty \d z\, \varphi(z) \int_z^{n} \d x\, x^{r-2} \check{x}^2c_s(x)  \int_z^\infty  \d y\, \check{y}^2   c_s(y)\\
&\le   r (r-1) 2^{r -2} \|\varphi\|_0 \int_0^t \d s \int_0^n \d x\, \check{x}^{r}c_s(x)  \int_0^\infty  \d y\, \check{y}^2   c_s(y)\\
&\le  r (r-1) 2^{r -2} \|\varphi\|_0 \int_0^t \d s\, \bigl(\Mom_0(c_0)+\Mom_2(c_s)\bigr)\biggl(\Mom_0(c_0)+\int_0^n \d x \, x^{r}c_s(x)\biggr) \\
&\le\frac{r (r-1) 2^{r -2} \|\varphi\|_0 }{(\Mom_0(c_0)+\Mom_2(c_0))^{-1}-2\|\varphi\|_{0,2}T} \int_0^t \d s \bra*{ \Mom_0(c_0)+\int_0^n \d x \, {x}^{r}c_s(x)}.
\end{split}\]
Hence, we arrive at the bound
\[
	\int_0^n \d x \, {x}^{r}c_t(x)\le \Mom_{r}(c_0) +\frac{r (r-1) 2^{r -2} \|\varphi\|_0 }{(\Mom_0(c_0)+\Mom_2(c_0))^{-1}-2\|\varphi\|_{0,2}T}  \bra*{ t\Mom_0(c_0)+\int_0^t \d s \int_0^n \d x \, {x}^{r}c_s(x)} \,.
\]
By Grönwall's lemma, we get with the constant $C_{\varphi,r, T,c_0}:=
\frac{r (r-1) 2^{r -2} \|\varphi\|_0 }{(\Mom_0(c_0)+\Mom_2(c_0))^{-1}-2\|\varphi\|_{0,2}T}>0$ the estimate
\[
	\int_0^n \d x\, {x}^{r}c_t(x)\le \bra[\big]{\Mom_r(c_0)+t \Mom_0(c_0) C_{\varphi,r, T,c_0}}\exp(C_{\varphi,r, T,c_0} t) \,.
 \]
By letting $n\to \infty$, we have
\[\Mom_r (c_t) \le \bra[\big]{\Mom_r(c_0)+T_0 \Mom_0(c_0) C_{\varphi,r, T,c_0}}\exp(C_{\varphi,r, T,c_0} T)\qquad \forall t\in[0,T]. \qedhere
\]
\end{proof}
\begin{proof}[Proof of Theorem \ref{thm:blow-up-time}]
For $n > 1$, let $h^n_{\mu}(x)=\min(x^\mu, n^\mu)$ be the truncated moment, which we use as a test function in the weak form~\eqref{eq:swCGEDG}, that is
\[\int h^n_\mu(x) c_t(x) \d x- \int h^n_\mu(x) c_0(x) \d x = \int_0^t \d s \iiint \d z \d x \d y \, (\Delta_z h^n_\mu)(x) \kappa[c_s](x,y,z) \,. \]
Since $h^n_\mu$ is non-decreasing and is a bounded truncation of a monomial with degree $\mu$, we have
\[\begin{split}
- z \mu x^{\mu -1 }\le (\Delta_z h^n_\mu)(x) \le h^n_\mu(x+z)-h^n_\mu(x) \le z \mu (x+z) ^{\mu -1} \le \mu 2^{\mu -1}  z x^{\mu -1}\end{split}.
\]
Therefore, for $t\in [0,T]$ with $T<T_0$, we arrive at the bound
\[\begin{split}\int_0^t \d s \iiint \d z \d x \d y & |(\Delta_z h^n_\mu)(x)| |\kappa[c_s](x,y,z)| \\
&\le
 \mu 2^{\mu -1}  \int_0^t \d s \int_0^\infty  \d z z  \varphi(z) \int_0^\infty \d x (1+x^{\mu +1}) c_s(x)\int_0^\infty  \d y (1+y^2)  c_s(y)\\
 &\le C_\mu T \|\varphi\|_{0,1} \sup_{s\in [0,T]}(\Mom_{0}(c_s)+\Mom_2(c_s)) \sup_{s\in [0,T]} ( \Mom_{0}(c_s)+\Mom_{1+\mu}(c_s)) < +\infty.
 \end{split}\]
The final bound is uniform in $n\in \bN$ because $1+\mu>2$ so that Lemma \ref{lem:propofmom} and~\eqref{eq:uni2mombdd} can be applied.
By dominated convergence, we can take the limit in $n\to \infty$ so that for $\mu \in (1,2]$, it holds
\[\int x^\mu c_t(x) \d x - \int x^\mu  c_0(x)  \d x= \int_0^t \d s \iiint \d z \d x \d y \, (\Delta_z p_\mu)(x) \kappa[c_s](x,y,z).\]
By the mean value theorem, $(\Delta_z p_\mu)(x) = \mu(\mu-1)(\theta_{x,z})^{\mu-2}z^2$, where $\theta_{x,z}\in [x-z,x+z]$ and using that the support of $K$ is on $x>z$, we obtain
\begin{equation}\begin{split}
\Mom_{\mu}(c_t) - \Mom_{\mu}(c_0)&\ge  \mu(\mu-1)2^{\mu-2} \int_0^t \d s \int_{0}^\infty \d z \, z^2\varphi_1(z)\int_z^\infty \d x \int_z^\infty \d y \, x^{\mu-2}\check{x}^{2}\check{y}^{\mu} c_s(x)c_s(y) \\
&\ge   \mu(\mu-1)2^{\mu-2}\int_0^t \d s \int_0^\infty\d z \, z^2\varphi_1(z) \bra*{\int_z^\infty \dx x \, x^{\mu} c_s(x) }^2 \,.
\end{split}\end{equation}
The squared integral can be further bounded from below as
\[\begin{split}\int_z^\infty \d x \, x^{\mu} c_s(x)  &=\int_0^\infty \d x \, x^{\mu} c_s(x) - \int_0^z \d x \, x^{\mu} c_s(x)\\
&\ge \Mom_\mu(c_s) - z^{\mu -1} \int_0^z \d x \, x c_s(x) \\
&\ge  \Mom_\mu(c_s) - z^{\mu -1}\Mom_1(c_s).
\end{split}\]
Since $\Mom_1(c_s)=\Mom_1(c_0)$, this reads $A(z):=\int_z^\infty x^\mu c_s(x)\,\d x\ge b(z):=\Mom_\mu(c_s)-z^{\mu-1}\Mom_1(c_0)$; note also $A\ge0$. Moreover, note that  $t\mapsto\Mom_\mu(c_t)$ is non-decreasing, because $(\Delta_z p_\mu)(x)\ge0$ for $\mu\ge1$, which makes the gain in the identity above non-negative.
In particular, $\Mom_\mu(c_s)\ge\Mom_\mu(c_0)$ for every $s\in[0,t]$.

We apply Jensen's inequality for the convex function $r\mapsto r^2$ with respect to the probability measure $\d\nu:=z^2\varphi_1(z)\,\d z/\Mom_2(\varphi_1)$ directly to $A$. Writing $K_0:=\frac{\Mom_{1+\mu}(\varphi_1)}{\Mom_2(\varphi_1)}\Mom_1(c_0)$, one has $\int b\,\d\nu=\Mom_\mu(c_s)-K_0$, so that
\[
\int_0^\infty z^2\varphi_1(z)\,A(z)^2\,\d z
=\Mom_2(\varphi_1)\int A^2\,\d\nu
\ge\Mom_2(\varphi_1)\biggl(\int A\,\d\nu\biggr)^2
\ge\Mom_2(\varphi_1)\bigl(\Mom_\mu(c_s)-K_0\bigr)^2,
\]
where the last step uses $\int A\,\d\nu\ge\int b\,\d\nu=\Mom_\mu(c_s)-K_0\ge0$, valid because $\Mom_\mu(c_s)\ge\Mom_\mu(c_0)>K_0$ by the assumption of the theorem. Substituting into the gain bound above yields
\begin{equation}\begin{split}
\Mom_{\mu}(c_t) - \Mom_{\mu}(c_0)
&\ge \mu(\mu-1)2^{\mu-2}\int_0^t \d s \int_0^\infty\d z\, z^2 \varphi_1(z)\bra*{\int_z^\infty x^\mu c_s(x)\,\d x}^2\\
&\ge  \mu(\mu-1)2^{\mu-2} \Mom_2(\varphi_1) \int_0^t \d s \, \bigl(\Mom_{\mu}(c_s) - K_0\bigr)^2 \,.
\end{split}
\end{equation}
Setting $\psi(s):=\Mom_\mu(c_s)-K_0$, we have $\psi(0)=\Mom_\mu(c_0)-K_0>0$ by assumption, and the previous display reads $\psi(t)-\psi(0)\ge\mu(\mu-1)2^{\mu-2}\Mom_2(\varphi_1)\int_0^t\psi(s)^2\,\d s$.
Since the integral equation $g(t) - g(0) = c \int_0^t \d s \, g(s)^2 $ with constant $c$  has  the solution
\[ g(t) = ( g(0)^{-1} - c t )^{-1},\]
we get by the comparison principle
\[\Mom_{\mu}(c_t) \ge \bra[\Bigg]{\frac{1}{\Mom_{\mu}(c_0)-\frac{\Mom_{1+\mu}(\varphi_1)}{\Mom_2(\varphi_1)}\Mom_1(c_0)} - \mu(\mu-1)2^{\mu-2}\Mom_2(\varphi_1)  t }^{-1}+\frac{\Mom_{1+\mu}(\varphi_1)}{\Mom_2(\varphi_1)}\Mom_1(c_0).\]
Hence the moment $\Mom_{\mu}(c_t)$ blows up at time \[ \bra*{\mu(\mu-1)2^{\mu-2}\Mom_2(\varphi_1)  \bra*{\Mom_{\mu}(c_0)-\frac{\Mom_{1+\mu}(\varphi_1)}{\Mom_2(\varphi_1)}\Mom_1(c_0)}}^{-1},\]
 concluding the proof.
\end{proof}
\section{Instantaneous gelation}
As in the finite-time gelation, the strategy here is to show the blow-up of moments. We observe that the second moment is non-decreasing.
\begin{lemma}\label{lem:non-dec-M2}
 Let $(c_t)_{t\ge 0}$ be a weak solution of~\eqref{eq:CGEDG} in $Y_{0,2}^+$ on $[0,T)$ and $0<T\le T_{gel}$.
 Suppose the solution satisfies the bound
 \[\int_0^t \d s \int \d z  \d x \d y \, z^2  \kappa[c_s](x,y,z) <+\infty.\]
 Then $\Mom_0(c(t))=\Mom_0(c(0))$ and $\Mom_1(c(t))= \Mom_1(c(0))$ for $t \in [0,T)$ and $t\mapsto \Mom_2(c(t))$ is non-decreasing on $[0,T)$.
\end{lemma}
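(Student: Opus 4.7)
The plan is to test the weak formulation~\eqref{eq:swCGEDG} against three carefully chosen test functions: a constant for mass, a linear truncation for the first moment, and a smoothly cut-off quadratic for the second moment. Throughout, the standing hypotheses supply two integrability facts: $\iiint \kappa < +\infty$ in time (from condition~(b) of Definition~\ref{def:weaksolution}) and $\iiint z^2 \kappa <+\infty$ in time (from the assumption). Together with $z \le \tfrac{1}{2}(1+z^2)$, these also give $\iiint z\,\kappa < +\infty$, which is what is needed for the linear test function.

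First, taking $f \equiv 1 \in L^\infty(\bR_+)$ in~\eqref{eq:swCGEDG} gives $(\Delta_z f)(x) \equiv 0$, hence $\Mom_0(c_t) = \Mom_0(c_0)$. For the first moment, I use the truncation $f_n(x) := x \wedge n \in L^\infty(\bR_+)$. A direct case split on whether $x \pm z$ lie above or below $n$ shows $\Delta_z f_n(x) = 0$ unless $x$ lies in a $z$-neighbourhood of $n$, and in all cases $|\Delta_z f_n(x)| \le 2z$; moreover $\Delta_z f_n(x) \to 0$ pointwise for every fixed $(x,z)$. Dominated convergence against $2z\,\kappa$ sends the right-hand side of~\eqref{eq:swCGEDG} to $0$, while monotone convergence sends the left-hand side to $\Mom_1(c_t) - \Mom_1(c_0)$ (note $\Mom_1(c_t) < +\infty$ since $c_t \in Y_{0,2}^+$ and $x \le 1 + x^2$). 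This yields conservation of $\Mom_1$.

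For the monotonicity of $\Mom_2$, I pick a smooth cutoff $\phi_n \in C_c^\infty(\bR_+)$ with $\phi_n \equiv 1$ on $[0,n]$, $\operatorname{supp} \phi_n \subset [0,2n]$, $\|\phi_n'\|_\infty \le 2/n$, $\|\phi_n''\|_\infty \le C/n^2$, and set $f_n(x) := x^2 \phi_n(x) \in L^\infty$. Computing $f_n'' = 2\phi_n + 4x\phi_n' + x^2 \phi_n''$ on the three regions $[0,n]$, $[n,2n]$, $[2n,\infty)$ yields $\|f_n''\|_\infty \le C$ uniformly in $n$, whence the Taylor-type bound $|\Delta_z f_n(x)| \le C z^2$ holds uniformly. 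Since $\Delta_z f_n(x) \to 2 z^2$ pointwise, dominated convergence with dominating function $C z^2 \kappa[c_r]$ (integrable by hypothesis) applied to the version of~\eqref{eq:swCGEDG} between times $s$ and $t$ gives in the limit
\begin{equation*}
\Mom_2(c_t) - \Mom_2(c_s) \;=\; 2 \int_s^t \!\!\mathrm{d}r \iiint \mathrm{d}z\,\mathrm{d}x\,\mathrm{d}y\; z^2\,\kappa[c_r](x,y,z) \;\ge\; 0,
\end{equation*}
establishing monotonicity. The main technical point is verifying the uniform bound $\|f_n''\|_\infty \le C$, which is the reason a smooth cutoff is preferable to the naive choice $\min(x^2, n^2)$ (whose discrete Laplacian can change sign near $x = n$ and is only bounded by $Cnz$).
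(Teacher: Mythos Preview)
Your proof is correct and follows the same overall strategy as the paper: truncate the moment test functions to lie in $L^\infty$, control $\Delta_z f_n$ uniformly, and pass to the limit by dominated convergence. The difference is tactical. The paper builds a single $C^2$ truncation scheme (via interpolation and reflection) that applies to any $f=g+m$ with $g''$ bounded, and then uses the uniform bound $|\Delta_z f_n|\le C_f z^2$ for both $f(x)=x$ and $f(x)=x^2$ simultaneously; in particular the paper's $\Mom_1$ argument also runs on the $z^2$-integrability hypothesis alone. You instead treat the two moments separately: the Lipschitz truncation $x\wedge n$ for $\Mom_1$ (with the bound $|\Delta_z f_n|\le 2z$, which forces you to interpolate $z\le\tfrac12(1+z^2)$ and invoke condition~(b) of Definition~\ref{def:weaksolution}), and the smooth cutoff $x^2\phi_n(x)$ for $\Mom_2$. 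Your route avoids the paper's somewhat delicate $C^2$ extension construction and is more elementary; the paper's route is more unified and needs only the single $z^2$ integrability assumption.
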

\begin{proof}
First, for the weak solutions, we can test with the constant function $h=1$ and see that $\Mom_0(c(t))=\Mom_0(c(0))$. Now, since $(c_t)\in  Y_{0,2}^+$, we can extend the test function classes to functions of the form $f(x) = g(x)+m$ where $m$ is a constant and $g$ has uniformly bounded second derivative.
Consider the truncation with bounded first and second derivative, satisfying $f_n(x) = f(x) $ for $x\in [0,n]$, $f(x)=0$ for $x \ge 7n$  and $\|f''_n\|_{L^\infty}\le C_f$, $\|f'_n\|_{L^\infty} \le \|f'\|_{L^\infty}$ for $n$ large enough, where $C_f >0$ a constant depends only on $f\vert_{[0,1]},\|f''\|_{L^\infty}$.

Indeed, given $f$ with bounded first and second derivatives, then there exists a $C^2$ interpolation of $f \vert_{[0,n]}$, $f_1$ to the left so that $f'_1$ is monotone on $[-a,0]$ towards $f_1''(-a)=f_1'(-a)=0$.  By a constant shift $\tilde f = f - f_1(-a)$, we have $\tilde f(-a) = 0$. And we will drop $\tilde{}$ from now on. Then by a 180-degree rotation of the graph of $f_1$ at $(n,f(n))$, we extend $f_1$ to $[-a,2n+a]$. We call this extension $f_2$. Further, by a reflection of the graph of $f_2$ along the line $x= 2n+a$, we obtain $f_3$ by extending $f_2$ to $[-a,4n+3a]$. Note that by construction this extension $f_3(4n+3a)=f_3'(4n+3a)=f_3''(4n+3a)= 0$ so that we define $f_4\in C^2$ by extending $f_3$ to zero on $\bR_+$ for $x \ge 4n+3a$. By choosing $n$ larger than $a$, this gives a desired interpolation of $f$ on~$[0,7n]$.

With these preliminary considerations, we use $f_n$ as a test function in the weak form
\[
\int_0^\infty f_n (x) [c_t( x) - c_0( x) ]\d x  = \int_0^t \d s \iiint  \d z \d x \d y  \, (\Delta_z f_n)(x) \kappa[c_s](x,y,z)\, .\]
Since $|f_n(x)| \le  \|f'\|_{L^\infty} x + |f(0)|$, we get on the one hand
\[\int_0^\infty |f_n (x)|c_t( x) \d x\le  \int_0^\infty (\|f'\|_{L^\infty} x + |f(0)|)  c_t(x) \d x <+\infty \]
and since by construction $|(\Delta_z f_n)|\le  C_f z^2 $, we get on the other hand
\[\begin{split} \int_0^t \d s \iiint  \d z \d x \d y  \, |(\Delta_z f_n)(x) | \kappa[c_s](x,y,z) \le C_f  \int_0^t \d s \iiint  \d z  \d x \d y\, z^2  \kappa[c_s](x,y,z)
<+\infty. \end{split}\]
Hence, by dominated convergence, functions with bounded first and second derivatives are admissible in the weak form~\eqref{eq:swCGEDG}.
In particular, we obtain $\Mom_1(c(t))=\Mom_1(c(0))$ by taking $h(x)=x$ and for $h(x) = x^2$ we get
\[
\Mom_2(c(t))-\Mom_2(c(0)) = 2  \int_0^t \d s \iiint  \d z \d x \d y  \, z^2 \kappa[c_s](x,y,z)\ge 0,
\]
which implies $t\mapsto\Mom_2(c(t))$ is non-decreasing.
\end{proof}
The estimate for the instantaneous gelation is based on the representation of moments via the tail distribution and derives the evolution of those in the next lemma.
\begin{lemma}[Evolution of weighted tail distributions]\label{lem:tail}
Let $g$ be an admissible test function for the weak form~\eqref{eq:swCGEDG}, which is locally bounded.
Then the following representation of the tail distribution for $\ell \ge 0$ holds
\begin{align}\label{eq: test-function-tail}
	\int_\ell^\infty \d x \, g(x)\bigl( c_t (x) - c_0(x) \bigr)
	&= \int_0^t \d s \iiint \d z \d x \d y \, \Bigl[ \bigl(\Delta_z g\bigr)(x) \1_{[\ell,\infty)}(x)   \kappa[c_s](x,y,z) \\
&\quad + g(x)\bra*{\1_{[\ell,\ell+z]}(x)\kappa[c_s](x-z,y,z) -\1_{[\ell-z,\ell]}(x) \kappa[c_s](x+z,y,z)} \Bigr]. \nonumber
\end{align}
\end{lemma}
\begin{proof}
We use the definition of weak solutions~\eqref{eq:swCGEDG} with test functions $g$ and $g\1_{[0,\ell]}\in L^\infty(\bR_+)$ and the chain rule for the discrete Laplacian~\eqref{eq:id:discretelaplace} to get
\[\begin{split}
	\MoveEqLeft\int_\ell^\infty \d x \, g(x) ( c_t (x) - c_0(x) ) = \int_0^\infty \d x \,g(x)( c_t (x) - c_0(x) ) - \int_0^\ell \d x \, g(x)( c_t (x) - c_0(x) )  \\
&\begin{multlined}
	= \int_0^t \d s \iiint \d z \d x \d y \Bigl[ (\Delta_z g)(x) \1_{[\ell,\infty)}(x) \\
 +  g(x+z) \1_{[\ell,\ell+z]}(x+z) - g(x-z) \1_{[\ell-z,\ell]}(x-z)\Bigr] \kappa[c_s](x,y,z)
\end{multlined}\\
&\begin{multlined}
	=\int_0^t \d s \iiint \d z \d x \d y \Bigl[ (\Delta_z g)(x) \1_{[\ell,\infty)}(x) \kappa[c_s](x,y,z) \\
	+ g(x)\bra*{\1_{[\ell,\ell+z]}(x)\kappa[c_s](x-z,y,z) -\1_{[\ell-z,\ell]}(x) \kappa[c_s](x+z,y,z)} \Bigr] \,,
\end{multlined}
\end{split} \]
which is the claimed identity
\end{proof}
\begin{lemma}\label{lem:propagation of moment for lower bdd kernel}
Assume $K$ satisfies $K(x,y,z)\ge (x^{\beta}+y^{\beta}) \varphi_1(z)\1_{(z,+\infty)^2}(x,y)$ with $\varphi_1\in Y_{0,2}^+$ satisfying~\eqref{eq:Hcond} and $\beta >2$.
For $c_0\in  Y_{0,2}^+$ with $\Mom_0(c_0)>0$ let $(c_t)_{t\ge 0}$ be any weak solution of~\eqref{eq:CGEDG} on $[0,T)$ for $0<T\le T_{gel}$ in $ Y_{0,2}^+$.
Then, the solutions satisfies for any $p \ge 1 $, $\sup_{s \in [0,t]}\Mom_p(c_s)<+\infty$  for all $t\in [0,T)$.
\end{lemma}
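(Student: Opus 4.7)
The plan is to bootstrap from the a priori finiteness of $\Mom_2$ on $[0,T)$, as granted by $T\le T_{gel}$, using the strong coercive lower bound on $K$.

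I first apply Lemma~\ref{lem:non-dec-M2}. To verify its hypothesis $\int_0^t\iiint z^2\kappa[c_s]\,dz\,dx\,dy\,ds<\infty$, I test the weak form with the truncation $f_n(x)=\min(x^2,n^2)$, for which the uniform pointwise bound $|\Delta_z f_n|\le 2z^2$ holds; dominated convergence then yields $\Mom_2(c_t)-\Mom_2(c_0)=2\int_0^t\iiint z^2\kappa[c_s]<\infty$. Lemma~\ref{lem:non-dec-M2} then gives conservation of $\Mom_0$ and $\Mom_1$ as well as the monotonicity $\Mom_2(c_s)\le\Mom_2(c_t)$ for $s\in[0,t]$. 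Using the conserved quantity $\Mom_1(c_0)>0$ (which holds since $c_0\not\equiv 0$ is in $L^1(\bR_+)$) together with a Chebyshev-type decomposition
\[\Mom_1(c_0)\le \eta\,\Mom_0(c_0)+\int_\eta^R y\,c_s(y)\,dy+\Mom_2(c_t)/R\]
for $\eta,R>0$ depending only on $\Mom_0(c_0),\Mom_1(c_0),\Mom_2(c_t)$, one obtains a uniform tail lower bound $\inf_{s\in[0,t]}\int_\eta^\infty c_s(y)\,dy\ge c_\star>0$.

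Combining this with the weak-solution integrability $\int_0^t\iiint \kappa[c_s]<\infty$ and the lower bound $K(x,y,z)\ge(x^\beta+y^\beta)\varphi_1(z)$, supported on $\{z\le\min(x,y)\}$ thanks to the symmetry of $K$, symmetrization in $x\leftrightarrow y$ delivers
\[\infty>\int_0^t\iiint \kappa[c_s](x,y,z)\,dz\,dx\,dy\,ds\ge 2c_\star\Bigl(\int_0^\eta\varphi_1(z)\,dz\Bigr)\int_0^t\int_\eta^\infty x^\beta c_s(x)\,dx\,ds,\]
so $\int_0^t\Mom_\beta(c_s)\,ds<\infty$.

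To upgrade this time-averaged bound to pointwise finiteness, I use that $t\mapsto\Mom_p(c_t)$ is non-decreasing for every $p\ge 1$: convexity of $x\mapsto x^p$ gives $\Delta_z x^p\ge 0$ on $\{z\le x\}$, and testing with $f_n(x)=x^p\wedge n^p$ (for which the pointwise bound $\Delta_z f_n\le\Delta_z x^p$ holds) followed by passing to the limit $n\to\infty$ yields $\Mom_p(c_t)\ge\Mom_p(c_s)$ for $s\le t$. Since non-decreasing functions a.e.\ finite on $[0,T)$ are pointwise finite there, $\Mom_\beta(c_t)<\infty$ for every $t\in[0,T)$. The bootstrap then iterates: with $\Delta_z x^2=2z^2$ replaced by the lower bound $\Delta_z x^q\ge C_q z^2 x^{q-2}$ (valid for $z\le x/2$), the pointwise finiteness of $\Mom_q$ on $[0,T)$ together with the identity $\Mom_q(c_t)-\Mom_q(c_0)=\int_0^t\iiint \Delta_z x^q\,\kappa[c_s]$ and the lower bound on $K$ yields $\int_0^t\Mom_{q+\beta-2}(c_s)\,ds<\infty$, which is upgraded to pointwise finiteness by the same monotonicity argument. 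Since $\beta>2$, each iteration gains $\beta-2$ in the moment exponent and thus exhausts any $p\ge 1$ in finitely many steps. The principal technical obstacle is the rigorous justification of the truncation $f_n=x^p\wedge n^p$ in the weak form for establishing the monotonicity of $\Mom_p$: while $\Delta_z f_n\le\Delta_z x^p$ holds pointwise, the mixed sign of $\Delta_z f_n$ near the cutoff $x\sim n$ necessitates a careful dominated convergence argument.
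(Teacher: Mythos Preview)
Your approach differs from the paper's. The paper applies the tail-evolution Lemma~\ref{lem:tail} with $g(x)=x^2-n^2$: the boundary terms have a favorable sign, and the lower bound on $K$ turns the identity into $F_n(t')-F_n(r)\ge c\,n^{\beta-2}\int_r^{t'}F_n(s)\,ds$ for $F_n(r)=\int_n^\infty(x^2-n^2)c_r(x)\,dx$ and all $r\le t'<T$. This is a genuine backward Gronwall (equivalently $F_n'\ge c\,n^{\beta-2}F_n$ a.e.), giving $F_n(r)\le F_n(t)\exp\bigl(-c\,n^{\beta-2}(t-r)\bigr)$, and the exponential tail decay delivers $\Mom_p(c_r)<\infty$ for \emph{every} $p$ simultaneously via a layer-cake integration---no bootstrap is needed. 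Your route (extract $\int_0^t\Mom_\beta(c_s)\,ds<\infty$ from integrability condition~(b), upgrade to pointwise via monotonicity of moments, then iterate in steps of $\beta-2$) is more indirect but conceptually sound.

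There are, however, two technical gaps sharing a common root cause. Verifying the hypothesis of Lemma~\ref{lem:non-dec-M2} by dominated convergence with $f_n=x^2\wedge n^2$ is circular: the proposed majorant $2z^2\kappa$ is precisely what you want to show integrable. The monotonicity of $\Mom_p$ via $f_n=x^p\wedge n^p$ has the same defect (no upper bound on $K$ is assumed here, so no DCT majorant is available), which you rightly flag as the principal obstacle. Both are repaired by using \emph{convex} truncations instead. For the first, take a bounded convex $C^2$ function $g_n$ with $g_n=x^2$ on $[0,n]$ and $\|g_n''\|_\infty\le C$; then $\Delta_z g_n\ge0$, and Fatou gives $\int_0^t\iiint 2z^2\kappa\le\liminf_n\int_0^t\iiint\Delta_z g_n\,\kappa=\Mom_2(c_t)-\Mom_2(c_0)<\infty$. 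For the second, once Lemma~\ref{lem:non-dec-M2} is established its proof enlarges the admissible test-function class to functions with bounded first and second derivative; then the convex affine extension $g_n(x)=x^p$ on $[0,n]$, $g_n(x)=n^p+pn^{p-1}(x-n)$ on $(n,\infty)$ is admissible, convexity gives $\Delta_z g_n\ge0$ and hence $\int g_n c_t\ge\int g_n c_s$, and since $g_n\uparrow x^p$ monotone convergence yields $\Mom_p(c_t)\ge\Mom_p(c_s)$ in $[0,\infty]$. With these fixes your bootstrap goes through, though it remains longer than the paper's one-shot tail estimate.
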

\begin{proof}
Let $\ell>0$ be an arbitrary level. We apply Lemma~\ref{lem:tail} with the test function $p_2(x)-\ell^2$ and note that $(\Delta_z (p_2 - \ell^2)) (x) = 2z^2$. In this way, we get for $0\leq r < t \leq T$ that
\begin{equation*}\begin{split}
\MoveEqLeft \int_\ell^\infty \d x \, (x^2- \ell^2)(c_t(x)-c_{r}(x)) = \int_{r}^t  \d s \iiint \d z  \d x \d y \, 2 z^2 \1_{[\ell,\infty)}(x) \kappa[c_s](x,y,z) \\
&\quad + \int_{r}^t  \d s \iiint \d z \d x \d y  \, (x^2 - \ell^2) \bra*{\1_{[\ell,\ell+z]}(x)\kappa[c_s](x-z,y,z) -\1_{[\ell-z,\ell]}(x) \kappa[c_s](x+z,y,z)} \\
&\ge \int_{r}^t  \d s \iiint \d z  \d x \d y  \, 2 z^2 \1_{[\ell,\infty)}(x) \kappa[c_s](x,y,z).
\end{split}\end{equation*}
In the last inequality, we used that $x^2 - \ell^2 \ge 0$ on $ x\in [\ell,\ell+z]$ and $-(x^2 -\ell^2) \ge 0$ on  $x\in [\ell-z,\ell]$.
Next, by abbreviating $W_r:=\int_\ell^\infty (x^2-\ell^2)\, c_r(x)\,\d x\ge 0$ and using the assumption $K(x,y,z)\ge \varphi_1(z) x^{\beta}$ on its support $\{z\le x\wedge y\}$, we obtain for $0\le r\le t<T$ the bound
\[
W_t-W_r \ge 2\int_r^t \d s \int_\ell^\infty \d x\, x^{\beta} c_s(x)\int_0^{x}\d z\, z^2\varphi_1(z)\int_z^\infty \d y\, c_s(y).
\]%
By Definition~\ref{def:weaksolution}, the function $c\in C([0,T];L^1)$ is uniformly integrable on $[0,T]$ and we find $z_\ast\in(0,\tfrac12]$ with $\inf_{s\in[0,T]}\int_{z_\ast}^\infty c_s(y)\,\d y\ge\tfrac12\Mom_0(c_0)$. Assuming $\ell\ge z_\ast$ and keeping only the contributions of $z\le z_\ast$ and $y\ge z_\ast$, together with $x^{\beta}\ge \ell^{\beta-2}x^2\ge \ell^{\beta-2}(x^2-\ell^2)$ on $[\ell,\infty)$, this gives, since $\int_0^{z_\ast}z^2\varphi_1(z)\,\d z>0$ by~\eqref{eq:Hcond} (as $z^2>0$ for $z>0$), the Gr\"onwall inequality
\[
W_t-W_r\ge a_\ell\int_r^t W_s\,\d s\,,\qquad a_\ell:=\Mom_0(c_0)\Bigl(\int_0^{z_\ast}z^2\varphi_1(z)\,\d z\Bigr)\ell^{\beta-2}>0\,,
\]
applied to the non-negative map $t\mapsto W_t$ on $[r,T]$ (with $r$ fixed), which gives $W_t\ge W_r \exp(a_\ell(t-r))$. Since $W_t\le \Mom_2(c_t)\le \Mom_2(c_T)$ by the non-decreasing property of the second moment from Lemma~\ref{lem:non-dec-M2}, we conclude that
\begin{equation}\label{eq:tail-W-bound}
\int_\ell^\infty (x^2-\ell^2)\, c_r(x)\,\d x \le \Mom_2(c_T)\, \exp\bigl(-a_\ell(t-r)\bigr)\qquad\text{for every $\ell\ge z_\ast$.}
\end{equation}%
Since $y^2\le\frac43(y^2-\ell^2)$ for $y\ge 2\ell$ while $y^2-\ell^2\ge 0$ for $y\ge \ell$, the bound~\eqref{eq:tail-W-bound} yields
\begin{equation}\label{eq:tail-2-moment-bdd}
\int_{2\ell}^\infty y^2 c_r(y)\,\d y \le \frac43 \int_\ell^\infty (y^2-\ell^2)\, c_r(y)\,\d y \le \frac43 \Mom_2(c_T)\exp\bigl(-a_\ell(t-r)\bigr)
\qquad\text{for every $\ell\ge z_\ast$.}
\end{equation}
In the case $p\ge 2$, we decompose $[1,\infty)$ dyadically and apply~\eqref{eq:tail-2-moment-bdd} at the levels $\ell=2^{k-1}$ to obtain
\begin{align}\label{eq:p-moment-uniform-bdd}
\int_1^\infty x^p c_r(x)\,\d x &= \sum_{k=0}^\infty \int_{2^k}^{2^{k+1}} x^p c_r(x)\,\d x \le \sum_{k=0}^\infty 2^{(k+1)(p-2)} \int_{2^k}^\infty x^2 c_r(x)\,\d x \\
&\le \frac43 \Mom_2(c_T)\sum_{k=0}^\infty 2^{(k+1)(p-2)} \exp\bigl(-a_{2^{k-1}}(t-r)\bigr).\nonumber
\end{align}
Since $\beta>2$, the exponent $a_{2^{k-1}}=\Mom_0(c_0)\bigl(\int_0^{z_\ast}z^2\varphi_1(z)\,\d z\bigr)\,2^{(k-1)(\beta-2)}$ grows geometrically in $k$ and hence dominates the prefactor $2^{(k+1)(p-2)}$, so the series in~\eqref{eq:p-moment-uniform-bdd} converges. Note that by the same argument, for every $t <T$, we can find a uniform bound for $0 \le s\le t < T $.  Together with $\int_0^1 x^p c_r(x)\,\d x\le \Mom_0(c_0)<+\infty$, this gives for $0\le t < T$,  $\sup_{s \in [0,t]} \Mom_p(c_s)<+\infty$ for $p\ge 2$; for $1\le p<2$ the H\"older inequality gives $\Mom_p(c_r)\le \Mom_0(c_r)^{1-p/2}\Mom_2(c_r)^{p/2}<+\infty$.
 Therefore,  $\sup_{s \in [0,t]}\Mom_p(c_s)<+\infty$   for $p\ge 1$, for $0 \le t <T$.
\end{proof}
The next estimate is the continuous counterpart of the discrete second--difference bound $(j+1)^n-2j^n+(j-1)^n\ge n(n-1)j^{n-2}$ of~\cite[Proof of Theorem~2.9]{si2024existence}, with the sharp constant $n(n-1)$. The sharpness, i.e.\ the prefactor $1$, is essential, since replacing it by any factor that decreases with $n$ would destroy the vanishing of the blow-up time $T_n$ as $n\to\infty$.
\begin{lemma}\label{lem:secdiff}
For every $n\ge2$ and all $x\ge z\ge0$, writing $P_n(x):=(1+x)^n$,
\[(\Delta_z P_n)(x)\ge n(n-1)(1+x)^{n-2}z^2.\]
\end{lemma}
\begin{proof}
Let us define $w:=1+x\ge1$ and $f(w):=w^n$, so that $(\Delta_z P_n)(x)=f(w+z)-2f(w)+f(w-z)$ with $w-z=1+x-z\ge1$. Since $f(w+z)-f(w)=\int_0^z f'(w+s)\,\d s$ and $f(w)-f(w-z)=\int_{-z}^0 f'(w+s)\,\d s$, we have the identity
\[(\Delta_z P_n)(x)=\int_0^z\bigl(f'(w+s)-f'(w-s)\bigr)\,\d s=\int_{-z}^z(z-|u|)\,f''(w+u)\,\d u.\]
The weight $(z-|u|)\,\d u$ is non-negative and symmetric on $[-z,z]$, with total mass $\int_{-z}^z(z-|u|)\,\d u=z^2$. As $f''(w+u)=n(n-1)(w+u)^{n-2}$ and $u\mapsto(w+u)^{n-2}$ is convex (because $n-2\ge0$ and $w-z>0$), Jensen's inequality yields $\int_{-z}^z(z-|u|)(w+u)^{n-2}\,\d u\ge z^2 w^{n-2}$, which is the claim.
\end{proof}
\begin{proof}[Proof of Theorem \ref{thm:instant-gel}] We argue by contradiction, adapting the discrete moment-blow-up argument of~\cite[Theorem~2.9]{si2024existence}. Suppose a weak solution $c$ to~\eqref{eq:CGEDG} in $Y_{0,2}^+$ exists on $[0,T)$ with $0<T\le T_{gel}$, and fix $t\in(0,T)$. On $[0,t]$ the mass and first moment are conserved, $\Mom_0(c_s)=\Mom_0(c_0)>0$ and $\Mom_1(c_s)=\Mom_1(c_0)$ by Lemma~\ref{lem:non-dec-M2} and all moments are finite with $\sup_{s\in[0,t]}\Mom_p(c_s)<\infty$ for every $p\ge1$ by Lemma~\ref{lem:propagation of moment for lower bdd kernel}.
Moreover, as a weak solution $\{c_s\}_{s\in[0,t]}$ is uniformly integrable, being relatively compact in $L^1(\bR_+)$.
Let $p_m(x) =x^{m}$, $m\in\bN$, $m \ge 2$.
We consider the test function $p_m \1_{[0,n]}$ in the weak form
\[\int_0^n p_m(x) [c_t( x) - c_0( x) ]\d x  = \int_0^t \d s \iiint \d z \d x \d y  \, (\Delta_z p_m)(x)\1_{[0,n]}(x) \kappa[c_s](x,y,z) + \text{ bdry.~terms}.
\]
The boundary terms are from the product identity~\eqref{eq:id:discretelaplace} for the discrete Laplacian, see also the proof of Theorem \ref{thm:uniquness}. On the support of $K$ we have $z\le x$, hence by the mean value theorem $(\Delta_z p_m)(x)=m(m-1)\theta_{x,z}^{m-2}z^2\le m(m-1)2^{m-2}z^2 x^{m-2}$. Together with $K(x,y,z)\le\varphi(z)(\check{x}^k+\check{y}^k)$, the integrand $(\Delta_z p_m)(x)\kappa[c_s](x,y,z)$ is dominated by
\[m(m-1)2^{m-2}\, z^2\varphi(z)\, \check{x}^{m-2}(\check{x}^k+\check{y}^k)\, c_s(x)c_s(y),\]
whose integral is a finite combination of moments $\Mom_j(c_s)$ with $0\le j\le m+k-2$. These are uniformly bounded for $s\le t<T$ by~\eqref{eq:p-moment-uniform-bdd} of Lemma \ref{lem:propagation of moment for lower bdd kernel}, so dominated convergence applies. Therefore for each $t<T$, we have the convergence to
\[
\int_0^t \d s \iiint  \d z \d x \d y  \, (\Delta_z p_m)(x) \kappa[c_s](x,y,z).\]
Similar to the argument of \eqref{eq:domiantedbound}, the upper bound of  $K(x,y,z) \le\varphi(z)( \check{x}^{k} + \check{y}^k )$ implies that the boundary terms with $p_{m}(x+z)\1_{[n-z,n]}(x)$ and $p_{m}(x-z)\1_{[n,n+z]}(x)$ at time $s$ of the truncated function $\1_{[0,n]}(x)p_{m}(x) $  can be bounded by
\[C_{m} \int \d z \, \varphi(z) \int_{n/2}^n  \d x \d y \, x^{m} (x^{k}+ y^{k}) c_s(x) c_s(y)\] for some constant $C_m >0$, which vanishes as $n\to \infty$ if $c_s \in Y_{m + k}^+ $ for all $s<t<T$. The latter is again guaranteed by Lemma \ref{lem:propagation of moment for lower bdd kernel}.  Then, via a dominated convergence argument on the time integral, we see that the boundary terms vanish. Therefore, $p_m$ is an admissible test function of the weak solution.

By linearity, every $(1+x)^n$, $n\in\bN$, is an admissible, locally bounded test function as well, so Lemma~\ref{lem:tail} applies to it.
By the uniform integrability there is $z_\ast\in(0,1)$ with
\begin{equation}\label{eq:UIrecip}
\delta:=\tfrac12\Mom_0(c_0)\le\int_{z_\ast}^\infty c_s(y)\,\d y\qquad\text{for all }s\in[0,t],
\end{equation}
and, by~\eqref{eq:Hcond} (as $z^2>0$ for $z>0$), $G:=\int_0^{z_\ast}z^2\varphi_1(z)\,\d z>0$; the constants $z_\ast,\delta,G$ are independent of $n$. For $p\ge0$,
we define the tail moment $\mathcal{Y}_p(s):=\int_{z_\ast}^\infty(1+x)^p c_s(x)\,\d x$, so that $\mathcal{Y}_p(s)\ge(1+z_\ast)^p\delta$ by~\eqref{eq:UIrecip}.

Next, we fix $n\ge2$ and apply Lemma~\ref{lem:tail} with the test function $(1+x)^n$ and level $z_\ast$ for $0\le\sigma\le\tau\le t$ to arrive at
\[
	\mathcal{Y}_n(\tau)-\mathcal{Y}_n(\sigma)\ge\int_\sigma^\tau\bigl(\mathcal D_n (s)-\mathcal B_n(s)\bigr)\d s \,,
\]
where the non-negative contribution carrying $\1_{[z_\ast,z_\ast+z]}(x)$ in \eqref{eq: test-function-tail} has been dropped and
\begin{align*}
	\mathcal D_n(s) &:= \iiint \d z \d x \d y \bigl(\Delta_z(1+\cdot)^n\bigr)(x)\,\1_{[z_\ast,\infty)}(x)\,\kappa[c_s](x,y,z) \,, \\
	\mathcal B_n(s) &:= \iiint \d z \d x \d y (1+x)^n\,\1_{[z_\ast-z,z_\ast]}(x)\,\kappa[c_s](x+z,y,z) \,.
\end{align*}
On the support $\{z\le x\wedge y\}$ of $K$ one has $\check x\ge\tfrac{1+x}{2}$, hence $K(x,y,z)\ge\varphi_1(z)\check x^\beta\ge2^{-\beta}\varphi_1(z)(1+x)^\beta$. Applying together with Lemma~\ref{lem:secdiff} and $\int_0^{x\wedge y}z^2\varphi_1(z) \d z \ge G$ for $x,y\ge z_\ast$, we get
\begin{equation}\label{eq:gainbd}
\mathcal D_n(s)\ge2^{-\beta}n(n-1)G\int_{z_\ast}^\infty(1+x)^{n+\beta-2}c_s(x)\,\d x\int_{z_\ast}^\infty c_s(y)\,\d y\ge2^{-\beta}n(n-1)G\delta\,\mathcal{Y}_{n+\beta-2}(s).
\end{equation}
We define $\Lambda:=\tfrac{\beta-2}{n-1}>0$ and, on $[z_\ast,\infty)$, the probability measure $\d\mu_s(x):=\mathcal{Y}_1(s)^{-1}(1+x)c_s(x)\,\d x$  to get the identity
\[
\int (1+x)^{n-1} \,\d\mu_s(x)=\frac{1}{\mathcal{Y}_1(s)}\int_{z_\ast}^\infty(1+x)^n c_s(x)\,\d x=\frac{\mathcal{Y}_n(s)}{\mathcal{Y}_1(s)} \,.
\]
We now apply the Jensen's inequality $\int\Psi(X)\,\d\mu_s\ge\Psi\bigl(\int X\,\d\mu_s\bigr)$ with the convex function $\Psi(r):=r^{1+\Lambda}$ and for $X(x):=(1+x)^{n-1}$  and note that the identity $1+(n-1)(1+\Lambda)=n+\beta-2$ yields
\[
\int \Psi(X)\,\d\mu_s=\frac{1}{\mathcal{Y}_1(s)}\int_{z_\ast}^\infty(1+x)^{n+\beta-2}c_s(x)\,\d x=\frac{\mathcal{Y}_{n+\beta-2}(s)}{\mathcal{Y}_1(s)} \,.
\]
Therefore, the chain of the two inequalities becomes $\mathcal{Y}_{n+\beta-2}(s)/\mathcal{Y}_1(s)\ge\bigl(\mathcal{Y}_n(s)/\mathcal{Y}_1(s)\bigr)^{1+\Lambda}$, which after multiplying by $\mathcal{Y}_1(s)$ and using $\mathcal{Y}_1(s)\le m_{01}:=\Mom_0(c_0)+\Mom_1(c_0)$ becomes the desired bound
\begin{equation}\label{eq:jensentail}
\mathcal{Y}_{n+\beta-2}(s)\ge \mathcal{Y}_1(s)^{-\Lambda}\mathcal{Y}_n(s)^{1+\Lambda}\ge m_{01}^{-\Lambda}\mathcal{Y}_n(s)^{1+\Lambda}.
\end{equation}
On the indicator $\1_{[z_\ast-z,z_\ast]}(x)$ of $\mathcal B_n$, we have $x\le z_\ast<1$, so $(1+x)^n\le(1+z_\ast)^n$. Substituting $w=x+z$ turns this indicator into $\1_{[z_\ast,z_\ast+z]}(w)=\1_{\{w\ge z_\ast\}}\,\1_{\{z\ge w-z_\ast\}}$; bounding $\1_{\{w\ge z_\ast\}}\le1$ (which extends the $w$-integral to $[0,\infty)$ and only enlarges the bound) and using $K(w,y,z)\le\varphi(z)(\check w^k+\check y^k)$ together with $\int_0^\infty\varphi(z)\1_{\{z\ge w-z_\ast\}}\,\d z\le\|\varphi\|_0$, we get
\begin{equation}\label{eq:Bbd}
\mathcal B_n(s)\le(1+z_\ast)^n B
\quad\text{with}\quad
B:=2\|\varphi\|_0\Mom_0(c_0)\Bigl(\Mom_0(c_0)+\sup_{s\in [0,t]}\Mom_k(c_s)\Bigr)<\infty,
\end{equation}
which is finite by Lemma~\ref{lem:propagation of moment for lower bdd kernel} uniformly in $n\geq 1$. By~\eqref{eq:gainbd}, \eqref{eq:jensentail} and $\mathcal{Y}_n(s)\ge(1+z_\ast)^n\delta$, we conclude
\[
	\frac{\mathcal D_n(s)}{\mathcal B_n(s)}\ge\frac{2^{-\beta}n(n-1)G\delta^{2+\Lambda}m_{01}^{-\Lambda}}{B}\,(1+z_\ast)^{n\Lambda}\xrightarrow[n\to\infty]{}\infty \,,
\]
because $n\Lambda=\tfrac{n}{n-1}(\beta-2)\ge\beta-2>0$ and the prefactor grows like $n^2$. Fix $N_0$ so that $\mathcal D_n(s)\ge2\mathcal B_n(s)$ for all $n\ge N_0$ and $s\in[0,t]$; then $\mathcal D_n-\mathcal B_n\ge\tfrac12\mathcal D_n$ and with~\eqref{eq:gainbd}, \eqref{eq:jensentail}, we obtain
\begin{equation}\label{eq:closedYn}
\mathcal{Y}_n(\tau)-\mathcal{Y}_n(\sigma)\ge A_n\int_\sigma^\tau \mathcal{Y}_n(s)^{1+\Lambda}\,\d s,\qquad A_n:=2^{-\beta-1}n(n-1)G\delta\,m_{01}^{-\Lambda}>0.
\end{equation}
Fix $n\ge N_0$. By~\eqref{eq:closedYn} with $\sigma=0$ and $\mathcal{Y}_n(0)\ge(1+z_\ast)^n\delta>0$, comparison with the ordinary differential equation $g'=A_ng^{1+\Lambda}$, $g(0)=\mathcal{Y}_n(0)$, whose solution blows up at $\mathcal{Y}_n(0)^{-\Lambda}/(A_n\Lambda)$, shows that $\mathcal{Y}_n$ becomes infinite no later than
\[T_n:=\frac{\mathcal{Y}_n(0)^{-\Lambda}}{A_n\Lambda}\le\frac{2^{\beta+1}m_{01}^{\Lambda}\delta^{-1-\Lambda}}{(\beta-2)G}\,\frac{(1+z_\ast)^{-n\Lambda}}{n}\xrightarrow[n\to\infty]{}0,\]
since $A_n\Lambda=2^{-\beta-1}(\beta-2)G\delta\,m_{01}^{-\Lambda}n$ and $(1+z_\ast)^{-n\Lambda}\to(1+z_\ast)^{-(\beta-2)}$. Choosing $n\ge N_0$ with $T_n<t$ gives $\mathcal{Y}_n(\tau)=+\infty$ for some $\tau<t<T$. Now as $(1+x)^n\le 2^n x^n$ for $x\ge1$, we have \[\mathcal{Y}_n(\tau) \le  2^n \int_{1}^{\infty} x^n c_\tau(x) \d x ,\] this forces $\Mom_n(c_\tau)=+\infty$, contradicting the finiteness of all moments from Lemma~\ref{lem:propagation of moment for lower bdd kernel}. Hence no such solution exists, that is, $T_{gel}=0$.
\end{proof}

\appendix
\section{Reformulation of Assumption~\ref{ass:existence} from Remark \ref{rmk:example} }
\begin{proposition}\label{prop: example} Suppose the assumption \eqref{eq:generalbdd} holds. If $K$ satisfies \eqref{ass:truncation}, then \eqref{eq:alpha bound K}  holds.
\end{proposition}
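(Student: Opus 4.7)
The plan is to split the region $\{x \ge z \ge 0\}$ into three subregions depending on the size of $x-z$ relative to $1$ and the ratio $z/x$, and to verify the desired bound~\eqref{eq:alpha bound K} separately in each.

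Before the case analysis, I would first record the preliminary simplification
\[
\tfrac{1}{2}\bigl(\check{x}^\mu \check{y}^\nu + \check{x}^\nu \check{y}^\mu\bigr) \le \check{x}^\lambda \check{y}^\lambda,
\]
which is immediate from $\mu,\nu \le \lambda$ and $\check{x},\check{y} \ge 1$. Combined with~\eqref{eq:generalbdd}, this yields the clean global bound
\[
K(x,y,z) \le \hat{x}^{-\alpha}\hat{y}^{-\alpha}\check{x}^\lambda\check{y}^\lambda\,\varphi(z).
\]
Also note $C_\alpha = \Omega^{-\alpha} \ge 1$ and $\hat{x}^{-\alpha} \le \hat{x}^{-2\alpha}$ (since $\hat{x}\le 1$), which will be used repeatedly.

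\emph{Case 1: $x - z \ge 1$.} Then $\widehat{(x-z)}^{-\alpha} = 1$, and the clean bound above already gives~\eqref{eq:alpha bound K} upon absorbing $\hat{x}^{-\alpha} \le C_\alpha\hat{x}^{-2\alpha}$.

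\emph{Case 2: $x-z < 1$ and $1 - z/x > \Omega$.} Here $x-z > \Omega\, x$, so
\[
(x-z)^{-\alpha} < \Omega^{-\alpha} x^{-\alpha} = C_\alpha\, x^{-\alpha} \le C_\alpha\, \hat{x}^{-\alpha}.
\]
Multiplying by the clean bound on $K$ and using $x^{-\alpha}\hat{x}^{-\alpha} \le \hat{x}^{-2\alpha}$ produces~\eqref{eq:alpha bound K}.

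\emph{Case 3: $x-z \le 1$ and $1 - z/x \le \Omega$.} This is precisely the regime in which the hypothesis~\eqref{ass:truncation} is assumed, and the algebraic identity
\[
(x-z)^{-\alpha}\Bigl(1-\tfrac{z}{x}\Bigr)^{\alpha} = x^{-\alpha}
\]
converts the factor $(1 - z/x)^\alpha$ in~\eqref{ass:truncation} exactly into the $x^{-\alpha}$ needed. Combined with $\hat{x}^{-\alpha}x^{-\alpha} \le \hat{x}^{-2\alpha}$ and $1 \le C_\alpha$, this again yields~\eqref{eq:alpha bound K}.

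The argument is essentially bookkeeping once the three regions are identified. The only step with any content is the middle regime in Case~3, where the singular factor $(x-z)^{-\alpha}$ is controlled by matching it with $(1 - z/x)^\alpha$ from~\eqref{ass:truncation}; the choice of the threshold $\Omega$ in~\eqref{ass:truncation} precisely ensures that the complementary region (Case~2) can still be handled by the cruder bound~\eqref{eq:generalbdd} via the estimate $x-z \ge \Omega x$. No obstacle is expected; the only thing to watch is that the cases cover all configurations with $x \ge z \ge 0$, which they manifestly do.
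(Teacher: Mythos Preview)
Your proof is correct and follows essentially the same approach as the paper: the same three-region decomposition (according to whether $x-z\ge 1$, and otherwise whether $1-z/x$ lies above or below $\Omega$), with the same estimates in each case. The only differences are cosmetic---you state the preliminary reduction $\tfrac12(\check{x}^\mu\check{y}^\nu+\check{x}^\nu\check{y}^\mu)\le\check{x}^\lambda\check{y}^\lambda$ explicitly and swap the order of the last two cases.
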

\begin{proof} Given $x \ge z \ge 0$, $y \ge 0$, for $x-z >1$ we have  $x > 1$, so that by Assumption  \ref{ass:global}
\[K(x,y,z)\le \hat{y}^{-\alpha} x^{\lambda} \check{y}^{\lambda}\varphi(z).\]
For $x-z < 1$ and $1-\frac{z}{x}\le \Omega$, by Equation \eqref{ass:truncation}, we have
\[(x-z)^{-\alpha} K(x,y,z) \le   x^{-\alpha} \hat{x}^{-\alpha}   \hat{y}^{-\alpha} \check{x}^{\lambda}\check{y}^{\lambda} \varphi(z) \le   \hat{x}^{-2\alpha}\hat{y}^{-\alpha}   \check{x}^{\lambda}\check{y}^{\lambda} \varphi(z) . \]
For $x-z <1$ and $1-\frac{z}{x} > \Omega$, we have
\[(x-z)^{-\alpha} K(x,y,z) \le \biggl(1-\frac{z}{x}\biggr)^{-\alpha} \hat{x}^{-2 \alpha}\hat{y}^{-\alpha} \check{x}^\lambda  \check{ y}^\lambda  \varphi(z)\le \Omega^{-\alpha} \hat{x}^{-2 \alpha}\hat{y}^{-\alpha} \check{x}^\lambda  \check{ y}^\lambda  \varphi(z). \]
Therefore, in all cases we have the estimate  \eqref{eq:alpha bound K} with $C_\alpha = \Omega ^{-\alpha}$.
\end{proof}

\bibliographystyle{abbrv}
\bibliography{bib.bib}
\end{document}